\newtheorem{theorem}{Theorem}[section]
\newtheorem*{theorem*}{Theorem}
\newtheorem{corollary}[theorem]{Corollary}
\newtheorem{lemma}[theorem]{Lemma}
\newtheorem{proposition}[theorem]{Proposition}
\theoremstyle{remark}
  \newtheorem{remark}[theorem]{Remark}}
\theoremstyle{definition}
  \newtheorem{definition}[theorem]{Definition}
\def\N{\mathbb{N}}
\def\Z{\mathbb{Z}}
\def\P{\mathbb{P}}
\def\C{\mathbb{C}}
\def\II{I_\mathrm{hol}}
\newcommand{\CC}[0]{\ensuremath{\mathbb{C}}}
\newcommand{\ZZ}[0]{\ensuremath{\mathbb{Z}}}
\newcommand{\GA}[0]{\ensuremath{\mathbb{G}_{\mathrm{a}}}}
\newcommand{\RR}[0]{\ensuremath{\mathcal{R}}}
\newcommand{\QQ}[0]{\ensuremath{\mathbb{Q}}}
\newcommand{\TT}[0]{\ensuremath{\mathrm{T}}}
\newcommand{\KK}[0]{\ensuremath{\mathbb{C}}}
\newcommand{\OO}[0]{\ensuremath{\mathcal{O}}}
\newcommand{\Span}[0]{\ensuremath{\operatorname{Span}}}
\newcommand{\spec}[0]{\ensuremath{\operatorname{Spec}}}
\newcommand{\Aut}[0]{\ensuremath{\operatorname{Aut}}}
\newcommand{\spann}[0]{\ensuremath{\operatorname{span}}}
\newcommand{\rank}[0]{\ensuremath{\operatorname{rank}}}
\newcommand{\relint}[0]{\ensuremath{\operatorname{rel.int}}}
\newcommand{\VFalg}[0]{\ensuremath{\operatorname{VF}_{\mathrm{alg}}}}
\newcommand{\Liealg}[0]{\ensuremath{\operatorname{Lie}_{\mathrm{alg}}}}
\newcommand{\Xreg}[0]{\ensuremath{X^{\mathrm{reg}}}}
\newcommand{\Xsing}[0]{\ensuremath{X^{\mathrm{sing}}}}
\newcommand{\VFhol}[0]{\ensuremath{\operatorname{VF}_{\mathrm{hol}}}}
\newcommand{\Liehol}[0]{\ensuremath{\operatorname{Lie}_{\mathrm{hol}}}}
\begin{document}

\title{The algebraic density property for affine toric varieties}

\author{Frank Kutzschebauch}
\address{Mathematisches Institut, Universit\"at Bern, Sidlerstrasse 5,
  CH-3012 Bern, Switzerland.}%
\email{frank.kutzschebauch@math.unibe.ch}

\author{Matthias Leuenberger}
\address{Mathematisches Institut, Universit\"at Bern, Sidlerstrasse 5,
  CH-3012 Bern, Switzerland.}%
\email{matthias.leuenberger@math.unibe.ch}

\author{Alvaro Liendo} %
\address{Instituto de Matem\'atica y F\'\i sica, Universidad de Talca,
  Casilla 721, Talca, Chile.}%
\email{aliendo@inst-mat.utalca.cl}

\date{\today}

\thanks{{\it 2000 Mathematics Subject
    Classification}:  32M05; 32M25; 14M25.\\
  \mbox{\hspace{11pt}}{\it Key words}: Density property, affine toric
  varieties, locally nilpotent derivations, holomorphic automorphisms,
  Lie algebras.\\
  \mbox{\hspace{11pt}}The first and second authors were partially
  supported by Schweizerischer Nationalfond Grant 200021-140235/1 and
  the third author was supported by Fondecyt project 11121151}

\begin{abstract}
  In this paper we generalize the algebraic density property to not
  necessarily smooth affine varieties relative to some closed
  subvariety containing the singular locus. This property implies the
  remarkable approximation results for holomorphic automorphisms of
  the Anders\'en-Lempert theory. We show that an affine toric variety
  $X$ satisfies this algebraic density property relative to a closed
  $\TT$-invariant subvariety $Y$ if and only if $X\setminus Y \neq
  \TT$. For toric surfaces we are able to classify those which posses
  a strong version of the algebraic density property (relative to the
  singular locus). The main ingredient in this classification is our
  proof of an equivariant version of Brunella's famous classification
  of complete algebraic vector fields in the affine plane.
\end{abstract}

\maketitle

\section{Introduction}

A remarkable property of the Euclidean space of dimension at least
two, that to a great extent compensates for the lack of partition of
unity for holomorphic automorphisms, was discovered by Anders\'en and
Lempert in early 1990's \cite{An90,AnLe92}, see also the work by
Forstneri\v{c} and Rosay \cite{FoRo93}. Since then, the theory of
Stein manifolds with very large holomorphic automorphism group is
called Anders\'en-Lempert theory.

The property was formalized by Varolin who named it the density
property (DP). A Stein manifold $X$ has the DP if the Lie algebra
generated by complete holomorphic vector fields is dense (in the
compact-open topology) in the space of all holomorphic vector fields
on $X$. Recall that a vector field is called complete if its flow
exits for all complex time and all initial conditions.

The DP allows to construct (global) automorphisms of $X$ with
prescribed local properties. More precisely, any local phase flow on a
Runge domain in $X$ can be approximated by (global)
automorphisms. This has remarkable applications for geometric
questions in complex analysis, we refer the reader to survey articles
\cite{Ro99,KK11,Ku14} and the recent book \cite{Forst11}. For smooth
affine algebraic varieties, the algebraic density property (ADP) was
also introduced by Varolin. The ADP implies the DP, therefore it is
commonly used as a tool to prove the DP.

In this paper we generalize the ADP to not necessarily smooth affine
varieties relative to some closed subvariety containing the singular
locus as follows: Let $X$ be an affine algebraic variety and let
$\Xsing$ be the singular locus. We also let $Y\subseteq X$ be an
algebraic subvariety of $X$ containing $\Xsing$ and let
$I=I(Y)\subseteq \KK[X]$ be the ideal of $Y$.  Let $\VFalg(X,Y)$ be
the $\CC[X]$-module of vector fields vanishing in $Y$, i.e.,
$\VFalg(X,Y)=\{\partial \mid \partial(\CC[X])\subseteq I\}$. Let
$\Liealg(X,Y)$ be the Lie algebra generated by all the complete vector
fields in $\VFalg(X,Y)$.

\begin{definition} \label{ADP} %
  We say that $X$ has the strong ADP relative to $Y$ if $\VFalg(X,Y) =
  \Liealg(X,Y)$. Furthermore, we say that $X$ has the ADP relative to
  $Y$ if there exists $\ell\geq 0$ such that $I^\ell\VFalg(X,Y)
  \subseteq \Liealg(X,Y)$. With this definition, the ADP relative to
  $Y$ with $\ell=0$ is just the strong ADP relative to $Y$. If we let
  $Y=\Xsing$ we simply say that $X$ has the strong ADP or the ADP,
  respectively.
\end{definition}

Except for the fact that we consider not necessarily smooth varieties,
the strong ADP is a version of Varolin's Definition~3.1 in
\cite{varolin01} of DP for the Lie subalgebra of vector fields
vanishing on $Y$. Whereas for $\ell>0$ our property is slightly weaker
than Varolin's definition since we generate the Lie subalgebra of
vector fields vanishing on $Y$ of order at least $\ell$ using complete
vector fields vanishing on $Y$ of possibly lower order than
$\ell$. Still this version of the ADP has the same remarkable
consequences as in Varolin version of ADP for the group of holomorphic
automorphisms of $X$ fixing $Y$ pointwise (see
Theorem~\ref{AL-Theorem}).

In this paper we investigate the ADP for toric varieties. Our first
main result is the following theorem (see Theorem~\ref{finalthm}).

\begin{theorem*} 
  Let $X$ be an affine toric variety of dimension at least two and let
  $Y$ be a $\TT$-invariant closed subvariety of $X$ containing
  $\Xsing$. Then $X$ has the ADP relative to $Y$ if and only if
  $X\setminus Y\neq \TT$.
\end{theorem*}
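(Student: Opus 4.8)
\emph{Plan of proof.} Write $X=X_\sigma$ for a strongly convex rational polyhedral cone $\sigma$ with dual $\sigma^\vee$, so that $\CC[X]=\bigoplus_{m\in\sigma^\vee\cap M}\CC\,\chi^m$ and $\TT=\spec\CC[M]$ is the big orbit. The hypothesis is combinatorial: since $Y$ is a union of orbit closures not containing $\TT$, one has $X\setminus Y\ne\TT$ if and only if some ray $\rho_0$ of $\sigma$, with primitive generator $p_0$, satisfies $D_{\rho_0}\not\subseteq Y$. The complete algebraic vector fields that toric geometry furnishes are: the semisimple fields $\partial_v\colon\chi^m\mapsto\langle m,v\rangle\chi^m$ ($v\in N$), whose flow is the $\TT$-action; the root fields $\partial_e$ attached to the Demazure roots $e$ of $\sigma$ (for a ray generator $p$ with $\langle e,p\rangle=-1$, $\partial_e$ sends $\chi^m$ to $\langle m,p\rangle\chi^{m+e}$), which are locally nilpotent derivations; and the products $h\,\partial_e$ for $h\in\ker\partial_e$, which are again locally nilpotent, hence complete.

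\smallskip
I would first treat the direction ``$X\setminus Y=\TT\Rightarrow$ no ADP relative to $Y$''. Here $Y$ is the toric boundary, so a complete $\partial\in\VFalg(X,Y)$ has a flow fixing $X\setminus\TT$ pointwise, hence restricts to a complete algebraic vector field on $\TT\cong(\CC^*)^n$; I would invoke the fact that such a field is volume-preserving, i.e.\ has zero divergence with respect to the invariant volume form $\omega$ on $\TT$. Since $\divi_\omega$ kills Lie brackets of $\omega$-divergence-free fields and the condition $\divi_\omega\eta=0$ is closed in the compact-open topology, every element of $\overline{\Liealg(X,Y)}$ restricts to an $\omega$-divergence-free field on $\TT$. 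But $\VFalg(X,Y)$, and hence $I^\ell\VFalg(X,Y)$ for every $\ell$, contains fields violating this: if $m$ lies in the relative interior of $\sigma^\vee$ and $\langle m,v\rangle\ne0$ then $\chi^m\partial_v\in\VFalg(X,Y)$ but $\divi_\omega(\chi^m\partial_v)|_\TT=\langle m,v\rangle\chi^m\ne0$. So no power of $I$ carries $\VFalg(X,Y)$ into $\Liealg(X,Y)$.

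\smallskip
For the main direction I would fix $\rho_0$ with $D_{\rho_0}\not\subseteq Y$. A lattice-point count (using $\dim X\ge2$, so that the facet $F_0=\sigma^\vee\cap p_0^\perp$ of $\sigma^\vee$ is positive-dimensional) shows that $\rho_0$ carries Demazure roots and that $F_0$ contains lattice points $N$ with $\chi^N\in I$; for any such $N$ and root $e$ of $\rho_0$, the field $\chi^N\partial_e$ is complete and lies in $\VFalg(X,Y)$, as does $h\,\chi^N\partial_e$ for every $h$ in its kernel. The engine is the identity $[h\delta_1,\delta_2]=h[\delta_1,\delta_2]-\delta_2(h)\delta_1$ for locally nilpotent complete $\delta_1$, $h\in\ker\delta_1$, complete $\delta_2$: when $\delta_2(h)\in\ker\delta_1$ it gives $h[\delta_1,\delta_2]\in\Liealg(X,Y)$, so that brackets of the available complete fields can themselves be multiplied by further functions. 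Iterating — taking brackets of root fields for different rays to reach new weights, and using the $\ker\delta$-module $\{h\delta\}$ attached to each complete $\delta$ obtained en route — I would build inside $\Liealg(X,Y)$ an increasing family of homogeneous fields $\chi^{m}\partial_v$, and finally argue that, after at most $\ell$ multiplications by elements of $I$, these generate the sub-$\CC[X]$-module $I^\ell\VFalg(X,Y)$.

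\smallskip
The hard part is this last step: organizing the bookkeeping so that simultaneously (i) every field invoked genuinely vanishes on $Y$ — the need to premultiply root fields by monomials of $I$ to achieve this (while $h\delta$ is complete only for $h\in\ker\delta$) is precisely what costs the factor $I^\ell$ and explains why one obtains the ADP rather than the strong ADP in general — and (ii) the family produced is cofinite in $\VFalg(X,Y)$ in the module sense, missing only a submodule annihilated by a power of $I$. Matching the semigroup combinatorics of $\sigma^\vee$ (which Demazure roots exist, which monomials of $I$ lie in the kernels of the various $\partial_e$, how iterated root-translates cover $\sigma^\vee\cap M$) to this cofiniteness requirement is the crux of the argument, and is where the hypotheses $\dim X\ge2$ and $D_{\rho_0}\not\subseteq Y$ are genuinely used.
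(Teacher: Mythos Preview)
Your argument for necessity ($X\setminus Y=\TT\Rightarrow$ no ADP) is correct and is exactly the paper's: complete fields in $\VFalg(X,Y)$ restrict to complete algebraic fields on $\TT$, these preserve the Haar form by Anders\'en, divergence-freeness passes to Lie brackets, and $I^\ell\VFalg(X,Y)$ contains fields with nonzero $\TT$-divergence for every $\ell$.

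For sufficiency, however, your plan has a real gap and misses the paper's two key simplifications. First, your inventory of complete toric fields omits the family the paper actually uses: the type~I fields $\partial_{e,p}=\chi^e\partial_{0,p}$ with $e\in\sigma^\vee_M$ and $\langle e,p\rangle=0$ (a kernel function times a semisimple field). For $e\in\relint(\sigma^\vee)\cap M$ such a field automatically vanishes on $X\setminus\TT\supseteq Y$, so lies in $\Liealg(X,Y)$ with no premultiplication needed. Second, your plan to bracket root fields for \emph{different} rays cannot get started inside $\Liealg(X,Y)$: if $D_{\rho'}\subseteq Y$ then a homogeneous $\chi^m$ lies in $\ker\partial_{e',\rho'}$ iff $\langle m,\rho'\rangle=0$, while $\chi^m\in I(Y)\subseteq I(D_{\rho'})$ forces $\langle m,\rho'\rangle>0$; so no $h\,\partial_{e',\rho'}$ with $h\in\ker\partial_{e',\rho'}$ lies in $\VFalg(X,Y)$. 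Your iterative scheme therefore has no root fields to feed on except those for the single good ray $\rho_0$, and the ``crux'' you identify is never resolved.

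The paper instead does everything with \emph{one} bracket and then invokes an abstract module criterion. Pick a root $e_1$ of $\rho_0$ with $\langle e_1,\rho\rangle>0$ for all $\rho\neq\rho_0$, so $\partial_{e_1,\rho_0}\in\Liealg(X,Y)$. For $e_4\in\relint(\sigma^\vee)\cap M$ choose $p_4\perp e_4$ with $\langle e_1,p_4\rangle\neq0$; then $\partial_{e_4,p_4}\in\Liealg(X,Y)$ by the previous paragraph, and $[\partial_{e_1,\rho_0},\partial_{e_4,p_4}]=\partial_{e_1+e_4,\,p}$ with $\langle e_1+e_4,p\rangle\neq0$. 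Together with the complete $\partial_{e,p}$ for $\langle e,p\rangle=0$ this places the entire submodule $L=\spann\{\partial_{e,p}:p\in N,\ e\in e_3+\sigma^\vee_M\}$ (for a fixed interior $e_3$) inside $\Liealg(X,Y)$. Since $L$ surjects onto $T_{\mathfrak e}X$ and $X$ is homogeneous relative to $Y$ (Proposition~\ref{rel-hom}), Theorem~\ref{thm} --- a relative Kaliman--Kutzschebauch criterion that moves $L$ around by $\Aut(X,Y)$ and applies the Nullstellensatz --- produces the factor $I^\ell$ automatically. The combinatorial bookkeeping you flag as the hard part is entirely absorbed into this criterion; no iteration is needed.
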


Recall that every smooth affine toric variety is isomorphic
$\CC^k\times (\CC^*)^{n-k}$. A special case of our theorem where
$X=\CC^n$ and $Y$ is the union of up to $n-1$ coordinate hyperplanes
has been already proven by Varolin \cite{varolin01}.

It is well known that every affine toric surface different from $\C^*\times\C$ or $\C^*\times\C^*$ is obtained as a
quotient of $\C^2$ by the action of a cyclic group. Let $d>e$ be
relatively prime positive integers. We denote by $V_{d,e}$ the toric
surface obtained as the quotient of $\C^2$ by the $\Z_d$-action
$\zeta\cdot(u,v) = (\zeta u, \zeta^e v)$, where $\zeta$ is a primitive
$d$-th root of unity. The following theorem is our second main result
(see Corollary~\ref{Z-ADP}).

\begin{theorem*}
  $V_{d,e}$ has the strong ADP if and only if $e$ divides $d+1$ and
  $e^2 \neq d + 1$.
\end{theorem*}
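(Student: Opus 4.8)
The plan is to transfer the question to the smooth cover $\pi\colon\C^2=\spec\C[u,v]\to V_{d,e}$, on which $\Z_d$ acts by $\zeta\cdot(u,v)=(\zeta u,\zeta^e v)$ and for which $\C[V_{d,e}]=\C[u,v]^{\Z_d}$. Since $\gcd(d,e)=1$ the action is free on $\C^2\setminus\{0\}$, so $V_{d,e}^{\mathrm{sing}}=\{\bar 0\}:=\pi(0)$ and $\pi$ is \'etale outside a codimension-two set; in particular, by Theorem~\ref{finalthm} the variety $V_{d,e}$ always has the ADP relative to $\{\bar 0\}$ (indeed $V_{d,e}\setminus\{\bar 0\}\neq\TT$), so only the optimal value $\ell=0$ is at issue. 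Every derivation of $\C[V_{d,e}]$ extends uniquely to a $\Z_d$-invariant derivation of $\C[u,v]$ --- extend over $V_{d,e}\setminus\{\bar 0\}$ by \'etaleness, then across $\bar 0$ by normality of $\C^2$ --- and conversely; and for $d\ge 2$ such a derivation $\delta$ has $\delta u$ of $\Z_d$-weight $1$ and $\delta v$ of weight $e$, neither a nonzero constant, so $\delta u,\delta v\in(u,v)$ and $\delta$ maps $\C[V_{d,e}]$ into $\mathfrak m_{\bar 0}$. Hence $\VFalg(V_{d,e},\{\bar 0\})=\Der(\C[V_{d,e}])\cong\Der(\C[u,v])^{\Z_d}$, and such a field is complete on $V_{d,e}$ exactly when its lift is complete on $\C^2$ (the lift fixes $0$). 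So the statement reduces to: the Lie algebra generated by the complete members of $\Der(\C[u,v])^{\Z_d}$ equals $\Der(\C[u,v])^{\Z_d}$ if and only if $e\mid d+1$ and $e^{2}\neq d+1$.

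I would then classify the complete $\Z_d$-invariant algebraic vector fields using the equivariant version of Brunella's theorem established earlier in the paper. Grading $\Der(\C[u,v])$ by $\Z_d$-weight (so $u,\partial_u$ have weights $1,-1$ and $v,\partial_v$ have weights $e,-e$ modulo $d$), $\Der(\C[u,v])^{\Z_d}$ is the weight-zero part, a finitely generated graded $\C[u,v]^{\Z_d}$-module whose $\partial_u$- and $\partial_v$-components consist of the $\chi\,\partial_u$ with $\chi$ of weight $1$, resp.\ the $\chi\,\partial_v$ with $\chi$ of weight $e$. The complete invariant fields include the semisimple fields $\lambda u\partial_u+\mu v\partial_v$; the reparametrized semisimple fields $v^{md}u\partial_u$, $u^{md}v\partial_v$ ($m\ge 1$), complete because $v,u$ are polynomial first integrals of $u\partial_u,v\partial_v$; the complete locally nilpotent derivations $v^{e'+md}\partial_u$ (with $ee'\equiv 1\bmod d$, $0<e'<d$, $m\ge 0$) and $u^{e+md}\partial_v$; and their images under $\Z_d$-equivariant algebraic automorphisms. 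The force of Brunella's equivariant classification is that, up to equivariant conjugacy, these --- together with the finitely many exceptional models in his list, handled by hand --- are all of them.

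For the ``if'' direction, assume $e\mid d+1$, say $d+1=ke$, and $e^{2}\neq d+1$ (so $k\neq e$). Set $\delta_1=v^{e'}\partial_u$, $\delta_2=u^{e}\partial_v$. I would compute iterated brackets of $\delta_1,\delta_2$, the semisimple fields $u\partial_u,v\partial_v$, and the reparametrizations $v^{md}u\partial_u,u^{md}v\partial_v$. A bracket of graded fields has the sum of the weights, and bracketing against $u\partial_u$ and $v\partial_v$ isolates the individual monomial coefficients of a fixed weight, so the content is which weights, and which module elements, can be reached. The key point is that when $e\mid d+1$ the available complete fields already include ones of the minimal weights in each component, so that a finite sequence of brackets produces every module generator $\chi\,\partial_u$ ($\chi$ of weight $1$) and $\chi\,\partial_v$ ($\chi$ of weight $e$), and then every element of $\Der(\C[u,v])^{\Z_d}$ --- the divisibility $e\mid d+1$ being exactly what lets the exponents descend to the minimal ones, so that one generates the whole module rather than only an $\mathfrak m_{\bar 0}$-primary submodule (which is all the general theorem provides). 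The condition $e^{2}\neq d+1$ is required to avoid a degenerate coincidence in this descent, treated next.

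For the ``only if'' direction one exhibits an obstruction, and this is where the classification is essential. If $e\nmid d+1$, then every complete invariant field has weight in a proper additive sub-semigroup of the weight set of $\Der(\C[u,v])^{\Z_d}$: the two families of root derivations contribute weights in two shifted cones, and $e\mid d+1$ is precisely the arithmetic condition under which those cones reach the generating weight of the $\partial_u$-part; when it fails, some module generator is not a Lie expression in complete fields, so the strong ADP fails. If $e^{2}=d+1$, every necessary weight is realized, but at one critical weight the corresponding weight space is two-dimensional while the brackets of complete fields span only a line in it; this is exactly the failure mechanism for the $A_2$-singularity $V_{3,2}$, and the general case is settled by the same direct bookkeeping. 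The two genuinely delicate points are therefore (a) invoking the equivariant Brunella theorem to rule out any further complete fields, and (b) the borderline locus $e^{2}=d+1$, where the obstruction is a rank drop in a single weight space rather than a missing weight.
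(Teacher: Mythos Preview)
Your overall strategy --- reduce to the $\Z_d$-equivariant problem on $\C^2$ via Proposition~\ref{surf-ADP}, invoke the equivariant Brunella classification, and analyse Lie brackets in the bidegree decomposition --- is exactly the paper's route. However, your ``only if'' argument misidentifies the obstruction. You assert that when $e\nmid d+1$ the complete invariant fields have bidegrees confined to a proper additive sub-semigroup, so that some weight is \emph{missing}. This is false: the complete field $u^iv^j(ju\,\partial_u-iv\,\partial_v)\in\mathrm{CVF}^{(i,j)}$ exists for every $(i,j)\in I$, so every bidegree is already hit by a complete field. The actual obstruction, in \emph{both} the $e\nmid d+1$ and the $e^2=d+1$ cases, is the rank-drop mechanism you describe only for the latter: at each bidegree $(i,j)$ in the finite set $J=\{(i,j)\in I\setminus\{(0,0)\}:i<e,\ j<e'\}$, the Lie algebra generated by complete fields meets the $2$-dimensional space $\mathrm{VF}^{(i,j)}$ only in the line $\mathrm{CVF}^{(i,j)}$, with the sole exception $(i,j)=(e-1,e'-1)$ when $e\neq e'$, where $[v^{e'}\partial_u,\,u^e\partial_v]$ supplies the missing direction. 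The paper pins this down via Lemma~\ref{lembrackets}: brackets of two $\mathrm{CVF}$'s stay $\mathrm{CVF}$; a bracket $[\mathrm{CVF}^{(i,j)},\mathrm{LND}_u^k]$ lands at second coordinate $j+ke'\geq e'$, and $[\mathrm{CVF}^{(i,j)},\mathrm{LND}_v^{k'}]$ at first coordinate $i+k'e\geq e$, hence both outside $J$; and $[\mathrm{LND}_u^k,\mathrm{LND}_v^{k'}]$ lands at $(k'e-1,ke'-1)$, which lies in $J$ only for $k=k'=1$ and is non-$\mathrm{CVF}$ there iff $e\neq e'$. The arithmetic is then $|J|\leq 1\Leftrightarrow ee'=d+1\Leftrightarrow e\mid d+1$, and the residual bad case $|J|=1$ with $e=e'$ is exactly $e^2=d+1$. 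So the two failure modes you distinguish are in fact the same phenomenon at different cardinalities of $J$.

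There is a second, smaller gap: your treatment of the non-homogeneous complete fields (types~(2b) and~(3) in Theorem~\ref{thmlist}) as ``handled by hand'' hides the content needed to close the ``only if'' direction. One must show that these fields contribute nothing new at bidegrees in $J$. The paper does this in two steps: first, a direct check that the homogeneous components of types~(2b) and~(3) at bidegrees in $J$ vanish (so these fields already lie in the candidate subspace $S$); second, the equivariant Jung--van~der~Kulk decomposition shows every equivariant automorphism is a composition of linear maps and time-one maps of LNDs already in $S$, so conjugation by equivariant automorphisms preserves $\mathrm{Lie}(S)$. Without both steps you cannot rule out that some exotic complete field, or a conjugate of a basic one, fills a missing direction in $\mathrm{VF}^{(i,j)}$ for $(i,j)\in J$.
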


Furthermore, for every affine toric surface our methods allow to
determine the values of $\ell$ from Definition~\ref{ADP} for which
$I^\ell\VFalg(X,\Xsing) \subseteq \Liealg(X,\Xsing)$. The main
ingredient in the proof of this theorem is an equivariant version of
Brunella's famous classification of complete algebraic vector fields
in the affine plane (see \cite{Br3}) or, equivalently, classification
of complete algebraic vector fields on affine toric surfaces (see
Theorem~\ref{thmlist}). This result might be of independent interest.

\section{Vector fields and the algebraic density property}

In this section we prove a general method for establishing the ADP
that we later will use to show the ADP for toric varieties.

\begin{definition}
  Let $X$ be an affine algebraic variety and $Y$ be a subvariety
  containing $\Xsing$.
  \begin{enumerate}[$(i)$]

  \item Let $\Aut(X,Y)$ be the subgroup of automorphism of $X$
    stabilizing $Y$. We say that $X$ is homogeneous with respect to
    $Y$ if $\Aut(X,Y)$ acts transitively on $X\setminus Y$.

  \item We also let $x_0\in \Xreg$. A finite subset $M$ of the tangent
    space $T_{x_0}X$ is called a generating set if the image of $M$
    under the action of the isotropy group of $x_0$ in $\Aut(X,Y)$
    generate the whole tangent space $T_{x_0}X$.
  \end{enumerate}
\end{definition}

The following is our main tool to establish the ADP for toric
varieties. It is a generalization of \cite[Theorem~1]{KaKu08}.

\begin{theorem}\label{thm}
  Let $X$ be an algebraic variety homogeneous with respect to some
  subvariety $Y\supseteq \Xsing$. Let also $L$ be a finitely generated
  submodule of the $\KK[X]$-module $\VFalg(X,Y)$ of vector fields
  vanishing on $Y$. Assume that $L\subseteq\Liealg(X,Y)$. If the fiber
  of $L$ over some $x_0\in X\setminus Y$ contains a generating set,
  then $X$ has the ADP relative to $Y$.
\end{theorem}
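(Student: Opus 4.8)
The plan is to exploit the transitivity of $\Aut(X,Y)$ on $X \setminus Y$ together with a "spreading out" argument: starting from a generating set in one fiber of $L$, one uses automorphisms (and their differentials) to fill up the whole module $\VFalg(X,Y)$ up to a power of the ideal $I=I(Y)$. First I would recall the key mechanism behind such results (as in \cite{KaKu08}): if $\partial \in \Liealg(X,Y)$ is complete with flow $\varphi_t$, and $\nu \in \Liealg(X,Y)$, then $\frac{d}{dt}\big|_{t=0}(\varphi_t)_*\nu = [\partial,\nu] \in \Liealg(X,Y)$; more usefully, for a \emph{semisimple} or \emph{locally nilpotent} complete field $\partial$ one gets that the whole orbit $(\varphi_t)_*\nu$, and hence the span of its "Taylor coefficients" in $t$, lies in the closure — but since we are working algebraically and want an exact (not approximate) statement, I would instead use the algebraic fact that $\Liealg(X,Y)$ is stable under $(\varphi_t)_*$ for $\partial$ complete, and that finite-dimensional pieces behave polynomially in $t$, so that $(\varphi_t)_*L \subseteq \Liealg(X,Y)$ for all $t$. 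Applying this to a suitable finite collection of complete fields whose flows generate (an open, hence by irreducibility dense, subset of) $\Aut(X,Y)^\circ$, one concludes that for every $g$ in (the identity component of) $\Aut(X,Y)$ one has $g_* L \subseteq \Liealg(X,Y)$.

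Next I would turn the pointwise/fiberwise hypothesis into a global one. Let $x_0 \in X\setminus Y$ and let $M \subseteq L|_{x_0} \subseteq T_{x_0}X$ be the given generating set; by definition the isotropy subgroup $H$ of $x_0$ in $\Aut(X,Y)$ satisfies $\Span\{ dh(m) : h \in H,\ m \in M\} = T_{x_0}X$. Combining this with homogeneity, for \emph{every} point $x \in X\setminus Y$ the vectors $\{(g_*\sigma)|_x : g \in \Aut(X,Y),\ \sigma \in L\}$ span $T_x X$: indeed choose $g$ with $g(x_0)=x$, precompose with elements of $H$ to hit all of $T_{x_0}X$ from $M$, then push forward by $g$. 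Let $L'$ be the $\KK[X]$-submodule of $\VFalg(X,Y)$ generated by finitely many translates $g_{1*}L,\dots,g_{N*}L$ chosen so that already $\{ (g_{i*}\sigma_j)|_{x_0}\}$ spans $T_{x_0}X$ (possible since $M$ is finite and the span is finite-dimensional); by the previous paragraph $L' \subseteq \Liealg(X,Y)$, $L'$ is finitely generated, and $L'|_x = T_x X$ for $x$ in a neighborhood of $x_0$ — in fact, using homogeneity once more, $L'$ can be arranged so that $L'|_x$ spans $T_xX$ for \emph{all} $x\in X\setminus Y$ (replace $L'$ by the sum of enough $\Aut$-translates, still finitely generated since $X$ is Noetherian).

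Finally I would compare $L'$ with $\VFalg(X,Y)$. Both are finitely generated $\KK[X]$-modules with $L' \subseteq \VFalg(X,Y)$, and the quotient $Q = \VFalg(X,Y)/L'$ is a finitely generated module whose fibers vanish over every point of $X\setminus Y$, i.e. $\supp Q \subseteq Y$, so $Q$ is annihilated by some power $I^\ell$ of $I=I(Y)$ (radical plus Noetherian). Hence $I^\ell \VFalg(X,Y) \subseteq L' \subseteq \Liealg(X,Y)$, which is exactly the ADP relative to $Y$.

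I expect the main obstacle to be the first paragraph — making rigorous, in the \emph{algebraic} category, the passage from "complete vector fields lie in $\Liealg(X,Y)$" to "$g_*L \subseteq \Liealg(X,Y)$ for all $g$ in the group generated by their flows," since the flows are a priori only holomorphic. The resolution (following \cite{KaKu08}) is that it suffices to push forward by the \emph{algebraic} automorphisms that arise: for a complete \emph{algebraic} field $\partial$ that is locally nilpotent, $\varphi_t$ is a one-parameter algebraic (unipotent) subgroup and $(\varphi_t)_*\nu$ is polynomial in $t$ with coefficients the iterated brackets $\operatorname{ad}_\partial^k(\nu)/k! \in \Liealg(X,Y)$; for a semisimple complete field one likewise stays within algebraic automorphisms; and the relevant generating sets and translates can be produced using only such fields (this is where toric structure later supplies enough $\GM$- and $\GA$-actions). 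One must also check that the finitely many translates can be chosen to keep $L'$ finitely generated and with full rank on all of $X\setminus Y$ — this is where Noetherianity and irreducibility of $X$ are used.
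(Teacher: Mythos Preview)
Your overall architecture---spread the generating set around by automorphisms until the translates span $T_xX$ at every point of $X\setminus Y$, then kill the quotient module by a power of $I$---is exactly the paper's. The final paragraph of your plan (annihilator, radical, Noetherian) matches the paper's conclusion essentially verbatim.

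Where you diverge is in your first and last paragraphs, and the divergence is an unnecessary complication rather than a gap. You treat the assertion ``$g_*L\subseteq\Liealg(X,Y)$ for $g\in\Aut(X,Y)$'' as the main obstacle and propose to establish it via flows, Taylor expansions $(\varphi_t)_*\nu=\sum \tfrac{t^k}{k!}\operatorname{ad}_\partial^k(\nu)$, and a restriction to the subgroup generated by algebraic one-parameter groups. None of this is needed. The paper simply observes: pushforward by \emph{any} $\beta\in\Aut(X,Y)$ sends complete vector fields to complete vector fields, sends $\VFalg(X,Y)$ to itself (since $\beta$ stabilizes $Y$), and respects Lie brackets; hence $\beta_*(\Liealg(X,Y))=\Liealg(X,Y)$, and in particular $\beta_*L\subseteq\Liealg(X,Y)$. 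This one-line argument works for the full group $\Aut(X,Y)$, not just an identity component, which matters because the homogeneity hypothesis is stated for $\Aut(X,Y)$ and says nothing about any connected subgroup. Your route through flows also drags in delicate issues (ind-group structure of $\Aut(X,Y)$, algebraicity of flows of arbitrary complete fields) that the direct argument sidesteps entirely.

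A minor point: your ``Noetherian'' justification for why finitely many translates $g_{i*}L$ suffice to achieve full rank on all of $X\setminus Y$ is a bit brisk. The paper makes this explicit by an induction on the dimension of the bad locus $A_1=\{x\in X\setminus Y:\operatorname{span}(\partial_i[x])\neq T_xX\}$: pick a point in each irreducible component, move $x_0$ there by homogeneity, enlarge $L$, and observe the new bad locus has strictly smaller dimension. Your Noetherian descending-chain instinct is in the same spirit, but the dimension induction is what actually terminates the process.
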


\begin{proof}
  Let $\{\partial_i\}$ be a finite set of vector fields in $L$ such
  that $\{\partial_i[x_0]\}$ is a generating set. Let now
  $\{\beta_j\}\subseteq \Aut(X,Y)$ be a finite collection of
  automorphisms fixing $x_0$ such that
  $\{\beta_j^*(\partial_i)[x_0]\}$ span the tangent space at $x_0$.
  Since change of coordinates does not change completeness of a vector
  field, for $\beta\in \Aut(X,Y)$, the finitely generated module
  $L_\beta=\beta^*(L)$ is again contained in $\Liealg(X,Y)$. By
  replacing $L$ with $\bigoplus_{j} L_{\beta_j}$, we can assume that
  $\{\partial_i[x_0]\}$ span the tangent space at $x_0$.

  We let $A_1=\{x\in X\setminus Y\mid \spann(\partial_i[x])\neq
  T_{x}X\}$.  We also let $A_1=\bigcup A_1^j$ be the decomposition of
  $A_1$ in irreducible components and we pick $x_j\in A_1^j$. Since
  $X$ is homogeneous with respect to $Y$, we can choose $\alpha_j\in
  \Aut(X,Y)$ sending $x_0$ to $x_j$. We also put
  $\alpha_0=\operatorname{Id}$. Let now
  $$A_2=\big\{x\in X\setminus
  Y\mid \spann\{\alpha_j^*(\partial_i)[x]\mid \forall i,j\}\neq
  T_{x}X\big\}\,.$$ %
  By construction $\dim A_1>\dim A_2$ and so we can proceed by
  induction on dimension to obtain a finite collection of
  automorphisms $\alpha_j\in \Aut(X,Y)$ such that the collection
  $\{\alpha_j^*(\partial_i)[x]\}$ span the tangent space at every
  point $x\in X\setminus Y$.

  We let $E=\bigoplus_{j} L_{\alpha_j}$. With the same argument as
  before, $E$ is a finitely generated $\CC[X]$-submodule of
  $\VFalg(X,Y)$ contained in $\Liealg(X,Y)$. By construction, we have
  that the fiber of $\widetilde{E}:=\VFalg(X,Y)\big/E$ at every $x\in
  X\setminus Y$ is trivial. Hence, the support of $\widetilde{E}$ is
  contained in $Y$.

  We define
  $$J=\operatorname{Ann}_{\CC[X]}\widetilde{E}:=\left\{f\in \CC[X]\mid
    fa=0\mbox{ for all } a\in \widetilde{E}\right\}\,.$$ %
  By construction $J\widetilde{E}=0$. This yields
  $J\VFalg(X,Y)\subseteq E$. Furthermore, by \cite[Ch. II Ex
  5.6]{Har77} we have that $V(J)\subseteq Y$. Recall that $I$ is the
  ideal of $Y$ and let $J'=J\cap I$ so that $V(J')=Y$. Let now $a_i$
  be a finite set of generators of $i$. Since
  $\operatorname{rad}(J')=I$, we have that there exists $\ell_i$ such
  that $a_i^{\ell_i}\in J$ for all $i$. Letting 
  $\ell=1+\sum_i(\ell_i-1)$ we obtain
  $$I^\ell\subseteq J'\subseteq J\quad\mbox{and so}\quad
  I^\ell\VFalg(X,Y)\subseteq J\VFalg(X,Y)\subseteq E\subseteq
  \Liealg(X,Y)\,.$$ %
  Hence the theorem follows.
\end{proof}

\section{The algebraic density property for affine toric varieties}

We first recall the basic facts from toric geometry that will be
needed in this section. They can be found in any text about toric
geometry such as \cite{Fu93,Oda88,CLS}.

Let $M$ and $N$ be mutually dual lattices of rank $n$ with duality
pairing $M\times N\rightarrow \ZZ$, where $(m,p)\mapsto \langle
m,p\rangle=p(m)$. We also let $M_\QQ=M\otimes_\ZZ \QQ$ and
$N_\QQ=N\otimes_\ZZ \QQ$. Letting $\TT$ be the algebraic torus
$\TT=\spec\CC[M]=N\otimes_\ZZ\CC^*$. A toric variety is a normal
variety endowed with an effective action of $\TT$ having an open
orbit. Since the $\TT$-action is effective, the open orbit is equal to
$\TT$.

It is well known that affine toric varieties can be described by means
of strongly convex polyhedral cones (pointed cones) in the vector
space $N_\QQ$. Indeed, let $\sigma$ be a pointed cone in $N_\QQ$, then
$X_\sigma=\spec \CC[\sigma^\vee\cap M]$ is an affine toric variety and
every affine toric variety arises this way. Here $\CC[\sigma^\vee\cap
M]$ is the semigroup algebra $\CC[\sigma^\vee\cap M]=\bigoplus_{m\in
  \sigma^\vee\cap M}\CC\chi^m$. In the following, we denote
$\sigma^\vee\cap M$ by $\sigma^\vee_ M$.

There is a one to one correspondence between the faces $\tau$ of the
cone $\sigma$ and the orbits $\OO(\tau)$ of the $\TT$-action on
$X_\sigma$ (usually called the Orbit-Cone correspondence). The
dimension of an orbit is given by $\dim \OO(\tau)=\rank N-\dim \tau$
and its closure is given by $\overline{\OO(\tau)}=\bigcup_\delta
\OO(\delta)$ where $\delta$ runs over all faces of $\sigma$ containing
$\tau$. The ideal $I(\tau)$ of an orbit closure $\overline{\OO(\tau)}$
is given by
$$I(\tau)=\bigoplus_{m\in \sigma^\vee_M\setminus
  \tau^\bot}\CC\chi^m\,$$ %
where $\tau^\bot\subseteq M_\QQ$ is the orthogonal of
$\tau$. Furthermore, the ideal of $X\setminus \TT$ is
$$I(X\setminus \TT)=\bigoplus_{m\in (\relint\sigma^\vee)\cap
  M}\CC\chi^m\,,$$ %
where $\relint$ denotes the relative interior.

As usual, we identify a ray $\rho\subseteq\sigma$ with its primitive
vector. The set of all the rays of $\sigma$ is denoted by
$\sigma(1)$. A cone $\sigma$ is called smooth if $\sigma(1)$ is part
of a basis of the lattice $N$. Let $\tau\subseteq \sigma$ be any
face. The orbit $\OO(\tau)$ is contained in $\Xreg$ if and only if
$\tau$ is smooth.

Let now $e\in M$ and $p\in N$. The linear map
$$\partial_{e,p}:\CC[M]\rightarrow\CC[M],\quad \chi^m\mapsto
\langle m,p\rangle\cdot\chi^{m+e}$$ is a homogeneous derivation of the
algebra $\CC[M]$ and so it is a homogeneous vector field on
$\TT=\spec\CC[M]$. By the exponential map, the tangent
space of $\TT=N\otimes_\ZZ\CC^*$ at the identity $\mathfrak{e}\in \TT$
is isomorphic to $N\otimes_\ZZ\CC$ and the evaluation of the vector
field $\partial_{e,p}$ at the smooth point $\mathfrak{e}$ is
$\partial_{e,p}[\mathfrak{e}]=p$.

Let $\sigma\subseteq N_\QQ$ be a pointed cone. The following
proposition gives a description of all the homogeneous vector fields
on $X_\sigma$. The first statement of the following result can be
found in \cite{Dem70}. For the convenience of the reader we provide a
short argument.

\begin{proposition}
  The homogeneous vector field $\partial_{e,p}$ on $\TT$ extends to a
  homogeneous vector field in $X_\sigma$ if and only if
  \begin{enumerate}[Type I:]
  \item $e\in\sigma^\vee_M$, or
  \item There exists $\rho_e\in \sigma(1)$ such that 
    \begin{enumerate}
    \item $p\in \ZZ \rho_{e}$,
    \item $\langle e,\rho_e\rangle=-1$, and
    \item $\langle e,\rho\rangle\geq 0$ for all
      $\rho\in\sigma(1)\setminus \{\rho_e\}$.
    \end{enumerate}
  \end{enumerate}
  Furthermore, $\partial_{e,p}$ is locally nilpotent if and only if it
  is of type II, and $\partial_{e,p}$ is semisimple if and only if it
  is of type I and $e=0$.
\end{proposition}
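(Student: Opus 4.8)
The plan is to reduce extendability to an explicit condition on the monomials $\chi^m$ and then read the two types off from the defining inequalities of $\sigma^\vee$. Concretely, a homogeneous derivation of $\CC[M]$ restricts to a derivation of $\CC[\sigma^\vee_M]$ --- equivalently, extends to a vector field on $X_\sigma$ --- exactly when it maps $\CC[\sigma^\vee_M]$ into itself, i.e.\ when for every $m\in\sigma^\vee_M$ one has $\langle m,p\rangle=0$ or $m+e\in\sigma^\vee_M$. Since $\sigma^\vee=\{m\in M_\QQ\mid\langle m,\rho\rangle\ge 0\ \text{for all}\ \rho\in\sigma(1)\}$ and every $m\in\sigma^\vee_M$ already satisfies $\langle m,\rho\rangle\ge 0$, the requirement $m+e\in\sigma^\vee$ is automatic at a ray $\rho$ whenever $\langle e,\rho\rangle\ge 0$, and is a genuine constraint only at rays with $\langle e,\rho\rangle\le-1$ (recall $\langle e,\rho\rangle\in\ZZ$). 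Hence if no ray has $\langle e,\rho\rangle<0$ then $e\in\sigma^\vee_M$, which is Type~I, and $\partial_{e,p}$ extends for every $p$. The Type~II direction is equally direct: if $\langle m,p\rangle\ne 0$ then $(a)$ forces $\langle m,\rho_e\rangle\ge 1$, so $\langle m+e,\rho_e\rangle\ge 0$ by $(b)$, while $\langle m+e,\rho\rangle\ge 0$ at the remaining rays by $(c)$; thus $m+e\in\sigma^\vee_M$ and $\partial_{e,p}$ extends. Note that Types~I and~II are mutually exclusive.

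For the converse, assume $\partial_{e,p}$ extends and is nonzero (so $p\ne 0$), and that $e\notin\sigma^\vee_M$; I must recover the data of Type~II. By the previous paragraph some ray $\rho$ has $\langle e,\rho\rangle\le-1$. Since $\sigma$ is pointed, $\sigma^\vee$ is full-dimensional, so the facet $\rho^\perp\cap\sigma^\vee$ of $\sigma^\vee$ spans the hyperplane $\rho^\perp$; for $m$ in it, $\langle m+e,\rho\rangle=\langle e,\rho\rangle<0$, so $m+e\notin\sigma^\vee$ and extendability forces $\langle m,p\rangle=0$. Thus the functional $\langle\,\cdot\,,p\rangle$ vanishes on a spanning set of $\rho^\perp$, whence $p\in\QQ\rho\cap N=\ZZ\rho$ by primitivity of $\rho$. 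If two distinct rays had negative pairing with $e$, then $p\in\ZZ\rho\cap\ZZ\rho'=\{0\}$, contradicting $p\ne 0$; so there is a \emph{unique} such ray $\rho_e$, giving at once $(a)$ $p\in\ZZ\rho_e$ and $(c)$ $\langle e,\rho\rangle\ge 0$ for $\rho\ne\rho_e$. It remains to prove $(b)$, namely $\langle e,\rho_e\rangle=-1$; this is the one step that needs a genuine argument, and I expect it to be the main obstacle. The idea is to produce $m\in\sigma^\vee_M$ with $\langle m,\rho_e\rangle=1$: since $\rho_e$ is primitive the map $\langle\,\cdot\,,\rho_e\rangle\colon M\to\ZZ$ is onto, so there is $m_1\in M$ with $\langle m_1,\rho_e\rangle=1$, and then $m_1+Nf\in\sigma^\vee_M$ for $N\gg 0$, where $f$ is a lattice point in the relative interior of $\rho_e^\perp\cap\sigma^\vee$ (so $\langle f,\rho_e\rangle=0$ and $\langle f,\rho\rangle>0$ at all other rays), while $\langle m_1+Nf,\rho_e\rangle=1$. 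For such an $m$ we have $\langle m,p\rangle\ne 0$, hence $m+e\in\sigma^\vee$, hence $0\le\langle m+e,\rho_e\rangle=1+\langle e,\rho_e\rangle$; combined with $\langle e,\rho_e\rangle\le-1$ this yields $(b)$. (The degenerate case $p=0$ gives the zero field.)

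For the locally nilpotent assertion, note $\partial_{e,p}^{\,k}(\chi^m)=\bigl(\prod_{j=0}^{k-1}\langle m+je,p\rangle\bigr)\chi^{m+ke}$, so $\partial_{e,p}$ is locally nilpotent iff for every $m\in\sigma^\vee_M$ the arithmetic progression $j\mapsto\langle m,p\rangle+j\langle e,p\rangle$ vanishes for some $j\ge 0$. If $\partial_{e,p}$ is of Type~II, then $p=c\rho_e$ with $c\in\ZZ\setminus\{0\}$ and $\langle e,p\rangle=-c$, so the progression is $c(\langle m,\rho_e\rangle-j)$ and vanishes at the non-negative integer $j=\langle m,\rho_e\rangle$; hence Type~II fields are locally nilpotent. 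If instead $\partial_{e,p}$ is of Type~I with $p\ne 0$, then $e\in\sigma^\vee$, and one eliminates the three sign possibilities for $\langle e,p\rangle$: the value $\langle e,p\rangle=0$ forces $\langle m,p\rangle=0$ for all $m\in\sigma^\vee_M$, hence $p=0$ (as $\sigma^\vee_M$ spans $M_\QQ$); $\langle e,p\rangle>0$ forces $\langle m,p\rangle\le 0$ for all such $m$, i.e.\ $-p\in(\sigma^\vee)^\vee=\sigma$, which with $e\in\sigma^\vee$ gives $\langle e,p\rangle\le 0$, a contradiction; and $\langle e,p\rangle<0$ is symmetric. So no nonzero Type~I field is locally nilpotent, and locally nilpotent $\Longleftrightarrow$ Type~II.

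Finally, $\partial_{e,p}$ is semisimple iff $\CC[X_\sigma]$ has a basis of $\partial_{e,p}$-eigenvectors. When $e=0$ the monomial basis works, since $\partial_{0,p}(\chi^m)=\langle m,p\rangle\chi^m$, and $0\in\sigma^\vee_M$ puts the field in Type~I. When $e\ne 0$ and $p\ne 0$, there is no eigenvector for a nonzero eigenvalue: from the coefficient identity $\langle m-e,p\rangle\,c_{m-e}=\lambda\,c_m$ attached to $\partial_{e,p}v=\lambda v$ with $v=\sum_m c_m\chi^m$ and $\lambda\ne 0$, a single nonzero $c_{m_0}$ propagates to $c_{m_0-ke}\ne 0$ for all $k\ge 0$, impossible for a polynomial; since $\partial_{e,p}\ne 0$ it therefore cannot be semisimple. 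Hence $\partial_{e,p}$ is semisimple iff $e=0$, which is automatically Type~I, completing the plan.
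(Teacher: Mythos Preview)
Your proof is correct, and the extendability half follows the same reduction as the paper (condition~(1) there is exactly your ``$\langle m,p\rangle=0$ or $m+e\in\sigma^\vee_M$''). The organization of the converse is different: the paper splits on whether $p$ is proportional to a ray and then reads off the sign of $\langle e,\rho\rangle$, whereas you split on whether $e\in\sigma^\vee_M$ and, when it is not, deduce $p\in\ZZ\rho$ from the vanishing of $\langle\,\cdot\,,p\rangle$ on the facet $\rho^\perp\cap\sigma^\vee$. Both routes land on the same endgame, and your explicit construction of $m\in\sigma^\vee_M$ with $\langle m,\rho_e\rangle=1$ (via $m_1+Nf$) actually fills in a step the paper leaves implicit.

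The main divergence is in the ``furthermore'' clause. The paper proves a single stronger fact for Type~I with $e\neq 0$: it exhibits an element (either $\chi^e$ or some $\chi^m$) whose $\partial$-orbit has unbounded monomial support, so $\partial$ is not even \emph{locally finite}. This one observation rules out both local nilpotency and semisimplicity at once. You instead treat the two conclusions separately: a clean duality argument ($\pm p\in\sigma$ forcing the wrong sign on $\langle e,p\rangle$) to exclude local nilpotency, and a downward propagation $c_{m_0}\neq 0\Rightarrow c_{m_0-ke}\neq 0$ to exclude nonzero eigenvectors whenever $e\neq 0$. Your arguments are self-contained and avoid the notion of local finiteness; the paper's is shorter and gives the extra information that Type~I fields with $e\neq 0$ are not locally finite at all.
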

\begin{proof}

  The vector field $\partial_{e,p}$ extends to $X_\sigma$ if and only
  if $\partial_{e,p}(\CC[\sigma^\vee_M])\subseteq
  \CC[\sigma^\vee_M]$. Since $\CC[\sigma^\vee_M]$ is spanned by
  $\chi^m$ for all $m\in \sigma^\vee_M$, it is enough to show that
  $\partial_{e,p}(\chi^m)\in \CC[\sigma^\vee_M]$. In combinatorial
  terms, this corresponds to the condition:
  \begin{align}
    \label{eq:1}
    \mbox{For every } m\in \sigma^\vee_M\setminus p^\bot, 
    \mbox{ we have }
    \langle m+e, \rho\rangle\geq 0\mbox{ for all } \rho\in
    \sigma(1)\,.
  \end{align}
  
  Assume first that $p$ is not proportional to any $\rho\in
  \sigma(1)$. Then for every $\rho\in \sigma(1)$ there exists $m\in
  \sigma^\vee_M$ such that $\langle \rho,m\rangle=0$ and $\langle
  p,m\rangle\neq 0$. Hence, \eqref{eq:1} implies that $\langle
  \rho,e\rangle\geq 0$ and so $\partial_{e,p}$ is of type I.

  Assume now that there exists $\rho_e\in \sigma(1)$ such that $p\in
  \ZZ \rho_{e}$. With the same argument as above we can show that
  $\langle \rho,e\rangle\geq 0$ for all $\rho\in
  \sigma(1)\setminus\{\rho_e\}$. Let now $m\in \sigma^\vee_M$ such
  that $\langle \rho_e, m\rangle=1$. Then \eqref{eq:1} implies that
  $\langle \rho_e,m+e\rangle \geq 0$. This yields $\langle
  \rho_e,e\rangle \geq -1$. If $\langle \rho_e,e\rangle=-1$ then
  $\partial_{e,p}$ is of type II. If $\langle \rho_e,e\rangle>-1$ then
  $\langle \rho_e,e\rangle\geq 0$ and $\partial_{e,p}$ is of type I.
  
  \smallskip

  To prove the second assertion, we let $\partial=\partial_{e,p}$ be a
  homogeneous vector field. A straightforward computation shows that
  \begin{align}
    \label{eq:2}
    \partial^{\ell+1}(\chi^m)=\langle m+\ell e,p\rangle
    \cdot \partial^\ell(\chi^m)\cdot \chi^e\,.
  \end{align}

  Assume first that $\partial$ is of type I and that $e\in
  \sigma^\vee_M\setminus \{0\}$. If $\langle e,p\rangle\neq 0$ then
  \eqref{eq:2} yields
  $$\partial^{\ell}(\chi^e)=\ell!\cdot\langle
  e,p\rangle^\ell\cdot\chi^{\ell e}\neq 0\,,$$ and so $\partial$ is
  not locally finite since $\spann\{\chi^{k e}\mid k\in \ZZ_{\geq
    0}\}$ is not finite dimensional. If $\langle e,p\rangle=0$ then
  let $m\in \sigma_M^\vee$ be such that $\langle m,p\rangle\neq 0$. In
  this case \eqref{eq:2} implies
  $$\partial^{\ell}(\chi^m)=\langle
  m,p\rangle^\ell\cdot\chi^{m+(\ell-1) e}\neq 0\,,$$ and again $\partial$ is
  not locally finite with a similar argument.

  Assume now that $\partial$ is of type I and that $e=0$. The vector
  field $\partial$ is the infinitesimal generator of the algebraic
  $\CC^*$-action on $X_\sigma$ given by the $\ZZ$-grading on
  $\CC[\sigma_M^\vee]$ induced by the degree function
  $\deg(\chi^m)=\langle p,m\rangle$. Hence, the vector field
  $\partial$ is semisimple.

  Finally, assume that $\partial$ is of type II. For every $m\in
  \sigma^\vee_M$ we let $\ell=\langle m,\rho_e\rangle$. Now,
  $\partial_{e,p}$ is locally nilpotent since
  $\partial_{e,p}^{\ell+1}(\chi^m)=0$ by \eqref{eq:2}.
\end{proof}

In the following corollary, we give an explicit description of the
homogeneous complete vector fields on an affine toric
variety.

\begin{corollary} \label{cor-gl-int}
  The vector field $\partial_{e,p}$ is complete if and only
  if it is of type II, or it is of type I and $\langle e,p\rangle=0$.
\end{corollary}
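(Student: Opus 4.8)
The plan is to reduce the completeness question to the two dichotomous cases of the preceding proposition and analyze each using the flow. First, if $\partial_{e,p}$ is of type II, then by the proposition it is locally nilpotent; since a locally nilpotent derivation always integrates to a $\GA$-action (its time-$t$ flow is the algebra automorphism $\exp(t\partial_{e,p})$, which is a well-defined polynomial in $t$ applied to each $\chi^m$ because $\partial_{e,p}^{\ell+1}(\chi^m)=0$ for $\ell=\langle m,\rho_e\rangle$ by \eqref{eq:2}), the vector field is complete. This disposes of the ``if'' direction in the type II case with essentially no computation.

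Next, for type I with $\langle e,p\rangle=0$: here \eqref{eq:2} gives $\partial_{e,p}^2(\chi^m)=\langle m,p\rangle\langle m+e,p\rangle\chi^{m+2e}=\langle m,p\rangle^2\chi^{m+2e}$ and more generally $\partial_{e,p}^{k}(\chi^m)=\langle m,p\rangle^k\chi^{m+ke}$ for $k\geq 1$. Hence on each generator $\chi^m$ the flow is $\exp(t\partial_{e,p})(\chi^m)=\sum_{k\geq 0}\frac{t^k}{k!}\langle m,p\rangle^k\chi^{m+ke}$, and since $\langle m+ke,p\rangle=\langle m,p\rangle$ for all $k$, we can resum: writing $\lambda=\langle m,p\rangle$, the formal series is $\chi^m\cdot$ (a function only of the ``$e$-direction''), and one checks it equals $e^{t\lambda}$ times $\chi^m$ evaluated on the hypersurface-translated variable — concretely it defines the automorphism scaling coordinates along $e^\perp$-weight. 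The cleanest way to phrase this: the $\GM$-action $s\cdot\chi^m = s^{\langle m,p\rangle}\chi^m$ commutes with translation by $e$ (because $\langle e,p\rangle=0$ makes $e$ a weight-zero vector), and $\partial_{e,p}$ is, up to this torus action, conjugate to a constant vector field $\chi^m\mapsto \langle m,p\rangle \chi^{m+e}$ whose exponential is visibly algebraic. Either way, the flow exists for all complex time, so $\partial_{e,p}$ is complete.

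For the ``only if'' direction I would argue by contraposition: suppose $\partial_{e,p}$ extends to $X_\sigma$ (so it is type I or type II) but is neither type II nor type I with $\langle e,p\rangle=0$. Then it is type I with $e\in\sigma^\vee_M$ and $\langle e,p\rangle\neq 0$. By the computation in the proof of the proposition, $\partial^\ell(\chi^e)=\ell!\langle e,p\rangle^\ell\chi^{\ell e}$, so the orbit of $\chi^e$ under repeated application of $\partial$ spans the infinite-dimensional space $\spann\{\chi^{ke}\mid k\geq 0\}$, i.e. $\partial$ is not locally finite. A complete algebraic vector field on an affine variety has an algebraic flow (this is standard; alternatively one uses that the induced one-parameter group of automorphisms of $\CC[X_\sigma]$ is algebraic), and such a vector field must be locally finite — on any finite-dimensional $\partial$-invariant subspace the flow is given by the matrix exponential, and in general $\CC[X]$ is an increasing union of such subspaces. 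The contradiction shows $\langle e,p\rangle\neq 0$ is impossible, completing the proof.

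The main obstacle is the ``only if'' direction, specifically the step asserting that a complete algebraic vector field is locally finite. Over $\CC$ for affine varieties this follows from the fact that the flow gives an algebraic (in fact, polynomial in the time parameter after fixing a closed embedding, by a degree bound) action; I would either cite the standard reference for this (e.g. the discussion of algebraic one-parameter subgroups, or simply note that a complete vector field generates a $\GA$- or $\GM$-action by the identity principle applied to the algebraic relations, forcing local finiteness) or give the short self-contained argument: pick coordinates, observe the flow $\varphi_t$ satisfies $\frac{d}{dt}\varphi_t^*f = \partial(\varphi_t^*f)$, so if $\partial$ is not locally finite then $\varphi_t^* \chi^e$ has unbounded degree as a function of $t$, contradicting that it is a fixed rational (algebraic) expression in $t$ and the coordinates. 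The type II and type I with $\langle e,p\rangle=0$ directions require only the already-available formula \eqref{eq:2} and the observation that exponentials of locally finite operators are algebraic, so those are routine.
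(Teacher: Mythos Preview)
Your ``if'' direction is essentially fine (though the paper's argument for the type I, $\langle e,p\rangle=0$ case is cleaner: simply observe $\partial_{e,p}=\chi^e\cdot\partial_{0,p}$ with $\chi^e\in\ker\partial_{0,p}$, and a complete field multiplied by a function in its kernel is again complete).

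The ``only if'' direction, however, has a genuine gap. Your argument hinges on the claim that a complete algebraic vector field on an affine variety is locally finite, equivalently that its flow is algebraic. This is \emph{false}, and the counterexamples are exactly the vector fields you just proved complete in the previous paragraph: take $\partial_{e,p}$ of type I with $e\neq 0$ and $\langle e,p\rangle=0$. You computed $\partial_{e,p}^k(\chi^m)=\langle m,p\rangle^k\chi^{m+ke}$, so for any $m$ with $\langle m,p\rangle\neq 0$ the iterates span the infinite-dimensional space $\spann\{\chi^{m+ke}:k\geq 0\}$; hence $\partial_{e,p}$ is not locally finite, yet it is complete. In the paper's terminology, ``complete'' means the holomorphic flow exists for all complex time---there is no requirement that this flow be algebraic, and generically it is not (already $\partial_{0,p}$ has flow $\varphi_t^*\chi^m=e^{t\langle m,p\rangle}\chi^m$, transcendental in $t$). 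Your proposed self-contained argument assumes $\varphi_t^*\chi^e$ is rational in $t$, which begs the question.

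The paper instead argues directly that the flow blows up: since $e\in\sigma^\vee_M$ the ideal of $X\setminus\TT$ is $\partial$-stable, so $\TT$ is invariant and it suffices to show incompleteness on $\TT$. Choosing coordinates so that $p=(1,0,\ldots,0)$ and $e=(e_1,\ldots,e_n)$ with $e_1=\langle e,p\rangle>0$, the restriction to $\TT$ is $x_1^{e_1+1}x_2^{e_2}\cdots x_n^{e_n}\,\partial/\partial x_1$; along any integral curve $x_2,\ldots,x_n$ are constant and nonzero, reducing the question to the one-variable field $c\,x_1^{e_1+1}\,\partial/\partial x_1$ with $e_1+1\geq 2$, whose solutions reach infinity in finite time. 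This explicit blow-up is what replaces your locally-finite step.
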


\begin{proof}
  The vector fields of type II are locally nilpotent, hence
  complete. In the following, we assume that $\partial=\partial_{e,p}$
  is of type I. First, assume that $\langle e,p\rangle=0$. Then
  $\partial=\chi^e\cdot \partial_{0,p}$ and since $\chi^e$ belongs to
  the kernel of $\partial_{0,p}$, we have that $\partial$ is complete.

  Assume now that $\langle p,e\rangle\neq 0$. Let $I$ be the ideal of
  $X\setminus \TT$, i.e.,
  $$I=\bigoplus_{m\in \relint(\sigma^\vee)\cap M}\CC\chi^m\,.$$
  Since $e\in \sigma^\vee_M$, we have that $\partial(I)\subseteq I$.
  Hence, $X\setminus \TT$ is invariant by $\partial_{e,p}$ and so
  $\TT$ is also invariant by $\partial_{e,p}$. In the following, we
  show that $\partial$ is not complete when restricted to $\TT$. Since
  $\lambda\partial$, $\lambda\in \CC^*$ is complete if and only if
  $\partial$ is complete, we will assume that $p$ is a primitive
  vector in $N$ and $\langle e,p\rangle>0$.

  Without loss of generality, we choose mutually dual bases of $N$ and
  $M$ such that $p=(1,0,\ldots,0)$ and $e=(e_1,\ldots,e_n)$, with
  $e_1>0$ and $n=\rank N$. We will also denote $x_i=\chi^{\beta_i}$
  the standard coordinates of the torus $\TT$, where $\{\beta_i\mid
  i=1,\ldots, n\}$ is the base of $N$. In this coordinates, the vector
  field $\partial$ restricted to $\TT$ is given by
  $$\partial=x_1^{e_1+1}x_2^{e_2}\cdots
  x_n^{e_n}\frac{\partial}{\partial x_1}\,,$$ which is not complete on
  $\TT$ since $e_1>0$. Indeed the vector fields $x^n\partial/\partial x$ on $\C$ are not complete for $n\geq2$. 
\end{proof}

Remark that in Corollary~\ref{cor-gl-int} complete vector fields of
type I are extensions of complete vector fields on the big torus $\TT$
while complete vector fields of type II are locally nilpotent, hence
not complete in $\TT$. In the next lemma, we give a criterion for a
homogeneous vector field to vanish in an orbit closure.

\begin{lemma} \label{lm:vanish} Let $\partial_{e,p}$ be a non-zero
  homogeneous vector field on $X_\sigma$ and let $\tau\subseteq
  \sigma$ be a face. Then $\partial_{e,p}$ vanishes at the orbit
  closure $\overline{\OO(\tau)}$ if and only if

  \begin{enumerate}[Type I:]
  \item $p\in\Span\tau$ or $\langle e,\rho\rangle>0$ for some $\rho\in
    \tau(1)$.
  \item $\langle e,\rho\rangle>0$ for some $\rho\in \tau(1)$.
  \end{enumerate}
\end{lemma}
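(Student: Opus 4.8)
The plan is to unwind the definition of ``vanishing on $\overline{\OO(\tau)}$'' into a condition on the monomials $\chi^m$ and then translate that condition through the orbit-cone dictionary already recalled in the excerpt. Concretely, $\partial_{e,p}$ vanishes on $\overline{\OO(\tau)}$ means $\partial_{e,p}(\CC[\sigma_M^\vee])\subseteq I(\tau)$, and since both sides are spanned by characters it suffices to test on each $\chi^m$ with $m\in\sigma_M^\vee$. We have $\partial_{e,p}(\chi^m)=\langle m,p\rangle\chi^{m+e}$, so the requirement is: for every $m\in\sigma_M^\vee$ with $\langle m,p\rangle\neq 0$, either $m+e\notin\sigma_M^\vee$ (impossible here, since the field extends to $X_\sigma$, so $m+e$ is always in $\sigma_M^\vee$) — hence the real requirement is $m+e\in\sigma_M^\vee\setminus\tau^\bot$, i.e. $\langle m+e,\rho\rangle>0$ for some $\rho\in\tau(1)$, whenever $m\in\sigma_M^\vee$ and $\langle m,p\rangle\neq0$. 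So the whole lemma reduces to analyzing the combinatorial statement
\[
(\star)\qquad \forall\, m\in\sigma_M^\vee \text{ with } \langle m,p\rangle\neq0:\ \exists\,\rho\in\tau(1)\text{ with }\langle m+e,\rho\rangle>0.
\]

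Next I would split into the two types. For \textbf{Type II}: here $p\in\ZZ\rho_e$ and $\langle e,\rho_e\rangle=-1$, while $\langle e,\rho\rangle\geq 0$ for all other rays. If $\langle e,\rho\rangle>0$ for some $\rho\in\tau(1)$, then that $\rho$ witnesses $(\star)$ for every $m$ (since $\langle m,\rho\rangle\geq0$), giving vanishing. Conversely, if $\langle e,\rho\rangle=0$ for all $\rho\in\tau(1)$, I must produce some $m\in\sigma_M^\vee$ with $\langle m,p\rangle\neq0$ but $\langle m+e,\rho\rangle=0$ for all $\rho\in\tau(1)$; then $(\star)$ fails. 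The natural candidate is to take $m$ in the relative interior of the face $\tau^\vee$... more carefully, I want $\langle m,\rho\rangle=0$ for all $\rho\in\tau(1)$ (so $m\in\tau^\bot\cap\sigma_M^\vee$) and $\langle m,p\rangle\neq0$; such $m$ exists unless $\tau^\bot\cap\sigma_M^\vee\subseteq p^\bot$. Since $p\in\ZZ\rho_e$ and, one checks, $\rho_e\notin\tau(1)$ in this sub-case (if $\rho_e\in\tau(1)$ we'd have $\langle e,\rho_e\rangle=-1\neq0$, contradicting our assumption), the vector $p$ is not in the span of $\tau$, so $\tau^\bot\not\subseteq p^\bot$ and a suitable lattice point $m$ can be found; adding $e$ to it keeps $\langle m+e,\rho\rangle=0$ on $\tau(1)$ because $\langle e,\rho\rangle=0$ there too. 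Hence $(\star)$ fails and the field does not vanish — establishing the Type II equivalence.

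For \textbf{Type I}: here $e\in\sigma_M^\vee$, so $\langle e,\rho\rangle\geq0$ for all $\rho\in\sigma(1)$. Again the easy direction: if $\langle e,\rho\rangle>0$ for some $\rho\in\tau(1)$ then $(\star)$ holds and the field vanishes. So assume $\langle e,\rho\rangle=0$ for all $\rho\in\tau(1)$, i.e. $e\in\tau^\bot$. Then I claim $(\star)$ holds iff $p\in\Span\tau$. If $p\notin\Span\tau$, pick $m\in\tau^\bot\cap\sigma_M^\vee$ with $\langle m,p\rangle\neq0$ (possible exactly as above, since $p^\bot\not\supseteq\tau^\bot$); then $m+e\in\tau^\bot$ as well, so $(\star)$ fails and the field does not vanish. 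If $p\in\Span\tau$, then for any $m\in\sigma_M^\vee$ with $\langle m,p\rangle\neq0$ we cannot have $m\in\tau^\bot$, so $\langle m,\rho\rangle>0$ for some $\rho\in\tau(1)$, and since $\langle e,\rho\rangle\geq0$ we get $\langle m+e,\rho\rangle>0$: $(\star)$ holds and the field vanishes. Combining, in Type I the field vanishes iff $p\in\Span\tau$ or $\langle e,\rho\rangle>0$ for some $\rho\in\tau(1)$, matching the statement.

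The routine parts are the monomial bookkeeping; the one step deserving care — and the main obstacle — is the repeated lattice-point existence claim: given $p\notin\Span\tau$ (equivalently $p\notin\tau^\bot{}^\bot$ as a rational subspace), one needs an \emph{integral} point $m\in\tau^\bot\cap M$ lying in $\sigma^\vee$ with $\langle m,p\rangle\neq0$. I would justify this by noting $\tau^\bot\cap\sigma^\vee$ is the face $\tau^\vee$ of $\sigma^\vee$ dual to $\tau$, its linear span is $\tau^\bot$, and $\tau^\bot\not\subseteq p^\bot$ since $p\notin(\tau^\bot)^\bot=\Span\tau$; so the rational hyperplane $p^\bot$ cannot contain the full-dimensional (within $\tau^\bot$) rational cone $\tau^\vee$, and because the lattice points of $\tau^\vee$ span $\tau^\bot$ over $\QQ$ (a standard fact, as $\sigma^\vee$ is a rational cone), some lattice point $m$ of $\tau^\vee$ has $\langle m,p\rangle\neq0$. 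One must also double-check the boundary behavior (the degenerate cases $\tau=0$, $\tau=\sigma$, or $e=0$), but these fall out of the same analysis without extra work.
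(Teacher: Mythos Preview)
Your Type~I argument and the lattice-point existence discussion are correct and match the paper's approach. However, there is a genuine gap in your Type~II converse.

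You write: ``Conversely, if $\langle e,\rho\rangle=0$ for all $\rho\in\tau(1)$, I must produce\ldots''. But this is not the full negation of ``$\langle e,\rho\rangle>0$ for some $\rho\in\tau(1)$'': the negation is $\langle e,\rho\rangle\le 0$ for all $\rho\in\tau(1)$. In Type~II we have $\langle e,\rho_e\rangle=-1$, so whenever $\rho_e\in\tau(1)$ your hypothesis ``$\langle e,\rho\rangle=0$ for all $\rho\in\tau(1)$'' is simply never satisfied, and the sub-case
\[
\rho_e\in\tau(1),\qquad \langle e,\rho\rangle=0\ \text{for all}\ \rho\in\tau(1)\setminus\{\rho_e\}
\]
is left untreated. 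You yourself note parenthetically that $\rho_e\in\tau(1)$ would contradict your assumption --- but that only shows you are excluding this case, not handling it.

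This sub-case requires a different witness. To make $(\star)$ fail you need $m\in\sigma_M^\vee$ with $\langle m,p\rangle\neq0$ and $\langle m+e,\rho\rangle=0$ for every $\rho\in\tau(1)$. For $\rho\in\tau(1)\setminus\{\rho_e\}$ this forces $\langle m,\rho\rangle=0$ as before, but for $\rho=\rho_e$ the equation reads $\langle m,\rho_e\rangle-1=0$, i.e.\ $\langle m,\rho_e\rangle=1$. So the required $m$ does \emph{not} lie in $\tau^\bot$; your construction via $m\in\tau^\bot\cap\sigma_M^\vee$ cannot work here (indeed any such $m$ would have $\langle m,\rho_e\rangle=0$, hence $\langle m,p\rangle=0$). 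The paper treats exactly this sub-case separately, choosing $m\in\sigma_M^\vee$ with $\langle m,\rho_e\rangle=1$ and $\langle m,\rho\rangle=0$ for the remaining $\rho\in\tau(1)$; this $m$ then satisfies $m\notin p^\bot$ and $m+e\in\tau^\bot$, so $(\star)$ fails and $\partial_{e,p}$ does not vanish on $\overline{\OO(\tau)}$.
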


\begin{proof}
  The vector field $\partial_{e,p}$ does not vanish at the orbit
  closure $\overline{\OO(\tau)}$ if and only if
  $\partial_{e,p}\left(\CC[\sigma_M^\vee]\right)\not\subseteq
  I(\tau)$. In combinatorial terms this happens if and only if
  \begin{align}
    \label{eq:4}
    \mbox{there exists }m \in \sigma^\vee_M\setminus p^\bot \mbox{
      such that } \langle m+e,\rho\rangle=0 \mbox{ for all } \rho\in
    \tau(1)\,.
  \end{align}

  \noindent \textbf{Case of type I.} In this case, we have
  $e\in\sigma^\vee_M$ so $\langle m+e,\rho\rangle=0$ for all
  $\rho\in\tau(1)$ if and only if $\langle m,\rho\rangle=0$ and
  $\langle e,\rho\rangle=0$ for all $\rho\in\tau(1)$. This is the case
  if and only if $m\in\tau^\bot$ and $e\in\tau^\bot$. Such and
  $m\in\sigma^\vee_M\setminus p^\bot$ exists if and only if
  $\tau^\bot\not\subseteq p^\bot$, i.e., if and only if $p\notin \Span
  \tau$. Hence, we conclude that $\partial_{e,p}$ does not vanish at
  the orbit closure $\overline{\OO(\tau)}$ if and only if $p\notin
  \Span \tau$ and $\langle e,\rho\rangle=0$ for all $\rho\in \tau(1)$.
  
  \medskip\noindent \textbf{Case of type II.}  In this case we have
  that there exists $\rho_e\in \sigma(1)$ such that $p\in \ZZ
  \rho_e\setminus \{0\}$,
  $\langle e,\rho_e\rangle=-1$, and $\langle e,\rho\rangle\geq 0$ for
  all $\rho\in \sigma(1)\setminus \{\rho_e\}$.

  Assume first that $\rho_e\notin \tau(1)$. An argument similar to
  case I yields that $\partial_{e,p}$ does not vanish at the orbit
  closure $\overline{\OO(\tau)}$ if and only if $p\notin \Span \tau$
  and $\langle e,\rho\rangle=0$ for all $\rho\in \tau(1)$. Since
  $\rho_e\notin \tau(1)$, we have that $p\notin \Span \tau$ and so the
  vector field $\partial_{e,p}$ does not vanish at the orbit closure
  $\overline{\OO(\tau)}$ if and only if $\langle e,\rho\rangle=0$ for
  all $\rho\in \tau(1)$.

  Assume now that $\rho_e\in \tau(1)$. If there exists $\rho\in
  \tau(1)$ such that $\langle e,\rho\rangle>0$, then $\langle
  m+e,\rho\rangle>0$ for all $m\in \sigma^\vee_M$ and so
  $\partial_{e,p}$ vanishes at the orbit $\overline{\OO(\tau)}$ by
  \eqref{eq:4}. Assume $\langle e,\rho\rangle=0$ for all $\rho\in
  \tau(1)\setminus \{\rho_e\}$ and let $m\in \sigma^\vee_M$ be such
  that $\langle m,\rho_e\rangle=1$ and $\langle m,\rho\rangle=0$ for
  all $\rho\in \tau(1)\setminus \{\rho_e\}$. We have $\langle
  m,\rho_e\rangle\neq 0$ so $m\notin p^\bot$ and $\langle
  m+e,\rho\rangle=0$ for all $\rho\in \tau(1)$. By \eqref{eq:4}, we
  conclude that $\partial_{e,p}$ does not vanish at the orbit closure
  $\overline{\OO(\tau)}$.
\end{proof}

\begin{remark}
  The degree of a homogeneous locally nilpotent vector fields (of type
  II) is called a root of $\sigma$. The set of all roots of $\sigma$
  is denoted by $\RR(\sigma)$. For a root $e\in\RR(\sigma)$, the ray
  $\rho_e$ is called the distinguished ray of $e$ and the $\GA$-action
  generated by the homogeneous locally nilpotent vector field
  $\partial_{e,\rho_e}$ is denoted by $H_e$.
\end{remark}

In order to show the ADP for toric varieties, we need to show that
$X_\sigma$ is homogeneous with respect to some closed subvariety
$Y$. In \cite{AKZ10}, the authors prove that $X_\sigma$ is homogeneous
with respect to $\Xsing_\sigma$. In fact, they show that the group of
special automorphisms acts infinite-transitively with respect to
$\Xsing_\sigma$. In the following, we will show how their methods can
be applied to show that $X_\sigma$ is homogeneous with respect to any
$\TT$-invariant closed subvariety $Y$.

\begin{proposition} \label{rel-hom}
  Let $\sigma\subseteq N_\QQ$ be a pointed cone and let $Y$ be any
  $\TT$-invariant closed subvariety of $X_\sigma$ containing
  $\Xsing_\sigma$. Then $X_\sigma$ is homogeneous relative to $Y$.
\end{proposition}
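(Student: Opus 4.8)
The statement to prove is Proposition~\ref{rel-hom}: that $X_\sigma$ is homogeneous relative to any $\TT$-invariant closed subvariety $Y \supseteq \Xsing_\sigma$, i.e.\ that $\Aut(X_\sigma, Y)$ acts transitively on $X_\sigma \setminus Y$. My plan is to follow the strategy of \cite{AKZ10} but to keep careful track of which orbit closures the relevant locally nilpotent vector fields vanish on, using Lemma~\ref{lm:vanish}. The point is that the automorphisms we produce must fix $Y$, so we are only allowed to use $\GA$-actions $H_e$ generated by roots $e \in \RR(\sigma)$ whose vector field $\partial_{e,\rho_e}$ vanishes on $Y$. Since $Y$ is $\TT$-invariant and closed, $Y$ is a union of orbit closures $\overline{\OO(\tau)}$; writing $Y = \bigcup_j \overline{\OO(\tau_j)}$, a root $e$ is admissible if and only if, for each $j$, Lemma~\ref{lm:vanish}(Type II) holds, i.e.\ $\langle e, \rho\rangle > 0$ for some $\rho \in \tau_j(1)$.

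First I would reduce the transitivity statement to a statement about orbits: since each $\TT$-orbit is itself a homogeneous space under $\TT \subseteq \Aut(X_\sigma, Y)$ (as $\TT$ preserves $Y$), it suffices to show that for every pair of orbits $\OO(\tau), \OO(\tau')$ not contained in $Y$ there is an automorphism in $\Aut(X_\sigma, Y)$ carrying a point of one to a point of the other; and by composing, it is enough to connect each orbit $\OO(\tau)$ with $\OO(\tau) \not\subseteq Y$ to the big torus $\OO(\{0\}) = \TT$. Here I would recall from the Orbit--Cone correspondence that $\OO(\tau) \subseteq \Xreg$ when $\tau$ is smooth, and that $\OO(\tau) \not\subseteq Y$ forces $\tau$ to be smooth (since $\Xsing_\sigma \subseteq Y$) — this will be used to guarantee that, after choosing suitable coordinates adapted to $\tau$, the geometry near $\OO(\tau)$ looks like a product of a torus factor and an affine-space factor, so that one can ``move out'' of the boundary stratum.

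Next, the heart of the argument: given a smooth face $\tau$ with $\OO(\tau) \not\subseteq Y$, I want to produce a root $e$ with distinguished ray $\rho_e \in \tau(1)$ such that (a) $\partial_{e,\rho_e}$ vanishes on $Y$, and (b) the flow $H_e$ moves $\OO(\tau)$ into a strictly larger orbit (one whose closure does not contain $\OO(\tau)$), thereby reducing $\dim \tau$. For (a): pick $\rho_e$ to be a ray of $\tau$ not lying in any $\tau_j$ — such a ray exists because $\OO(\tau) \not\subseteq \overline{\OO(\tau_j)}$ means $\tau_j$ is not a face containing $\tau$, so in particular $\tau_j(1) \not\supseteq \tau(1)$; more carefully one needs $\rho_e \notin \tau_j(1)$ for all $j$ simultaneously, and then one builds $e$ with $\langle e, \rho_e\rangle = -1$, $\langle e, \rho\rangle \geq 0$ on the other rays, and in fact $\langle e, \rho\rangle > 0$ for at least one ray of each $\tau_j$ (possible because $\tau_j$ has a ray outside $\{\rho_e\}$, and the construction of roots allows prescribing positive values there, using smoothness of $\tau$ and the freedom in the $N$-lattice). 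This is essentially the construction of roots in \cite{AKZ10}, and Lemma~\ref{lm:vanish} then certifies vanishing on $Y$. For (b): the explicit form $\partial_{e,\rho_e}(\chi^m) = \langle m,\rho_e\rangle \chi^{m+e}$ shows the flow is nontrivial on $\OO(\tau)$ precisely because $\rho_e \in \tau(1)$ (so there is $m$ with $\langle m, \rho_e\rangle \neq 0$ vanishing on the other rays of $\tau$), and the standard local computation shows a generic point of $\OO(\tau)$ flows into $\OO(\tau')$ for a face $\tau' \subsetneq \tau$. Iterating, and composing with $\TT$, we reach $\TT$ itself, proving transitivity on $X_\sigma \setminus Y$.

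\textbf{The main obstacle.} The delicate point is the simultaneous construction of the root $e$: I must choose a single primitive ray $\rho_e \in \tau(1)$ and a single $e \in M$ that is a valid Type~II degree for $\sigma$ \emph{and} has $\langle e, \rho\rangle > 0$ on some ray of every component $\tau_j$ of $Y$ — all while keeping $\langle e, \rho\rangle \geq 0$ on all remaining rays of $\sigma$. The existence of such $\rho_e$ (outside every $\tau_j(1)$) and such $e$ relies on: smoothness of $\tau$ (so that $\tau(1)$ extends to a lattice basis, giving room to prescribe values), the fact that $\OO(\tau)\not\subseteq Y$ translating into each $\tau_j$ failing to contain $\tau$ as a face, and a counting/positivity argument that these finitely many linear conditions are compatible. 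This is exactly where one uses that $Y$ contains $\Xsing_\sigma$: it guarantees $\tau$ is smooth, without which the root construction can genuinely fail. I would isolate this as a combinatorial lemma and check it ray-by-ray.
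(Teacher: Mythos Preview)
Your overall strategy---reduce to moving each $\OO(\tau)\not\subseteq Y$ into a larger orbit via a root whose $\GA$-action fixes $Y$, checking the latter through Lemma~\ref{lm:vanish}---is exactly the paper's. But there is a concrete error in your combinatorics that breaks the argument as written.

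You claim that $\OO(\tau)\not\subseteq\overline{\OO(\tau_j)}$ means ``$\tau_j$ is not a face containing $\tau$'', i.e.\ $\tau\not\subseteq\tau_j$, and hence $\tau(1)\not\subseteq\tau_j(1)$, allowing you to choose $\rho_e\in\tau(1)\setminus\bigcup_j\tau_j(1)$. The direction is reversed: by the Orbit--Cone correspondence, $\OO(\tau)\subseteq\overline{\OO(\tau_j)}$ if and only if $\tau_j\subseteq\tau$, so the hypothesis gives $\tau_j\not\subseteq\tau$, not $\tau\not\subseteq\tau_j$. In particular your chosen $\rho_e$ need not exist. A minimal counterexample: $X_\sigma=\AF^2$ with $\sigma=\cone(e_1,e_2)$, $Y=\{0\}=\overline{\OO(\sigma)}$, and $\tau=\cone(e_1)$. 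Here $\tau(1)=\{e_1\}\subseteq\{e_1,e_2\}=\sigma(1)$, so there is no ray of $\tau$ outside $\tau_j(1)=\sigma(1)$; yet one certainly must move $\{0\}\times\CC^*$ into the torus while fixing the origin.

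The fix is to drop the requirement on $\rho_e$ entirely and instead use the stronger root supplied by \cite[Lemma~2.3]{AKZ10}: for any smooth face $\tau$ there is a root $e$ with $\langle e,\rho_1\rangle=-1$, $\langle e,\rho_i\rangle=0$ for the remaining rays of $\tau$, and $\langle e,\rho\rangle>0$ for \emph{every} ray $\rho\notin\tau(1)$. Now the correct implication $\tau_j\not\subseteq\tau$ gives a ray $\rho\in\tau_j(1)\setminus\tau(1)$, hence $\langle e,\rho\rangle>0$, and Lemma~\ref{lm:vanish} yields vanishing of $\partial_{e,\rho_e}$ on $\overline{\OO(\tau_j)}$ automatically---no case-by-case positivity juggling among the $\tau_j$ is needed. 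This is precisely the paper's argument, and it dissolves what you flagged as the ``main obstacle''.
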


\begin{proof}
  Using the $\TT$-action and the Orbit-Cone correspondence, to prove
  the theorem it is enough to find, for every orbit $\OO(\tau)$ in
  $\Xreg_\sigma$ different from the open orbit, an automorphism that
  \begin{enumerate}[$(i)$]
  \item sends a point $x$ in $\OO(\tau)$ into an
  orbit of higher dimension, and
\item leaves stable every orbit not containing $\OO(\tau)$ in its
  closure.
  \end{enumerate}
   
  Let $\rho_1,\ldots,\rho_\ell$ be the rays of $\tau$.  In
  \cite[Lemma~2.3]{AKZ10} and its proof, the authors show that for
  every smooth orbit $\OO(\tau)$ there exists a root $e\in \RR(\sigma)$
  such that
  \begin{align}
    \label{eq:3}
    \langle \rho_1,e\rangle=-1,\ \langle \rho_2,e\rangle=\ldots=
    \langle \rho_\ell,e\rangle=0,\mbox{ and } \langle \rho,e\rangle>0\
    \mbox{ for all rays }\rho\not\notin\tau(1)\,.
  \end{align}
  Furthermore, they show that a generic automorphism $\alpha$ in the
  $\GA$-action $H_e$ corresponding to the root $e$ satisfies $(i)$.

  Let $\OO(\delta)$ be any orbit that does not contain $\OO(\tau)$ in
  its closure. In combinatorial terms, this means that $\delta$ is a
  face of $\sigma$ that is not contained in $\tau$.  We claim that
  $H_e$ leaves $\overline{\OO(\delta)}$ point-wise invariant and so
  $\alpha$ satisfies $(ii)$ which proves the proposition.
  
  In terms of the vector field $\partial_{e,\rho_e}$, our claim is
  equivalent to $\partial_{e,\rho_e}$ vanishes at
  $\overline{\OO(\delta)}$.  Since $\delta$ is not contained in $\tau$
  there exists a ray $\rho$ of $\delta$ that is not a ray of
  $\tau$. By \eqref{eq:3} we have $\langle e,\rho\rangle>0$. Now the
  claim follows from Lemma \ref{lm:vanish}.
\end{proof}

For our next theorem we need the following lemma that follows by
direct computation.

\begin{lemma} \label{commutator} %
  Let $\partial_{e_1,p_1}$ and $\partial_{e_2,p_2}$ be two homogeneous
  vector fields. Then
  $\left[\partial_{e_1,p_1},\partial_{e_2,p_2}\right]=\partial_{e,p}$,
  where $p=p_1(e_2)\cdot p_2-p_2(e_1)\cdot p_1$ and $e=e_1+e_2$.
\end{lemma}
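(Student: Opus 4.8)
The plan is to verify the identity by evaluating both sides on an arbitrary character $\chi^m$, $m\in M$, since the $\chi^m$ form a basis of $\CC[M]$ and the commutator of two linear endomorphisms is determined by its values on a basis. First I would record the two compositions. Applying the definition of $\partial_{e,p}$ twice,
\[
\partial_{e_2,p_2}\big(\partial_{e_1,p_1}(\chi^m)\big)=\langle m,p_1\rangle\,\langle m+e_1,p_2\rangle\,\chi^{m+e_1+e_2},
\]
and the analogous formula with the indices $1$ and $2$ interchanged. Subtracting the two, the monomial $\chi^{m+e_1+e_2}=\chi^{m+e}$ factors out, which already identifies the shift vector of the bracket as $e=e_1+e_2$, and one is left with a single scalar coefficient depending on $m$.

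The core step is then to simplify that coefficient. Expanding by bilinearity of the pairing, $\langle m+e_i,p_j\rangle=\langle m,p_j\rangle+\langle e_i,p_j\rangle$, the symmetric terms $\langle m,p_1\rangle\langle m,p_2\rangle$ cancel between the two orders, leaving
\[
\langle m,p_2\rangle\,\langle e_2,p_1\rangle-\langle m,p_1\rangle\,\langle e_1,p_2\rangle
=\big\langle m,\ p_1(e_2)\,p_2-p_2(e_1)\,p_1\big\rangle,
\]
where I have used the convention $\langle e,p\rangle=p(e)$. This is precisely $\langle m,p\rangle$ for $p=p_1(e_2)\,p_2-p_2(e_1)\,p_1$, so $[\partial_{e_1,p_1},\partial_{e_2,p_2}](\chi^m)=\langle m,p\rangle\,\chi^{m+e}=\partial_{e,p}(\chi^m)$ for every $m$, which gives the claim.

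Since the whole argument is a short symbolic manipulation on characters, there is no genuine obstacle; the only point requiring care is the bookkeeping of the sign convention for the Lie bracket, namely $[\partial_1,\partial_2]=\partial_1\partial_2-\partial_2\partial_1$, which is what yields the stated $p=p_1(e_2)p_2-p_2(e_1)p_1$ rather than its negative. I would also remark that the computation is valid in the degenerate cases where $p$ or $e$ vanishes (so that $\partial_{e,p}$ is the zero vector field), so the statement needs no separate treatment there.
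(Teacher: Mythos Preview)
Your proposal is correct and is exactly the ``direct computation'' the paper alludes to without writing out: you evaluate both sides on the basis elements $\chi^m$, expand by bilinearity, and observe that the symmetric cross term cancels to leave $\langle m,\, p_1(e_2)p_2-p_2(e_1)p_1\rangle\,\chi^{m+e_1+e_2}$. There is nothing more to the paper's intended argument, and your remark about the sign convention and the degenerate cases is appropriate.
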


\begin{theorem} \label{finalthm}%
  Let $X$ be a affine toric variety of dimension at least two and let
  $Y$ be a $\TT$-invariant closed subvariety of $X$ containing
  $\Xsing$. Then $X$ has the ADP relative to $Y$ if and only if
  $X\setminus Y\neq \TT$.
\end{theorem}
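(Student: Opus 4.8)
The plan is to prove both directions separately, with the "only if" direction being a relatively quick obstruction argument and the "if" direction requiring the machinery built up in the excerpt, culminating in an application of Theorem~\ref{thm}.

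For the "only if" direction, suppose $X\setminus Y=\TT$, so $Y=X\setminus\TT$ and $I=I(X\setminus\TT)=\bigoplus_{m\in(\relint\sigma^\vee)\cap M}\CC\chi^m$. I would pick the smooth point $\mathfrak{e}\in\TT$ and argue that every complete vector field in $\VFalg(X,Y)$ vanishes at $\mathfrak{e}$ to high order in a way that obstructs generating the full tangent space. More precisely, by Corollary~\ref{cor-gl-int} a homogeneous complete vector field $\partial_{e,p}$ is either of type II — hence locally nilpotent, with distinguished ray $\rho_e\in\sigma(1)$, and by Lemma~\ref{lm:vanish} it must vanish on all of $Y=X\setminus\TT$ forcing $e$ to be in the appropriate region — or of type I with $\langle e,p\rangle=0$; for such a field to vanish on $X\setminus\TT$ one needs $e\in\relint\sigma^\vee\cap M$ (in the type I case), and then $\partial_{e,p}=\chi^e\partial_{0,p}$ vanishes at $\mathfrak{e}$ since $\chi^e(\mathfrak{e})\ne 0$ but... actually the cleaner route: one shows that the $\CC[X]$-module generated by all complete vector fields vanishing on $X\setminus\TT$, evaluated at $\mathfrak{e}$, together with its image under the isotropy group (which for $Y=X\setminus\TT$ is all of $\Aut(X,Y)$ acting on $\TT$), cannot yield a nonzero scalar multiple of every tangent direction — equivalently $\Liealg(X,X\setminus\TT)$ is too small because all type I complete fields carry the factor $\chi^e$ with $e$ in the strict interior and type II fields are $\GA$-actions which also vanish on $X\setminus\TT$ to order forcing them into $I^2\VFalg$ or worse; chasing through, no power $I^\ell\VFalg(X,Y)$ lands inside $\Liealg(X,Y)$. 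I expect this direction to require a careful degree/monomial bookkeeping argument rather than anything conceptually hard.

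For the "if" direction, assume $X\setminus Y\ne\TT$. By Proposition~\ref{rel-hom}, $X$ is homogeneous with respect to $Y$, so Theorem~\ref{thm} applies once I exhibit a finitely generated submodule $L\subseteq\VFalg(X,Y)$ with $L\subseteq\Liealg(X,Y)$ whose fiber over a chosen $x_0\in X\setminus Y$ contains a generating set. The natural choice is $x_0=\mathfrak{e}\in\TT\subseteq X\setminus Y$, whose isotropy group in $\Aut(X,Y)$ contains the subtorus of $\TT$ fixing... well, $\mathfrak{e}$ is fixed by no nontrivial element of $\TT$, but it is fixed by many $\GA$-type automorphisms and their conjugates; more usefully, since $\TT$ acts transitively on itself the generating-set condition at $\mathfrak{e}$ can be checked after spreading by $\TT$. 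The module $L$ I would take is generated by brackets of complete homogeneous fields: start with locally nilpotent type II fields $\partial_{e,\rho_e}$ (which are complete and, by Lemma~\ref{lm:vanish}, vanish on $Y$ provided $e$ has $\langle e,\rho\rangle>0$ for some ray of each face cut out by $Y$) and type I complete fields $\partial_{e,p}$ with $\langle e,p\rangle=0$; using Lemma~\ref{commutator}, $[\partial_{e_1,p_1},\partial_{e_2,p_2}]=\partial_{e_1+e_2,\,p_1(e_2)p_2-p_2(e_1)p_1}$, so by bracketing I can adjust the $p$-component while moving $e$ further into $\sigma^\vee$, and by multiplying type I complete fields by suitable $\chi^m\in I$ (which stays in $\Liealg$ since $\chi^m\partial_{e,p}$ with $\langle e+m,p\rangle=0$ is again complete of type I — or is obtained by a further bracket) I can produce, at $\mathfrak{e}$, the full collection $\{p\}$ of tangent vectors spanning $N\otimes\CC=T_{\mathfrak{e}}\TT$. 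The condition $X\setminus Y\ne\TT$ is exactly what guarantees there is a ray $\rho$ of $\sigma$ with $\rho\not\subseteq Y$ in the relevant sense — equivalently $I=I(Y)$ is not contained in $I(X\setminus\TT)$ — which provides the "room" to keep brackets and multiplications inside $\VFalg(X,Y)$ while not collapsing to zero at $\mathfrak{e}$.

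The main obstacle will be the bookkeeping in the "if" direction: one must simultaneously (a) keep every vector field used inside $\VFalg(X,Y)$, i.e.\ vanishing on $Y$ — this constrains the $e$'s via Lemma~\ref{lm:vanish} relative to the faces $\tau$ with $\overline{\OO(\tau)}\subseteq Y$; (b) keep them in $\Liealg(X,Y)$, i.e.\ build them from type I and type II \emph{complete} fields via Lie brackets and $\CC[X]$-scaling, where scaling is only legitimate when it preserves completeness or is itself realized by a bracket; and (c) arrange that the resulting fibers at $\mathfrak{e}$ span all of $T_{\mathfrak{e}}X=N\otimes_\ZZ\CC$. I would organize (c) by noting each ray $\rho\in\sigma(1)$ with $\rho\not\subseteq Y$ supplies, via a type II field with distinguished ray $\rho$ together with its $\TT$-conjugates, a field evaluating to a nonzero multiple of $\rho$ at a suitable point, and then using Lemma~\ref{commutator} to rotate these into a spanning set — handling separately the degenerate case where $\sigma$ is a single ray (dimension-one, excluded) versus the genuinely $n\ge 2$ case where enough independent rays or enough bracketing directions are available. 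The homogeneity from Proposition~\ref{rel-hom} and the reduction in Theorem~\ref{thm} then finish the argument.
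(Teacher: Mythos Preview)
Your ``if'' direction is essentially the paper's approach, though the paper is considerably cleaner: it fixes one ray $\rho_1$ with $\OO(\rho_1)\not\subseteq Y$, takes one type~II field $\partial_{e_1,\rho_1}$ vanishing on $Y$, and then defines $L=\Span\{\partial_{e,p}\mid p\in N,\ e\in e_3+\sigma^\vee_M\}$ for a fixed $e_3=e_1+e_2\in\relint\sigma^\vee$. For each $e$, the fields with $\langle e,p\rangle=0$ are complete type~I (and vanish on $X\setminus\TT\supseteq Y$ since $e$ is in the open interior); one additional bracket $[\partial_{e_1,\rho_1},\partial_{e-e_1,p_4}]$ produces a field $\partial_{e,p}$ with $\langle e,p\rangle\neq 0$, and linearity in $p$ finishes. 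The fiber of $L$ at $\mathfrak{e}$ is already all of $N\otimes\CC$, so no isotropy argument is needed --- your worry about the isotropy of $\mathfrak{e}$ is moot. Your more elaborate plan of spreading by $\TT$-conjugates and ``rotating into a spanning set'' would probably work but is unnecessary.

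Your ``only if'' direction has a genuine gap. You propose a degree/vanishing-order bookkeeping argument: complete fields in $\VFalg(X,X\setminus\TT)$ are either type~I with $e$ interior or type~II locally nilpotent, and you expect this to force $\Liealg(X,X\setminus\TT)$ to miss some of $I^\ell\VFalg$. But $\Liealg$ is generated as a \emph{Lie algebra}, not as a module, and Lie brackets can lower vanishing order and mix types arbitrarily; you have not identified any quantity preserved under bracketing. The paper's argument supplies exactly such an invariant: by a theorem of Anders\'en, every complete algebraic vector field on $\TT$ preserves the Haar form $\omega=\tfrac{dx_1}{x_1}\wedge\cdots\wedge\tfrac{dx_n}{x_n}$, and this property is closed under Lie brackets (since $L_{[\partial,\delta]}=[L_\partial,L_\delta]$). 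A complete field in $\VFalg(X,X\setminus\TT)$ restricts to a complete field on $\TT$, so $\Liealg(X,X\setminus\TT)|_\TT$ consists of $\omega$-preserving fields. One then simply exhibits $x_1^N\,\partial/\partial x_1\in I^\ell\VFalg(X,X\setminus\TT)$ (for $N$ large), which does not preserve $\omega$. Without the Anders\'en invariant or a substitute, your bookkeeping will not close.
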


\begin{proof}
  Let $X=X_\sigma$ be the toric variety given by the pointed cone
  $\sigma\in N_\QQ$ and let $X_\sigma\setminus Y\neq \TT$. There is at
  least one codimension one $\TT$-orbit not contained in $Y$. Assume
  it is $\OO(\rho_1)$ for some ray $\rho_1\in \sigma(1)$. Let $e_1$ be
  a root with $\rho_1$ as distinguished ray. By \eqref{eq:3}, we can
  assume that $\langle e_1,\rho\rangle>0$ for all $\rho\in
  \sigma(1)\setminus \{\rho_1\}$. By Lemma~\ref{lm:vanish}, the
  locally nilpotent vector field $\partial_{e_1,\rho_1}$ vanishes at
  $Y$ and so $\partial_{e_1,\rho_1}\in \VFalg(X_\sigma,Y)$.

  Letting $e_2,e_3\in \relint(\sigma^\vee)\cap M$ be such
  that $e_3=e_1+e_2$, we let
  $$L=\Span\left\{\partial_{e,p}\mid p\in N, e\in e_3+\sigma^\vee_M\right\}\,.$$

  The set $L$ is contained in $\VFalg(X_\sigma,Y)$ since
  $\partial_{e,p}\in L$ vanishes in $X_\sigma\setminus \TT$. In fact,
  $L$ is a submodule of $\VFalg(X_\sigma,Y)$ since for every $m\in
  \sigma^\vee_M$ and every $\partial_{e,p}\in L$, we have
  $\chi^m\partial_{e,p}=\partial_{e+m,p}\in L$.  Furthermore, the
  fiber over the identity $\mathfrak{e}\in \TT\subseteq X_\sigma$ is
  given by
  \begin{align} \label{spanning}
    L_\mathfrak{e}=\Span
    \{\partial_{e,p}[\mathfrak{e}]\mid \partial_{e,p}\in
    L\}=\Span\{p\mid \partial_{e,p}\in L\}=N\otimes_\ZZ
    \CC=T_\mathfrak{e}X_\sigma\,,
  \end{align} %
  and so $L_\mathfrak{e}$ contains a generating set.  We claim that
  $L\subseteq \Liealg(X_\sigma,Y)$.  Hence $X_\sigma$ has the ADP
  relative to $Y$ by Theorem~\ref{thm} and Proposition~\ref{rel-hom}.

  By Corollary~\ref{cor-gl-int}, the vector field $\partial_{e,p}$ is
  complete if $\langle e,p\rangle=0$. Hence, to prove our
  claim it is enough to show that for every $e\in e_3+\sigma^\vee_M$,
  there exists $p\in N$ such that $\langle e,p\rangle\neq 0$ and
  $\partial_{e,p}\in \Liealg(X_\sigma,Y)$.

  Indeed, let $e_4=e-e_1$ and choose $p_4$ be such that $\langle
  e_4,p_4\rangle=0$ and $\langle e_1,p_4\rangle\neq 0$ which implies
  that $\partial_{e_4,p_4}$ belongs to $\Liealg(X_\sigma,Y)$. This is
  possible since $e_4$ lies in $\relint\sigma^\vee$ and $e_1$ is a
  root of $\sigma^\vee$. By Lemma~\ref{commutator} we have
  $$\left[\partial_{e_1,\rho_1},\partial_{e_4,p_4}\right]=
  \partial_{e,p}\quad\mbox{where}\quad p=\rho_1(e_4)\cdot
  p_4-p_4(e_1)\cdot \rho_1\,.$$ %
  A routine computation shows that
  $$\langle e,p\rangle=\
  \langle e,\rho_1(e_4)\cdot p_4-p_4(e_1)\cdot \rho_1\rangle=\langle
  e_1,p_4\rangle\neq 0\,,$$ proving the claim.

  \medskip

  Assume now that $X\setminus Y= \TT$. The converse of the theorem
  follows from the fact that for all affine toric varieties $X$ and
  all $\ell\in \ZZ_{>0}$ there is a vector field $\partial\in
  I^\ell\VFalg(X,X\setminus \TT)\setminus \Liealg(X,X\setminus \TT)$,
  where $I=I(X\setminus \TT)$. Indeed, Anders\'en \cite{Andersen00}
  proved that any complete algebraic vector field on $\TT$ does
  preserve the Haar form
  $$\omega=\frac{dx_1}{x_1}\wedge\ldots\wedge \frac{dx_n}{x_n}\,.$$
  Thus if we find $\partial$ in $I^\ell\VFalg(X,X\setminus \TT)$ whose
  restriction to $\TT$ does not preserve $\omega$ we are done.

  After a change of coordinates one can assume that
  $(1,0,\ldots,0)\in\relint\sigma^\vee$. Then
  $\partial=x_1^N\frac{\partial}{\partial x_1}$ is a regular vector
  field on $X$ contained in $I^\ell\VFalg(X,X\setminus \TT)$ for $N$
  big enough which does not preserve $\omega$.
\end{proof}

\begin{remark}
  L\'arusson proved in \cite{La11,For13} that all smooth toric
  varieties are Oka-Forstneri\v{c} manifolds, however it is still
  unknown if they are elliptic, see \cite{Forst11,Ku14} for
  definitions.  The proof of Theorem~\ref{finalthm} can be adapted to
  prove the following: every smooth quasi-affine toric variety is
  elliptic (and thus an Oka-Forstneri\v{c} manifold).  Indeed, the
  torus $\TT$ is well known to be elliptic. Let $X_0$ be a smooth
  quasi-affine toric variety different from $\TT$. Let also $X$ be an
  affine toric variety such that $X_0\subseteq X$ is an equivariant
  open embedding and let $Y=X\setminus X_0$. Now,
  Proposition~\ref{rel-hom} and \eqref{spanning} imply that $X_0$ is
  elliptic \cite[Example~5.5.13~(B)]{Forst11}.
\end{remark}

\section{Classification of complete vector fields on affine toric
  surfaces}

In this section we classify all complete algebraic vector
fields on a given affine toric surface $X_\sigma$. The classification
works essentially the same as the
classification of complete vector fields on $\C^2$ done by
Brunella \cite{Br3}.

From now on we will use the fact that each affine toric surface
different from $\C^*\times\C$ or $\C^*\times\C^*$ can be seen as the
quotient of $\C^2$ by the action of a cyclic group. Let $d$ be the
order of the group and let $e$ be a co-prime number $0<e<d$ and
consider the action of $\Z_d$ given by $\zeta\cdot(u,v) = (\zeta u,
\zeta^e v)$ where $\zeta$ is a primitive $d$-th root of unity. We
obtain the projection $\pi: \C^2 \rightarrow \C^2/\Z_d =: V_{d,e}$
onto our toric surface which is a ramified covering of $V_{d,e}$
ramified only over the unique singular point. Certainly each vector
field on $X$ pulls back to an invariant vector field of $\C^2$ by
using the fiber-wise isomorphism $D\pi$ on the tangent space. A
complete vector field on $V_{d,e}$ will pull back to an invariant
complete vector field on $\C^2$.

\begin{definition} 
  Let $f: \C^2 \rightarrow \C$ be a regular function on $\C^2$. The
  function $f$ is called $\Z_d$-preserved if the fibers of $f$ are
  sent to fibers of $f$ by the $\Z_d$-action. It is called
  $\Z_d$-homogeneous of degree $[i]\in\Z_d$ if $\zeta^*f(u,v)=
  f(\zeta\cdot(u,v)) = \zeta^if(u,v)$ for all $(u,v)\in\C^2$. Let
  $A_{[i]}$ denote the space of $\Z_d$-homogeneous polynomials of
  degree $[i]$ then we obtain a decomposition of the ring of regular
  functions on $\C^2$ into $\Z_d$-homogeneous parts $\C[u,v] = A_{[0]}
  \oplus \ldots \oplus A_{[d-1]}$. In particular $A_{[0]}$ is the ring
  of invariant functions $\C[u,v]^{\Z_d}=\C[V_{d,e}]$.
\end{definition}

It is clear from the definition that $A_{[i]}$ is spanned by all
monomials $u^mv^n$ with $[m+en]=[i]\in\Z_d$. Clearly invariant vector
fields are of the form $f\partial/\partial u + g\partial/\partial v$
with $f\in A_{[1]}$ and $g\in A_{[e]}$. Moreover we have the following
easy lemma:

\begin{lemma} \label{lemhomog} Let $f: \C^2 \rightarrow \C$ be a
  regular function then the following are equivalent:
  \begin{enumerate}
  \item $f$ is $\Z_d$-homogeneous,
  \item $f$ is $\Z_d$-preserved with $f(0,0)=0$,
  \item $f^{-1}(0)$ is $\Z_d$-invariant.
  \end{enumerate}

\end{lemma}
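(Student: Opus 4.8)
The plan is to prove the equivalence of the three conditions by establishing the cycle $(1)\Rightarrow(2)\Rightarrow(3)\Rightarrow(1)$, working throughout with the $\Z_d$-grading $\C[u,v]=A_{[0]}\oplus\cdots\oplus A_{[d-1]}$ already introduced. First I would observe that $(1)\Rightarrow(2)$ is immediate: if $f\in A_{[i]}$ then $\zeta^*f=\zeta^i f$, so the fibers $f^{-1}(c)$ are permuted among themselves (indeed $f^{-1}(c)$ is sent to $f^{-1}(\zeta^{-i}c)$, or $f^{-1}(\zeta^i c)$ depending on convention), hence $f$ is $\Z_d$-preserved; and $f(0,0)=\zeta^i f(0,0)$ forces $f(0,0)=0$ unless $\zeta^i=1$, in which case $f$ is invariant and the value at the fixed point $(0,0)$ is already consistent — one still needs $f(0,0)=0$ here, which we can arrange by noting the claim is about $\Z_d$-homogeneity of \emph{arbitrary} degree; more carefully, $(2)$ with $f(0,0)=0$ should be read as the normalized statement, and for degree $[0]$ homogeneity means invariance, so we just need $f$ invariant with $f(0,0)=0$, which is part of the hypothesis in $(2)$.

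For $(2)\Rightarrow(3)$: if $f$ is $\Z_d$-preserved then the $\Z_d$-action permutes the fibers of $f$; since $(0,0)$ is the unique fixed point of the action and it lies on the fiber $f^{-1}(f(0,0))=f^{-1}(0)$, that particular fiber must be sent to a fiber containing a fixed point, hence to itself. Thus $f^{-1}(0)$ is $\Z_d$-invariant. The only subtlety is ruling out that $f$ is constant, but if $f$ is constant then $f^{-1}(0)$ is either empty or all of $\C^2$, both $\Z_d$-invariant, so the implication still holds trivially.

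The main work is $(3)\Rightarrow(1)$, and this is where I expect the real obstacle. Suppose $f^{-1}(0)$ is $\Z_d$-invariant. Decompose $f=\sum_{j=0}^{d-1} f_j$ with $f_j\in A_{[j]}$. The hypothesis says that the zero set of $f$ is unchanged by $(u,v)\mapsto(\zeta u,\zeta^e v)$, i.e. $Z(f)=Z(\zeta^*f)=Z\big(\sum_j \zeta^{j} f_j\big)$ (using $\zeta^*f_j=\zeta^j f_j$). More generally $Z(f)=Z\big(\sum_j \zeta^{kj} f_j\big)$ for every $k=0,\dots,d-1$. Now I would argue via unique factorization in $\C[u,v]$: the reduced equation of the hypersurface $Z(f)$ is a well-defined polynomial $g$ (product of the distinct irreducible factors of $f$) up to scalar, and each $\sum_j \zeta^{kj} f_j$ must be a scalar times a power-product of the same irreducible factors. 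Taking a suitable linear combination — specifically, extracting the homogeneous component $f_i$ via $f_i=\frac1d\sum_{k=0}^{d-1}\zeta^{-ki}\big(\sum_j\zeta^{kj}f_j\big)$ — one sees $f_i$ is supported on $Z(f)$; the cleaner route is: since all the $d$ polynomials $\zeta^*{}^k f$ define the same reduced hypersurface, and the map $k\mapsto \zeta^*{}^k$ on $f$ only scales each graded piece, a short argument (e.g. comparing which irreducible factors appear, or using that the $f_j$ with $f_j\neq 0$ that share a common zero set must essentially be proportional after accounting for multiplicities) forces all but one $f_j$ to vanish. I would most likely phrase this using that $f$ and $\zeta^*f$ have the same zero set with the same multiplicities (both being $\Z_d$-related), hence $\zeta^*f=c\cdot f$ for a constant $c$ by unique factorization, and then comparing graded pieces gives $\zeta^j f_j = c f_j$ for all $j$, so $f_j=0$ unless $\zeta^j=c$, i.e. $f$ is $\Z_d$-homogeneous. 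The delicate point to get right is justifying $\zeta^*f=cf$ rather than merely $Z(\zeta^*f)=Z(f)$: since $\zeta^*$ is a linear automorphism of $\C[u,v]$ it preserves multiplicities of irreducible factors, and a $\Z_d$-invariant reduced divisor pulls back to itself, so $\zeta^* f$ and $f$ differ by a unit, i.e. a nonzero constant. Once $\zeta^*f=cf$ is in hand, the rest is the one-line comparison of graded components above, completing the cycle and hence the lemma.
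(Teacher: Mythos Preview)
Your cycle $(1)\Rightarrow(2)\Rightarrow(3)\Rightarrow(1)$ and the key move in $(3)\Rightarrow(1)$ --- pass from invariance of $f^{-1}(0)$ to $\zeta^*f=cf$ for some constant $c$, then compare graded pieces --- is exactly the paper's argument; the paper simply asserts ``if the zero fibres of $f$ and $\zeta^*f$ coincide then $\zeta^*f=a\cdot f$'' and moves on, while you attempt to justify this assertion.

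Your justification, however, does not close the gap, and the step in fact fails for non-reduced $f$. You argue that $\zeta^*$ ``preserves multiplicities of irreducible factors''; that is true in the sense that the multiplicity of $\zeta^*g$ in $\zeta^*f$ equals that of $g$ in $f$, but invariance of the \emph{set} $f^{-1}(0)$ only says the collection of irreducible components is permuted by the action, not that each component is fixed. If this permutation swaps components carrying different multiplicities, $\zeta^*f$ need not be a scalar multiple of $f$. Concretely, for $d=2$, $e=1$ and $f=(u+1)^2(1-u)$ one has $f^{-1}(0)=\{u=\pm 1\}$, which is $\Z_2$-invariant, yet $\zeta^*f=(1-u)^2(1+u)$ is not proportional to $f$, and indeed $f=(-u^3+u)+(-u^2+1)$ has nonzero parts in both $A_{[0]}$ and $A_{[1]}$. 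Thus $(3)\Rightarrow(1)$, and hence the lemma as literally stated, requires an additional hypothesis such as $f$ reduced; under that hypothesis two polynomials with the same zero locus differ by a unit and both your argument and the paper's go through. All applications of the lemma in the paper are to reduced $f$ (components of automorphisms, or $\C$/$\C^*$-fibrations), so nothing downstream is affected. You were also right to flag the degree-$[0]$ wrinkle in $(1)\Rightarrow(2)$: a nonzero constant is $\Z_d$-homogeneous of degree $[0]$ but fails $f(0,0)=0$, so that implication likewise needs a mild extra assumption (e.g.\ $f$ nonconstant).
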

\begin{proof}
  (1) implies (2) since if $f$ is constant on a curve then also
  $\zeta^i \cdot f$ is constant and $f(0,0)=0$ follows directly from
  the homogeneity. The fiber $f^{-1}(0)$ contains the $\Z_d$-fixed
  point $(0,0)$ thus (3) follows from (2). If the zero fibers of $f$
  and $\zeta^*f$ coincide then we have that $\zeta^*f(u,v) = a\cdot
  f(u,v)$ for some $a\in\C^*$. By $f(u,v)=\zeta^{d*}f(u,v)=a^d f(u,v)$
  we see that $a$ is a $d$-th root of unity and thus (3) implies (1).
\end{proof}

The following lemma is the crucial step in the classification of
invariant complete algebraic vector fields and hence of
complete algebraic vector fields on the toric variety
$V_{d,e}$. Recall that a rational first integral of a vector field is a rational function such that its fibers are tangential to the vector field.
\begin{lemma}\label{lemhomfib}
  Let $\partial$ be a $\Z_d$-invariant complete algebraic vector field
  on $\C^2$ then $\partial$ preserves either a $\Z_d$-homogeneous
  fibration $f: \C^2 \rightarrow \C$ with general fibers $\C$ or
  $\C^*$ or $\partial$ has a reduced rational first integral $g:\C^2
  \dashrightarrow \C$.
\end{lemma}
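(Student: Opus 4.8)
The plan is to deduce the statement from Brunella's classification of complete algebraic vector fields on $\C^2$ \cite{Br3} together with an averaging argument over $\Z_d$. Since $\C^2$ is affine, an algebraic vector field is a polynomial vector field, so Brunella's theorem applies directly to $\partial$: either $\partial$ admits a nonconstant rational first integral, or $\partial$ preserves a morphism $f\colon\C^2\to\C$ whose general fiber is $\C$ or $\C^*$; moreover, in the second alternative this fibration is intrinsically attached to $\partial$, being unique up to composing $f$ with an automorphism of the base $\C$. I would treat the two alternatives separately, and only the second one uses the $\Z_d$-action.

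Suppose first that $\partial$ has a nonconstant rational first integral. Since $\partial\neq 0$ (otherwise $g=u$ already works) and a derivation of an algebraic field extension is determined on the base, the subfield $K\subseteq\C(u,v)$ of $\partial$-invariant rational functions has transcendence degree one over $\C$; as a curve dominated by $\C^2$ is rational, $K=\C(g)$ for some $g\colon\C^2\dashrightarrow\C$, and after the standard reduction — passing to the Stein factorization of the associated pencil to make the general fiber connected, then dividing out the multiplicity of the general fiber — one may take $g$ to be reduced. This $g$ is the desired reduced rational first integral and we are done in this case.

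Now suppose $\partial$ has no nonconstant rational first integral, so by Brunella it preserves a morphism $f\colon\C^2\to\C$ with general fiber $\C$ or $\C^*$, unique up to reparametrizing the target. For each $\zeta\in\Z_d$ write $\tau_\zeta$ for the linear automorphism $(u,v)\mapsto(\zeta u,\zeta^e v)$ of $\C^2$. Then $\tau_\zeta^{\ast}f$ is again a morphism to $\C$ whose general fiber is isomorphic to that of $f$, and because $\partial$ is $\Z_d$-invariant the identity $\partial(f)=b(f)$ pulls back to $\partial(\tau_\zeta^{\ast}f)=b(\tau_\zeta^{\ast}f)$, so $\partial$ preserves $\tau_\zeta^{\ast}f$ as well. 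By uniqueness of the canonical fibration there is an affine automorphism $\mu_\zeta$ of $\C$ with $\tau_\zeta^{\ast}f=\mu_\zeta\circ f$, and $\zeta\mapsto\mu_\zeta$ is a homomorphism from $\Z_d$ to $\operatorname{Aut}(\C)$ with finite cyclic image. A finite cyclic subgroup of $\operatorname{Aut}(\C)$ has a fixed point $c_0\in\C$; replacing $f$ by $f-c_0$ — which changes neither the fibration, nor the fact that $\partial$ preserves it, nor the type of its general fiber — turns each $\mu_\zeta$ into multiplication by a root of unity $a_\zeta$, and $\zeta\mapsto a_\zeta$ is then a character of $\Z_d$. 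Hence $\tau_\zeta^{\ast}f=a_\zeta f$ for all $\zeta$, i.e. $f$ is $\Z_d$-homogeneous, which completes the proof.

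The part I expect to be the main obstacle is the appeal to \cite{Br3}: one must extract from Brunella's classification not merely that a complete vector field without rational first integral preserves \emph{some} $\C$- or $\C^*$-fibration, but the sharper fact that this fibration is \emph{canonical}, i.e. determined by $\partial$ up to an automorphism of the base. This uniqueness is exactly what lets the $\Z_d$-action descend to the base; once it is in place, the L\"uroth-type reduction to a reduced first integral and the averaging of $\Z_d$ inside the solvable group $\operatorname{Aut}(\C)$ are routine.
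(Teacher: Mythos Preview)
Your proof has a genuine gap at exactly the point you flagged as the main obstacle: the uniqueness of Brunella's fibration is false, even in the absence of a rational first integral. Take $\partial=\partial/\partial u+v\,\partial/\partial v$. Its flow $(u,v)\mapsto(u+t,e^{t}v)$ is entire, and it has no nonconstant rational first integral (any $h\in\C(u,v)$ with $h_u+vh_v=0$ is forced to involve $e^{-u}$). Yet $\partial$ preserves both the fibration $u$ and the fibration $v$, whose fibers are transverse lines, so they are certainly not related by an automorphism of the base. Hence the step ``by uniqueness of the canonical fibration there is an affine automorphism $\mu_\zeta$ of $\C$ with $\tau_\zeta^{*}f=\mu_\zeta\circ f$'' is unjustified, and the rest of your second case collapses.

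The paper avoids uniqueness altogether. It picks \emph{any} Brunella fibration $f$ with $f(0,0)=0$. Because $\partial$ is $\Z_d$-invariant one has $\partial(0)=0$, so the flow $\varphi^t$ fixes the origin; the induced automorphism of the base $\C$ therefore fixes $0$ and is of the form $z\mapsto a_t z$, i.e.\ $f\circ\varphi^t=a_t f$. Since $\varphi^t$ commutes with the $\Z_d$-action, the \emph{same} scalar governs $g:=\zeta^{*}f$, namely $g\circ\varphi^t=a_t g$. Thus $f/g$ is flow-invariant. If $f$ is not $\Z_d$-homogeneous then (by Lemma~\ref{lemhomog}) $g$ is not proportional to $f$, so $f/g$ is a nonconstant rational first integral, which one then reduces via Stein factorization. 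In other words, the paper proves directly that a non-homogeneous preserved fibration \emph{manufactures} a first integral; this is the contrapositive of what you tried to obtain from uniqueness, but it requires no uniqueness at all.
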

\begin{proof}
  By \cite{Br3} there is fibration $f:\C^2\rightarrow\C$ with $\C$ or
  $\C^*$ fibers which is preserved by the flow $\varphi^t$ of
  $\partial$. We may assume that $f(0,0)=0$. If $f$ is
  $\Z_d$-homogeneous then we are done. If $f$ is not
  $\Z_d$-homogeneous then we construct a rational first integral. The
  map $\varphi^t$ acts by multiplication with some $a_t$ on the set of
  fibers of $f$ parametrized by $\C$ so we have
  $f(\varphi^t(u,v))=a_tf(u,v)$ (indeed $(0,0)$ is a fixed point of
  $\varphi^t$). Since $\partial$ is invariant the same holds true for
  $g(u,v)=f(\zeta\cdot(u,v))$ and hence the rational map $f/g$ is a
  rational first integral for $\partial$. By Stein factorization
  $\partial$ has a reduced first integral. Recall that every rational
  function $\C^2\dashrightarrow \P^1$ can be decomposed into
  $F\circ\tilde f\C^2\dashrightarrow \P^1\rightarrow \P^1$ such that
  $\tilde f$ has connected regular fibers, or equivalently is
  reduced. This factorization is called Stein factorization.
\end{proof}

The next step will be the classification of $\Z_d$-homogeneous
fibrations with general fibers $\C$ or $\C^*$ and rational first
integrals for invariant vector fields. The classification will be done
up to equivariant automorphisms of $\C^2$ which will lead to a
classification of the vector fields on $V_{d,e}$ up to automorphism of
$V_{d,e}$ since equivariant automorphisms clearly project down to
automorphims of the quotient. Equivariant automorphisms of $\C^2$ are
given by invertible maps $(u,v)\mapsto (p(u,v),q(u,v))$ with $p\in
A_{[1]}$ and $q\in A_{[e]}$.

First we establish an equivariant version of the Abhyankar-Moh
Theorem. We provide a proof using the classical verion of the
theorem. See \cite{ArZa} for a different proof.

\begin{lemma} \label{lemabhy} %
  Let $\C\cong L\subset \C^2$ be a line which is invariant by the
  group action. Then there is an equivariant automorphism of $\C^2$
  mapping $L$ to $\lbrace u =0\rbrace$ or $\lbrace
  v=0\rbrace$. Moreover a cross of two invariant lines can be mapped
  to $\lbrace uv=0\rbrace$.
\end{lemma}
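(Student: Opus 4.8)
The plan is to reproduce the classical Abhyankar--Moh reduction and observe that $\Z_d$-invariance of $L$ forces every elementary automorphism that the reduction uses to be equivariant. First I would set things up. We may assume $d\ge 2$. Since the origin is the only $\Z_d$-fixed point of $\C^2$ and every finite group of affine automorphisms of $\mathbb{A}^1$ has a fixed point (average an orbit), the point of $L$ corresponding to $0\in\C^2$ is fixed, so $0\in L$. Choosing a linear coordinate $t$ on $L\cong\mathbb{A}^1$ centred at $0$, the generator $\zeta$ acts on $L$ by $t\mapsto\omega t$ for some root of unity $\omega$, and $u|_L=f(t)$, $v|_L=g(t)$ define a closed embedding $(f,g)\colon\mathbb{A}^1\hookrightarrow\C^2$ with $f(0)=g(0)=0$, $f(\omega t)=\zeta f(t)$ and $g(\omega t)=\zeta^e g(t)$. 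If $f\equiv 0$ or $g\equiv 0$ then $L$ is already $\{u=0\}$ or $\{v=0\}$; otherwise, comparing leading terms gives $\omega^{\deg f}=\zeta$ and $\omega^{\deg g}=\zeta^e$, and since $\zeta$ has order $d$ this forces $\omega$ to be a primitive $d$-th root of unity, $\gcd(\deg f,d)=\gcd(\deg g,d)=1$, and $\deg g\equiv e\,\deg f\pmod d$.

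Next I would run the reduction. By the classical Abhyankar--Moh--Suzuki theorem one of $\deg f,\deg g$ divides the other. If $\deg f\mid\deg g$, write $\deg g=k\,\deg f$; the congruence $\deg g\equiv e\,\deg f\pmod d$ together with $\gcd(\deg f,d)=1$ gives $k\equiv e\pmod d$, so $u^k\in A_{[e]}$ and $(u,v)\mapsto(u,\,v-c\,u^k)$ is an equivariant automorphism. Taking $c$ to be the ratio of the leading coefficients, this automorphism lowers $\deg g$ and, because $\zeta^k=\zeta^e$, leaves the parametrization $(f,\,g-c\,f^k)$ of the same form. Symmetrically, if $\deg g\mid\deg f$ one obtains an equivariant elementary automorphism $(u,v)\mapsto(u-c\,v^k,v)$ lowering $\deg f$. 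Since $\deg f+\deg g$ strictly decreases at each step, after finitely many equivariant elementary automorphisms one of the two components vanishes, i.e.\ $L$ has been carried onto $\{u=0\}$ or $\{v=0\}$; their composition is the required equivariant automorphism.

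For the cross $L_1\cup L_2$ --- two invariant lines meeting transversally, necessarily at the origin --- I would first apply the above to $L_1$ and, after replacing $V_{d,e}$ by $V_{d,e^{-1}}$ (which interchanges $u$ and $v$) if necessary, assume the resulting equivariant automorphism $\Psi_1$ sends $L_1$ to $\{v=0\}$. Then $L:=\Psi_1(L_2)$ is an invariant line transverse to $\{v=0\}$ at the origin; if $L=\{u=0\}$ we are done, so assume not and parametrize $L$ by $(f,g)$ as before. Since $L\cap\{v=0\}=\{0\}$ the polynomial $g$ vanishes only at $t=0$, so $g(t)=b\,t^r$, and transversality forces $g'(0)\ne 0$, i.e.\ $r=1$; after rescaling $t$ we may write $L=\{(h(v),v)\}$ for a polynomial $h$ with $h(0)=0$. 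From $g(\omega t)=\zeta^e g(t)$ with $g(t)=t$ we read $\omega=\zeta^e$, so every exponent $j$ of $h$ satisfies $\zeta^{ej}=\omega^j=\zeta$, i.e.\ $ej\equiv 1\pmod d$ and $h(v)\in A_{[1]}$. Hence $\Psi_2\colon(u,v)\mapsto(u-h(v),v)$ is an equivariant automorphism fixing $\{v=0\}$ pointwise and sending $L$ to $\{u=0\}$, and $\Psi_2\circ\Psi_1$ carries $L_1\cup L_2$ onto $\{uv=0\}$.

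I expect the one genuinely delicate point to be the reduction step: one has to feed the Abhyankar--Moh divisibility ($\deg f\mid\deg g$ or conversely) into the congruence $\deg g\equiv e\,\deg f\pmod d$ --- which is available only because invariance of $L$ gives $\gcd(\deg f,d)=1$ --- in order to conclude that the monomial $u^k$ (resp.\ $v^k$) appearing in the reduction has precisely the $\Z_d$-weight that makes the elementary automorphism equivariant. Everything else is routine: the composition of equivariant automorphisms is equivariant, and the reduction terminates because a sum of two positive integers cannot decrease forever. A minor subtlety worth flagging in the ``cross'' statement is that ``cross'' must be read as a \emph{transverse} intersection: automorphisms preserve intersection multiplicity, so a tangential pair of invariant lines can never be straightened to $\{uv=0\}$, and it is transversality that supplies the degree-one parametrization used in the last paragraph.
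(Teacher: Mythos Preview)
Your argument is correct, but it takes a genuinely different route from the paper's. You parametrize $L$ and run the classical Abhyankar--Moh reduction step by step, using the degree-divisibility form of the theorem together with the congruence $\deg g\equiv e\,\deg f\pmod d$ (which, as you note, relies on $\gcd(\deg f,d)=1$) to see that each elementary automorphism $(u,v)\mapsto(u,v-cu^k)$ or $(u,v)\mapsto(u-cv^k,v)$ occurring in the reduction is automatically $\Z_d$-equivariant. The paper instead uses Abhyankar--Moh in its ``straightening'' form to write $L=\{p=0\}$ with $p$ one component of an automorphism, then obtains the second component by a Reynolds-type averaging: starting from an arbitrary section of the trivial $\C$-bundle $p\colon\C^2\to\C$, one averages its $\Z_d$-translates to get an invariant section $\{q=0\}$; the pair $(p,q)$ is then an automorphism whose components both have invariant zero sets, hence are $\Z_d$-homogeneous (Lemma~\ref{lemhomog}), and comparing the induced $\Z_d$-representation on $\C^2$ forces their weights to be $\{[1],[e]\}$, so $(p,q)$ or $(q,p)$ is equivariant.

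What each approach buys: the paper's averaging argument is shorter and more conceptual, and yields the equivariant automorphism in one stroke rather than as an iterated composition. Your approach is more explicit and constructive --- it shows in particular that the straightening automorphism may be taken to be a product of \emph{equivariant} Jonqui\`eres maps, a fact the paper obtains only later (via \cite{ArZa}) when it is actually needed. For the cross statement the two arguments essentially agree: the paper just remarks that the second invariant line already furnishes the invariant section, which is exactly what your explicit computation with $h(v)\in A_{[1]}$ spells out. Your caveat that ``cross'' must mean a transverse single intersection is appropriate; the paper leaves this implicit.
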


\begin{proof}
  By the classical Abhyankar-Moh Theorem we know that $L$ is given by
  a polynomial $p$ which is a component of an automorphism of $\C^2$.
  In order to find the other component of the automorphism we have to
  find an invariant section of the trivial line bundle given by
  $p$. We start with an arbitrary trivialization and get an invariant
  section taking the average over images of the zero section by the
  group action. Each image is another section because the action sends
  fibers of $p$ to fibers of $p$ since the zero fiber is invariant. We
  denote the polynomial giving this invariant section by $q$. The map
  given by $(p,q)$ is an automorphism of $\C^2$ since it is the
  composition of the trivialization we started with and the map
  $(u,v)\mapsto (u,v -s(u))$ where $s$ is the invariant
  section. Because the zero sets of $p$ and $q$ are invariant they are
  $\Z_d$-homogeneous by Lemma \ref{lemhomog} and since they are the
  two components of an automorphism their homogeneity degrees coincide
  with $[1]$ and $[e]$ so either $(p,q)$ or $(q,p)$ is an equivariant
  automorphism and the claim follows. The second statement is trivial
  since there we already have an invariant section by assumption.
\end{proof}
We get the following corollary as an immediate consequence, see also
\cite{FlKaZa}.
\begin{corollary} \label{c-fiber}
 Let $f:\C^2\rightarrow\C$ be a $\Z_d$-homogeneous fibration with $\C$ fibers and $f(0,0)=0$ then $f(u,v)=u$ or $f(u,v)=v$ up to equivariant automorphism of
$\C^2$.
\end{corollary}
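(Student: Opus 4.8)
The plan is to rectify $f$ in two stages: use that $f$ is a $\C$-fibration to see that its \emph{invariant} zero fiber is a coordinate line, straighten that line $\Z_d$-equivariantly via Lemma~\ref{lemabhy}, and then read off from the homogeneity that $f$ itself must be the corresponding coordinate.

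First I would note that $f^{-1}(0)$ is $\Z_d$-invariant: this is Lemma~\ref{lemhomog} applied to the $\Z_d$-homogeneous function $f$. Next, because $f\colon\C^2\to\C$ is a fibration with general fiber $\C$, the polynomial $f$ is irreducible. Indeed, if $f=f_1f_2$ with both factors nonconstant, then on a general fiber $F\cong\C$ the restriction $f_1|_F$ is a nowhere-vanishing polynomial map $\C\to\C^*$, hence constant; so $f_1$ is constant on the general fibers of $f$, which after comparing degrees forces $f_1=\alpha f+\beta$ with $\alpha\neq0$, and then $f_1\mid f$ is impossible. Hence $f^{-1}(0)$ is a reduced irreducible plane curve containing the fixed point $(0,0)$. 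Moreover, by the structure theory of $\C$-fibrations on $\C^2$ (equivalently, a polynomial on $\C^2$ with general fiber $\C$ becomes a linear coordinate after an automorphism of $\C^2$; cf.~\cite{FlKaZa}) every fiber of $f$, in particular $f^{-1}(0)$, is isomorphic to $\C$.

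Now $f^{-1}(0)$ is a $\Z_d$-invariant line $\cong\C$, so Lemma~\ref{lemabhy} furnishes an equivariant automorphism $\Phi$ of $\C^2$ with $\Phi(f^{-1}(0))$ equal to $\{u=0\}$ or $\{v=0\}$. Replacing $f$ by $f\circ\Phi^{-1}$, which is again $\Z_d$-homogeneous since $\Phi$ is equivariant, we may assume the zero set of $f$ is $\{u=0\}$ (the other case being symmetric). As $f$ is reduced this means $f=cu$ for some $c\in\C^*$, and composing further with the diagonal scaling $(u,v)\mapsto(c^{-1}u,v)$ — which commutes with the $\Z_d$-action, hence is equivariant — gives $f=u$, as claimed.

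The only input beyond the already-established Lemma~\ref{lemabhy} is the fact that the \emph{special} fiber $f^{-1}(0)$ of a plane $\C$-fibration is isomorphic to $\C$ (not merely homeomorphic to it), which is precisely what makes Lemma~\ref{lemabhy} applicable; I expect this to be the one point worth citing carefully rather than reproving, the rest of the argument being formal.
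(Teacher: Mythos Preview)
Your argument is correct and is exactly the unpacking of what the paper calls an ``immediate consequence'' of Lemma~\ref{lemabhy}: the invariant zero fiber is a line by Abhyankar--Moh--Suzuki, Lemma~\ref{lemabhy} straightens it equivariantly, and irreducibility then forces $f=cu$ (or $cv$). Your separate irreducibility paragraph is redundant once you invoke the structure theorem (a coordinate is already irreducible), and its degree-comparison step silently assumes $f_1\in\C[f]$, which you have not justified; you may simply delete that paragraph.
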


For the classification of $\Z_d$-homogeous fibration with $\C^*$
fibers we first state the non-equivariant version used in \cite{Br3},
see also \cite{Su}.

\begin{lemma}\label{lemcstarfib}
  Let $f:\C^2\rightarrow\C$ be a fibration with $\C^*$ fibers then
  $f(x,y)$ has one special fiber (say $f^{-1}(0)$) and it is
  isomorphic to $\C\cup\C^*$ or $\lbrace xy=0\rbrace$ and $f$ is up to
  automorphism of $\C^2$ of the form $f(x,y)=x^m(x^ly + p(x))^n$ or
  $f(x,y)=x^my^n$ for coprime $m,n\in\N$, $\deg p<l\geq1$ and
  $p(0)\neq 0$.
\end{lemma}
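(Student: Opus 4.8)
The statement is the non-equivariant classification of regular fibrations $f:\C^2\to\C$ whose general fiber is $\C^*$, and it is attributed to Brunella (and Suzuki). So the natural approach is to derive it from known structure theory for $\C^*$-fibrations on $\C^2$ rather than to prove it from scratch; I would treat Brunella's and Suzuki's results as the available input and organize the argument as a reduction.

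The plan is as follows. First I would recall that a general fiber of $f$ is isomorphic to $\C^*$ and that, by the theory of polynomial fibrations of $\C^2$ (see \cite{Su}, \cite{Br3}), such an $f$ has at most one singular (non-general) fiber; place it at $f^{-1}(0)$. Next I would analyze $f^{-1}(0)$: since $\C^2$ is contractible and the general fiber is $\C^*$, an Euler-characteristic / homology count on the pencil defined by $f$ forces the special fiber to be a union of two irreducible curves (each a copy of $\C$), meeting either transversally in one point — giving $\{xy=0\}$ — or arranged so the total space near it degenerates to $\C\cup\C^*$; this is exactly the dichotomy in the statement. Then, using the classical Abhyankar–Moh–Suzuki theory for embedded curves in $\C^2$, I would straighten the components of $f^{-1}(0)$: in the $\{xy=0\}$ case an automorphism of $\C^2$ brings $f$ to a monomial $x^my^n$ with $\gcd(m,n)=1$ (coprimality because $f$ is the reduced defining pencil, i.e. its general fiber is connected); in the other case one component becomes $\{x=0\}$ and a further automorphism preserving $\{x=0\}$ (of the form $(x,y)\mapsto(x, a(x)y+b(x))$ composed with $y\mapsto y$–translations) normalizes the second component to $x^l y + p(x)$, yielding $f(x,y)=x^m(x^ly+p(x))^n$ with $\gcd(m,n)=1$, $l\ge 1$, $\deg p<l$, and $p(0)\ne 0$ (the last condition because otherwise $x$ would divide $x^ly+p(x)$ and the two "components" would not be distinct).

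The step I expect to be the real obstacle is the topological/combinatorial analysis of the special fiber — showing that exactly the two listed configurations occur and no others, and extracting the precise numerical constraints ($\gcd(m,n)=1$, $\deg p < l$, $p(0)\neq 0$, $l\geq 1$). This is where one must invoke the finer structure of $\C^*$-fibrations (dicritical versus non-dicritical behavior at infinity, the shape of the fiber at infinity on a minimal compactification) rather than soft arguments; all the genuine content of \cite{Su} and \cite{Br3} is concentrated there. The straightening steps afterwards are routine applications of Abhyankar–Moh and of the description of automorphisms of $\C^2$ stabilizing a coordinate line, so I would present those briefly and spend the bulk of the argument citing and applying the $\C^*$-fibration classification to pin down the special fiber.
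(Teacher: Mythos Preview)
The paper does not actually prove this lemma: it is stated as the ``non-equivariant version used in \cite{Br3}, see also \cite{Su}'' and then used as a black box in the subsequent equivariant lemmas. So your instinct to treat it as input from the literature is exactly what the paper does, and in that sense your proposal matches the paper's approach.

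That said, your sketch of how the proof \emph{would} go contains a genuine confusion about the $\C\cup\C^*$ case. You describe the special fiber as ``a union of two irreducible curves (each a copy of $\C$)'' in both cases, differing only in how they meet. This is wrong: in the $\C\cup\C^*$ case the two components are \emph{disjoint}, one isomorphic to $\C$ and the other to $\C^*$. Concretely, for $f(x,y)=x^m(x^ly+p(x))^n$ with $p(0)\neq 0$, the zero set is $\{x=0\}\cup\{x^ly+p(x)=0\}$; the second curve does not meet $\{x=0\}$ (plug in $x=0$ to get $p(0)\neq 0$) and is the graph of $y=-p(x)/x^l$ over $\C^*$, hence isomorphic to $\C^*$, not $\C$. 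Consequently your straightening step---applying Abhyankar--Moh to both components---does not go through as written, since Abhyankar--Moh classifies embeddings of $\C$, not $\C^*$. The $\C$-component can indeed be sent to $\{x=0\}$ (this is how the paper argues in Lemma~\ref{cstar-fiber-1}), but normalizing the $\C^*$-component to the form $\{x^ly+p(x)=0\}$ requires the actual classification of $\C^*$-embeddings in $\C^2$ from Suzuki's work, not a soft Jonqui\`eres reduction. You correctly flag this region of the argument as the hard part, but the description you give of it is off.
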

The equivariant version of this lemma is given by the two following
lemmas.
\begin{lemma} \label{cstar-fiber-1}
  Let $f:\C^2\rightarrow \C$ be a $\Z_d$-homogeneous fibration with
  $\C^*$ fibers and $f^{-1}(0)\cong\C\cup\C^*$ then there are coprime
  $m,n\in\N$ and an invariant polynomial $p$ with $\deg p <l\geq1$ and
  $p(0)\neq 0$ such that up to equivariant automorphism
  $f(u,v)=u^m(u^lv + p(u))^n$ with $[l+e]=[0]$ or
  $f(u,v)=v^m(v^lu+p(v))^n$ with $[1+le]=[0]$.
\end{lemma}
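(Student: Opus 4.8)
The plan is to start from the non-equivariant normal form provided by Lemma~\ref{lemcstarfib}, namely $f(x,y)=x^m(x^l y + p(x))^n$ with $\deg p<l$, $p(0)\neq 0$ and $\gcd(m,n)=1$, after applying an automorphism of $\C^2$ that need not be equivariant. The special fiber $f^{-1}(0)$ is $\{x=0\}\cup\{x^l y + p(x)=0\}$, a cross of two curves, one of which is a line $\{x=0\}$ and the other a curve isomorphic to $\C^*$. Since $f$ is $\Z_d$-homogeneous, Lemma~\ref{lemhomog} tells us $f^{-1}(0)$ is $\Z_d$-invariant; hence the $\Z_d$-action permutes its two irreducible components, and since the line is the unique component isomorphic to $\C$, the line $\{x=0\}$ is itself $\Z_d$-invariant. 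Then I would invoke Lemma~\ref{lemabhy} to find an equivariant automorphism sending this line to $\{u=0\}$; composing with it, we may assume $f^{-1}(0)$ has $\{u=0\}$ as one component, so that $f=u^m g^n$ with $g$ an invariant (hence $\Z_d$-homogeneous by Lemma~\ref{lemhomog}, because $g^{-1}(0)$ is $\Z_d$-invariant) reduced polynomial cutting out the $\C^*$-component.

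Next I would pin down the shape of $g$. After the equivariant change of coordinates we still have the non-equivariant normal form at our disposal, so $g=u^{l'} v + p(u)$ up to a further automorphism of $\C^2$ fixing $\{u=0\}$; the point is to make that automorphism equivariant. Writing $g$ in terms of its $\Z_d$-weight-$[1]$ variable $u$ and weight-$[e]$ variable $v$: since $g$ is $\Z_d$-homogeneous and the $v$-fiberwise structure of $g=0$ over $\C^*_u$ is that of a graph $v=-p(u)/u^{l}$, one reads off that $g$ is linear in $v$, i.e.\ $g(u,v)=a(u)v+b(u)$ with $a,b\in\C[u]$. Equivariance of the coordinate change amounts to choosing an invariant trivialization of the line bundle $\{g=0\}\to\{u=0\}$, which again is obtained by averaging a section over the $\Z_d$-action exactly as in the proof of Lemma~\ref{lemabhy}. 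This produces $g(u,v)=u^l v+p(u)$ with $\deg p<l$, $l\geq1$, $p(0)\neq0$, and after rescaling we may take the leading coefficient of $a(u)$ to be $1$.

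Finally I would extract the congruence conditions. The polynomial $g=u^l v+p(u)$ is $\Z_d$-homogeneous of some degree $[k]$; the monomial $u^l v$ has $\Z_d$-degree $[l+e]$, so $[k]=[l+e]$, while $p(u)$ is a polynomial in $u$ alone with $p(0)\neq0$ — if $p$ were non-constant its terms would have distinct $\Z_d$-degrees, forcing $p$ to be a single monomial $u^j$ with $[j]=[l+e]$; in particular the constant term $p(0)\neq0$ is only compatible with $[l+e]=[0]$, which also makes $p$ genuinely a polynomial (every monomial $u^j$ occurring satisfies $[j]=[0]$). Then $f=u^m g^n$ is automatically $\Z_d$-homogeneous. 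The symmetric alternative $f(u,v)=v^m(v^lu+p(v))^n$ with $[1+le]=[0]$ arises when in the first paragraph the equivariant Abhyankar--Moh lemma sends the invariant line to $\{v=0\}$ instead of $\{u=0\}$; the computation is identical with the roles of $u$ (weight $[1]$) and $v$ (weight $[e]$) interchanged, giving the condition $[1+le]=[0]$.

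\medskip

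\noindent\textbf{Main obstacle.} The delicate point is upgrading the automorphism of $\C^2$ that achieves the Suzuki/Brunella normal form $f=u^m(u^lv+p(u))^n$ to an \emph{equivariant} one. The first reduction (straightening the invariant line) is handled cleanly by Lemma~\ref{lemabhy}, but after that one must show the remaining coordinate change $v\mapsto$ (affine function of $v$ over $\C[u]$) can be chosen $\Z_d$-equivariantly; this requires the averaging-of-sections trick and a check that the $\Z_d$-action, which already preserves both components of $f^{-1}(0)$ and fixes the origin, acts on the line-bundle $\{g=0\}\to\{u=0\}$ in a way compatible with producing an invariant polynomial section of the right weight. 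Keeping careful track of the $\Z_d$-weights through each coordinate change — in particular ensuring the final $u$ and $v$ still carry weights $[1]$ and $[e]$ respectively — is where the bookkeeping must be done with care, and it is exactly this bookkeeping that yields the congruences $[l+e]=[0]$ and $[1+le]=[0]$.
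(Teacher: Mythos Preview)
Your strategy matches the paper's: use Lemma~\ref{lemcstarfib} for the non-equivariant normal form, observe that the unique $\C$-component of $f^{-1}(0)$ is $\Z_d$-invariant, straighten it via Lemma~\ref{lemabhy}, and then upgrade the remaining coordinate change to an equivariant one. The beginning and the end (extracting $[l+e]=[0]$ from $p(0)\neq 0$) are fine.

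The middle section, however, is more laboured than it needs to be, and the ``averaging / line bundle'' paragraph is where you go slightly astray. Once the equivariant Abhyankar--Moh step has placed the $\C$-component at $\{u=0\}$, the non-equivariant automorphism $(x(u,v),y(u,v))$ furnished by Lemma~\ref{lemcstarfib} now satisfies $\{x=0\}=\{u=0\}$. Any automorphism of $\C^2$ whose first component vanishes exactly on $\{u=0\}$ is automatically Jonqui\`eres: $x=au$ with $a\in\C^*$, and then $y=bv+q(u)$ with $b\in\C^*$, $q\in\C[u]$. So $f=(au)^m\big((au)^l(bv+q(u))+p(au)\big)^n$ already in the current $(u,v)$-coordinates. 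Now $\Z_d$-homogeneity of $f$ forces the bracket to be $\Z_d$-homogeneous of degree $[l+e]$; since $\deg p<l$, the pieces $p(au)$ and $(au)^lq(u)$ live in disjoint $u$-degree ranges, so individually $p\in A_{[l+e]}$ and $q\in A_{[e]}$. Thus $(x,y)=(au,bv+q(u))$ is equivariant \emph{after all}, with no averaging needed. This is exactly the paper's argument, and it bypasses your ``main obstacle'' entirely.

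Two small inaccuracies to fix: the phrase ``line bundle $\{g=0\}\to\{u=0\}$'' does not typecheck (the $\C^*$-component does not fiber over the line $\{u=0\}$), and the claim that a non-constant $p$ would be forced to be a single monomial is wrong---$p$ can have several terms provided they all lie in $A_{[l+e]}=A_{[0]}$, which is precisely the statement that $p$ is an invariant polynomial.
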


\begin{proof}
  By Lemma~\ref{lemcstarfib} we know that there exists a not necessary
  equivariant automorphism $(x(u,v),y(u,v))$ such that $f(x,y)$ is as
  in Lemma~\ref{lemcstarfib}. Clearly, the curve $\C\cong C \subset
  f^{-1}(0)$ is invariant by the group action since it is the only
  fiber component isomophic to $\C$. By Lemma \ref{lemabhy} we may
  assume the $C = \lbrace u=0\rbrace$ or $C=\lbrace v=0\rbrace$. In
  the first case this implies that, up to equivariant automorphism,
  $x(u,v)=au$ and $y(u,v)=bv + q(u)$ for some $a,b\in\C^*$ and
  $q\in\C[u]$ and hence $f$ is of the form $(au)^m((au)^l(bv+q(u)) +
  p(u))^n$ with $\deg p<l$. Since $f$ is $\Z_d$-homogeneous we have
  $q\in A_{[e]}$ and $p\in A_{[l+e]}$ hence the map $(x(u,v),y(u,v))$
  was equivariant after all and $f$ has the desired standard form up
  to equivariant automorphism. The equality $[l+e]=[0]$ follows from
  the fact that $p(0)\neq 0$. The case $C=\lbrace v=0\rbrace$ leads
  similarly to the second possibility.
\end{proof}

\begin{lemma} \label{cstar-fiber-2} Let $f:\C^2\rightarrow \C$ be a
  $\Z_d$-homogeneous fibration with $\C^*$ fibers and
  $f^{-1}(0)\cong\lbrace uv=0\rbrace$ then there are coprime
  $m,n\in\N$ such that $f(u,v)=u^mv^n$ up to equivariant
  automorphism. If $d$ is divisible by 4 (say $d=4d'$) and $e=2d'+1$
  then $f$ can also be of the form $f(u,v)=u^2-v^2$.
\end{lemma}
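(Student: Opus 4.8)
The plan is to combine the classification of $\C^*$-fibrations with $\{xy=0\}$ as special fiber (Lemma \ref{lemcstarfib}) with the equivariant Abhyankar--Moh statement (Lemma \ref{lemabhy}), much as in the proof of Lemma \ref{cstar-fiber-1}. First I would observe that the special fiber $f^{-1}(0)\cong\{uv=0\}$ is a cross of two lines, each of which is $\Z_d$-invariant (the two branches are the only irreducible components of the special fiber, and the $\Z_d$-action must permute them; since it fixes the node $(0,0)$ and fixes the tangent directions up to sign, one has to rule out that the two branches are swapped). If no branch is swapped, Lemma \ref{lemabhy} lets me assume $f^{-1}(0)=\{uv=0\}$ in equivariant coordinates, and then Lemma \ref{lemcstarfib} forces $f(u,v)=u^mv^n$ with $m,n$ coprime after a (possibly non-equivariant) automorphism; but the automorphism bringing $f$ to this monomial form is seen to be equivariant by the same averaging argument as in Lemma \ref{cstar-fiber-1}, using that the two coordinate axes are the invariant lines $\{u=0\}$ and $\{v=0\}$. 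This yields the generic conclusion $f(u,v)=u^mv^n$.

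The interesting — and harder — case is when the $\Z_d$-action swaps the two branches of $f^{-1}(0)$. Then there is no invariant line among the components of the special fiber, so Lemma \ref{lemabhy} does not apply directly. Instead I would work with the two branches $C_1,C_2$ which are exchanged by a generator $\zeta$ of $\Z_d$; the subgroup fixing each branch has index two, so $d$ is even, $d=2d_0$, and the element $\zeta^{d_0}$ of order two acts on each branch. I would pick a coordinate $x$ on $\C^2$ with $C_1=\{x=0\}$ (via classical Abhyankar--Moh) and transport it by $\zeta$ to get $y$ with $C_2=\{y=0\}$; then $(x,y)$ is a coordinate system in which $\zeta$ acts by $(x,y)\mapsto(\lambda y,\mu x)$ for scalars $\lambda,\mu$, and $f$ is a monomial $x^m y^n$ in these coordinates. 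Invariance of $f$ forces $m=n$, and coprimality then gives $m=n=1$, so $f=xy$; writing $u,v$ for the original (diagonalizing) coordinates one has $x=\alpha u+\beta v$, $y=\gamma u+\delta v$ with the swap condition pinning down the shape, and $f=xy$ becomes a quadratic form in $u,v$. Tracking the weights $[1]$ and $[e]$ of $u,v$ under the $\Z_d$-action: the product $xy$ is invariant, and the swap forces $\{[1],[e]\}$ to be exchanged by $\zeta^{d_0}$, i.e. $[2e]=[2]$ but $[e]\ne[1]$, which combined with $0<e<d$ gives $e-1=d_0=d/2$ or $e+1=d$; a short case analysis of these congruences produces exactly $d=4d'$, $e=2d'+1$, and in that case $f(u,v)$ reduces, after rescaling $u,v$, to $u^2-v^2$ (the two invariant lines being $\{u=v\}$ and $\{u=-v\}$, which are swapped by $\zeta^{d_0}$).

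The main obstacle I expect is the bookkeeping in the swapped-branches case: one must verify carefully that swapping the branches is possible only for this single arithmetic family $(d,e)=(4d',2d'+1)$, and that in every other situation the action fixes each branch so the first case applies. Concretely the delicate point is showing that the diagonal action $\zeta\cdot(u,v)=(\zeta u,\zeta^e v)$ can exchange two $\Z_d$-invariant lines through the origin precisely when $e^2\equiv 1\pmod d$ together with the non-triviality constraint that forces $e\equiv -1\pmod{d/2}$ but $e\not\equiv\pm1\pmod d$ in a way compatible with $0<e<d$ — this is where the hypothesis $4\mid d$ enters. Once this arithmetic is settled, the identification $f=u^2-v^2$ follows by normalizing the quadratic form, and the two $\C^*$ fibers of $u^2-v^2$ away from $0$ together with the node structure at $0$ match the required fibration type, completing the proof.
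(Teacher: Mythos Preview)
Your overall two-case split (branches fixed vs.\ branches swapped) and the non-swapped case match the paper's argument. In the swapped case, however, there is a genuine gap. You propose to pick $x$ with $C_1=\{x=0\}$ via classical Abhyankar--Moh and then set $y:=\zeta^*x$, asserting that $(x,y)$ is a coordinate system on which $\zeta$ acts by a scalar swap. But $(x,\zeta^*x)$ need not be an automorphism of $\C^2$: for instance, if the swap were $(u,v)\mapsto(v,u)$ and $x=u+v^2$, then $y=v+u^2$ and the Jacobian of $(x,y)$ is $1-4uv$, not constant. You also assert that $x,y$ are linear in the original diagonalizing coordinates $u,v$, which is unjustified; the coordinates $x,y$ coming from Lemma~\ref{lemcstarfib} can be high-degree polynomials in $u,v$.

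The paper avoids both problems by starting instead from the automorphism $(x,y)$ supplied by Lemma~\ref{lemcstarfib} with $f=x^my^n$. Since $\{x=0\}$ and $\{y=0\}$ are the components of the invariant zero fiber and each is a line, the swap forces $\zeta^*x=ay$ and $\zeta^*y=bx$ for scalars $a,b$ (after rescaling, $a=b$); homogeneity of $f$ then gives $m=n=1$ as you say. The key step you are missing is that $P_\pm:=x\pm y$ are then automatically $\Z_d$-homogeneous (because $\zeta^*P_\pm=\pm aP_\pm$) and form an automorphism of $\C^2$, hence by the same reasoning as in Lemma~\ref{lemabhy} one may take $\{P_+,P_-\}=\{u,v\}$ after an equivariant automorphism. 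This yields $f=xy=\tfrac14(P_+^2-P_-^2)=\tfrac14(u^2-v^2)$ directly, and $\Z_d$-homogeneity of $u^2-v^2$ forces $[2]=[2e]$ in $\Z_d$, i.e.\ $e=1$ or $e=d_0+1$ with $d=2d_0$; coprimality of $e$ and $d$ then forces $d_0$ even, giving $d=4d'$ and $e=2d'+1$. Your arithmetic endpoint is correct, but the conditions you write down ($e^2\equiv 1$, $e\equiv -1\bmod d/2$) are consequences rather than the natural starting point; the clean derivation comes from the homogeneity constraint $[2]=[2e]$ once $f=u^2-v^2$ is established.
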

\begin{proof}
  By Lemma \ref{lemcstarfib} there is an automorphism
  $(x(u,v),y(u,v))$ such that $f=x^my^n$. Clearly the 0-fiber $\lbrace
  x(u,v)=0\rbrace \cup \lbrace y(u,v)=0 \rbrace$ is invariant by the
  group action. If the two lines are invariant themselves then by
  Lemma \ref{lemabhy} we may assume that they coincide with $\lbrace
  uv=0\rbrace$ and hence we may assume $x(u,v)=au$ and $y(u,v)=bv$ for
  some $a,b\in\C^*$ and we are done. If the two lines are interchanged
  by the group action then we have $d=2d_0$ is even and
  $$ x(\zeta\cdot(u,v))= ay(u,v) \quad \mathrm{and}\quad 
  y(\zeta\cdot(u,v)=bx(u,v)$$ %
  for some $a,b \in \C^*$. After rescaling we may assume that
  $a=b$. The fibration $f=x^my^n$ is $\Z_d$-homogeneous so
  $x^my^n=const \cdot y^mx^n$ and hence $m=n=1$. Moreover we have
  $x(u,v)=x(\zeta^d\cdot(u,v))=a^dx(u,v)$ and hence $a=\zeta^i$ for
  some $i$. We see that the maps $P_\pm(u,v)=x(u,v)\pm y(u,v)$ are
  $\Z_d$-homogeneous and since they are the components of an
  automorphism of $\C^2$ we may assume that the functions $P_\pm$
  coincides with the functions $u$ and $v$. Altogether we have
  $\frac{1}{4}f(u,v)=(u+v)(u-v)=u^2-v^2$ which is $\Z_d$-homogeneous
  only if $2e=2$ or $2e=2d_0+2$. In the first case $(x(u,v),y(u,v))$ is
  already equivariant so only the latter case remains. Since $d$ is even and thus $e=d_0+1$ is odd we have that $d_0=2d'$ is even.
\end{proof}

\begin{lemma}\label{lemfirstint}
  Let $f: \C^2 \dashrightarrow \P^1$ be a reduced rational first
  integral of an invariant complete vector field $\partial$ on $\C^2$
  then up to equivariant automorphism of $\C^2$ and M\"obius transform
  of $\P^1$ the rational function $f$ is a $\Z_d$-homogeneous
  polynomial with $\C$ or $\C^*$ fibers or there are coprime
  $m,n\in\N$ such that $f(u,v)=u^m/v^n$.
\end{lemma}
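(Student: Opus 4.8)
The plan is to analyze the reduced rational first integral $f\colon\C^2\dashrightarrow\P^1$ directly, using the fact that $f$ being a first integral of the invariant complete field $\partial$ forces the fiber structure of $f$ to interact with the $\Z_d$-action in a controlled way. First I would recall the general structure of a reduced rational function $f$ on $\C^2$: after resolving the indeterminacy by a sequence of blow-ups, the fibers of $f$ become (connected) curves in a rational surface, and the generic fiber is a smooth affine curve. Because $\partial$ is complete and $f$ is a first integral, the generic fiber carries a complete algebraic vector field, hence (as in Brunella's analysis, which is the blueprint advertised before the lemma) the generic fiber of $f$ must be $\C$, $\C^*$, or an elliptic curve; but since $f$ has a rational (not merely meromorphic) value and $\partial$ is complete on all of $\C^2$, the elliptic case is excluded, so the generic fiber is $\C$ or $\C^*$. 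This is exactly the dichotomy that makes the two conclusions of the lemma plausible.

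Next I would split into the two cases according to the generic fiber. If the generic fiber of $f$ is $\C$, I would argue that $f$ has at most one non-reduced or non-$\C$ fiber and, combining the Abhyankar–Moh-type normal form (Corollary~\ref{c-fiber}) with invariance, that after an equivariant automorphism of $\C^2$ (and a M\"obius change on $\P^1$ moving a base point to $\infty$) $f$ becomes a genuine polynomial fibration with $\C$ fibers; then $f^{-1}(0)$ is invariant (the group permutes fibers of the invariant field's first integral, and after the M\"obius normalization the $0$-fiber is the distinguished one through the fixed point), so $f$ is $\Z_d$-homogeneous by Lemma~\ref{lemhomog}. If the generic fiber is $\C^*$, the same reasoning together with Lemma~\ref{lemcstarfib} shows that $f$ is, up to automorphism, a $\C^*$-fibration; I must then promote ``automorphism'' to ``equivariant automorphism.'' The special fiber $f^{-1}(0)$ is $\Z_d$-invariant, and by Lemmas~\ref{cstar-fiber-1} and~\ref{cstar-fiber-2} the equivariant normal form is either a $\Z_d$-homogeneous polynomial or the one genuinely rational exception $f(u,v)=u^m/v^n$, which arises precisely when the two branches $\{u=0\}$ and $\{v=0\}$ of the degenerate fiber are the two poles/zeros and the M\"obius normalization cannot simultaneously send both to a finite value — this is where the ratio $u^m/v^n$ is forced rather than a polynomial.

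The main obstacle I anticipate is the bookkeeping needed to pass from a not-necessarily-equivariant automorphism (supplied by the non-equivariant Brunella/Suzuki normal forms, Lemmas~\ref{lemcstarfib} and~\ref{lemabhy}) to an equivariant one, while simultaneously tracking how a M\"obius transform of $\P^1$ is allowed to move special fibers: one has to check that every special fiber of $f$ (there is essentially only one, besides the fiber over $\infty$) is $\Z_d$-invariant, invoke Lemma~\ref{lemhomog} to get $\Z_d$-homogeneity of the defining components, and then match homogeneity degrees with the degrees $[1]$ and $[e]$ that an equivariant coordinate pair must have — exactly the matching already carried out in the proofs of Lemmas~\ref{cstar-fiber-1} and~\ref{cstar-fiber-2}. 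Once that is in place, the enumeration of normal forms is a direct appeal to Corollary~\ref{c-fiber} and Lemmas~\ref{cstar-fiber-1}, \ref{cstar-fiber-2}, and the only case producing a truly rational first integral is $f=u^m/v^n$, completing the proof.
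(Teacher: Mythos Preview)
Your plan organizes the argument by the topological type of the generic fiber ($\C$ versus $\C^*$), but this is not the dichotomy that drives the proof, and it leads you to misapply several lemmas. Corollary~\ref{c-fiber} and Lemmas~\ref{cstar-fiber-1}, \ref{cstar-fiber-2} all take as \emph{input} a $\Z_d$-homogeneous \emph{polynomial} fibration; they cannot be invoked on a rational $f$ to show that $f$ is a polynomial, and they never output the rational form $u^m/v^n$. So your $\C^*$-fiber branch is circular at the point where you appeal to these lemmas, and your explanation of how $u^m/v^n$ ``arises'' is not a proof. Moreover, even in the polynomial case the lemma only asks you to show that $f$ is $\Z_d$-homogeneous with $\C$ or $\C^*$ fibers; the further normal forms of Corollary~\ref{c-fiber} and Lemmas~\ref{cstar-fiber-1}, \ref{cstar-fiber-2} are not needed here (they are used later, in Theorem~\ref{thmlist}).

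The paper's proof uses the correct dichotomy: whether $f\colon\C^2\dashrightarrow\P^1$ is surjective or not. One first observes that general fibers of $f$ are orbit closures of the $\C$-flow of $\partial$, hence $\C$ or $\C^*$, and that the $\Z_d$-action permutes the fibers, so $f$ is $\Z_d$-preserved. If $f$ is not surjective, a M\"obius change makes it a polynomial, and then Lemma~\ref{lemhomog} gives $\Z_d$-homogeneity directly. If $f$ \emph{is} surjective, one needs an input you do not mention: the result of Brunella and Suzuki that a reduced surjective rational first integral of a complete field on $\C^2$ is $x^m/y^n$ for some (not necessarily equivariant) automorphism $(x,y)$. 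One then argues that two fibers of $f$ (the $0$- and $\infty$-fiber) are $\Z_d$-invariant lines intersecting transversally, and applies the equivariant Abhyankar--Moh Lemma~\ref{lemabhy} to send them to $\{u=0\}$ and $\{v=0\}$. Without the surjective/non-surjective split and the Brunella--Suzuki structure result for the surjective case, your plan does not reach $u^m/v^n$.
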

\begin{proof}
  A general fiber of $f$ is an orbit closure of the flow of
  $\partial$. Since $\partial$ is invariant the set of orbits is preserved
  by the $\Z_d$-action hence general fibers of $f$ are mapped to
  general fibers of $f$ by the action and the action induces a
  $\Z_d$-action on the base $\P^1$. Altogether this means that $f$ is
  $\Z_d$-preserved. If $f$ is not surjective then $f$ can be seen as a
  polynomial which is $\Z_d$-homogeneous by Lemma \ref{lemhomog} and
  has general fibers isomomorphic $\C$ or $\C^*$ since they are orbit
  closures.

  Now consider the case $f$ surjective.  As mentioned in \cite{Br3}
  and \cite{Su} such a first integral is always of the form
  $f=x^m/y^n$ for some automorphism $(x(u,v),y(u,v))$. The
  $\Z_d$-action on the base $\P^1$ is either trivial (and hence $f$ is
  $\Z_d$-invariant) or it has exactly two fixed points (so two fibers
  of $f$ are $\Z_d$-invariant). In both cases there are two invariant
  fibers intersecting transversally (say the $0$- and the
  $\infty$-fiber). Indeed if $m=n=1$ all fibers intersect transversely
  and if $m\neq n$ all but one fiber intersect pairwise tangentially
  so this fiber is clearly invariant and it intersects all other
  fibers transversally. By Lemma \ref{lemabhy} we may assume that
  these two fibers coincides with $\lbrace u = 0\rbrace$ and $\lbrace
  v = 0\rbrace$ and hence $x(u,v)=au$ and $y(u,v)=bv$ or vice versa.
\end{proof}

\begin{theorem} \label{thmlist} Let $\partial$ be a complete algebraic
  vector field on $\C^2$ which is invariant by the group action given
  by $\zeta\cdot(u,v) = (\zeta u,\zeta^e v)$ where $\zeta$ is a
  primitive $n$-th root of unity and $0<e<d$ coprime numbers. Then
  $\partial$ has, up to equivariant automorphism of $\C^2$, one of the
  forms in the following list. 
\begin{enumerate}\itemsep8pt
\item [] \item \begin{enumerate} \itemsep8pt
   \item $\displaystyle{ \partial=au\frac {\partial}{\partial u} +
       ((A(u^d)v+B(u^e))\frac{\partial}{\partial v}}$
	\item $\displaystyle{ \partial=av\frac {\partial}{\partial v} + ((A(v^d)u+B(v^{e'}))\frac{\partial}{\partial u}}$
	\end{enumerate}
      \item[]with $a\in\C$, $0<e'<d$ such that $[ee']=[1] \in \Z_d$
        and $A,B\in \C[t]$.
\item \begin{enumerate}\itemsep8pt 
  \item $\displaystyle{\partial=av\frac{\partial}{\partial v} +
      A(u^mv^n)\left[nu\frac{\partial}{\partial u} -
        mv\frac{\partial}{\partial v}\right]}$
\item If $d=4d'$ and $e=2d'+1$ then we also have
\item[] $\displaystyle{\partial=a(u+v)\left(\frac{\partial}{\partial u}
      + \frac{\partial}{\partial v}\right) +
    A((u^2-v^2)^{2d'})\left[u\frac{\partial}{\partial v} +
      v\frac{\partial}{\partial u}\right]}$
 \end{enumerate}
\item[]with $a\in\C$, $m,n\in\N$ with $[m+en]=[0]$ and
  $A\in\C[t]$ .
     
\item There are $a\in\C$, $m,n,l\in \N$ with $[m]=[0]$, $p\in
  A_{[0]}$, $\deg p< l$, $p(0)\neq 0$ and $A\in\C[t]$ with the
  property that \[A(x^m(x^ly+p(x))^n)\cdot(mp(x)+nxp'(x))-ap(x) \in
  x^l\cdot\C[x,y]\] such that
\item[] \begin{enumerate} \itemsep8pt
  \item $\displaystyle{ \partial= a\left(v+\frac{p(u)}{u^l}\right)
      \frac{\partial}{\partial v}+}$
  \item[] \quad $\displaystyle{A(u^m(u^lv +
      p(u))^n)\cdot\left[nu\frac{\partial}{\partial u} - \left((m+nl)v
          + \frac{mp(u)+nup'(u)}{u^l}\right)\frac{\partial}{\partial
          v}\right]}$
  \item[] with $[l+e]=0$.
  \item $\displaystyle{ \partial= a\left(u+\frac{p(v)}{v^l}\right)
      \frac{\partial}{\partial u} + }$
  \item[] \quad $\displaystyle{A(v^m(v^lu +
      p(v))^n)\cdot\left[nv\frac{\partial}{\partial v} - \left((m+nl)u
          + \frac{mp(v)+nvp'(v)}{v^l}\right)\frac{\partial}{\partial
          u}\right]}$
  \item[] with $[1+le]=0$.
\end{enumerate}
\end{enumerate}

\end{theorem}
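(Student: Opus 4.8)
The plan is to run Brunella's dichotomy (as recorded in Lemma~\ref{lemhomfib}) and then, in each of the resulting cases, (a) put the relevant fibration or first integral into an equivariant normal form using Corollary~\ref{c-fiber}, Lemmas~\ref{cstar-fiber-1}, \ref{cstar-fiber-2}, \ref{lemfirstint}, and (b) write down the most general invariant complete vector field tangent to (or preserving) that fibration. So the first step is: given a complete invariant $\partial$, Lemma~\ref{lemhomfib} gives us either a $\Z_d$-homogeneous fibration $f$ with general fiber $\C$ or $\C^*$ preserved by the flow, or a reduced rational first integral. We treat the three geometric possibilities for $f$ separately, and the first-integral case last (noting that a rational first integral is just the ``$A$ constant'' degeneration and can be folded in by allowing $A\in\C[t]$ throughout).

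For the $\C$-fiber case, Corollary~\ref{c-fiber} normalizes $f$ to $u$ or $v$; WLOG $f=u$. A vector field preserving the fibration $\{u=\text{const}\}$ has the form $g(u)\partial/\partial u + h(u,v)\partial/\partial v$, and invariance forces $g\in A_{[0]}$, i.e.\ $g=g(u^d)$... but completeness of $\partial$ as a vector field on $\C^2$ (not merely tangency to the fibration) is more restrictive. Here I would argue as in Brunella: the induced action on the base $\C_u$ must be complete, hence $g(u)\partial/\partial u$ is a complete field on $\C$, so $g$ is affine in $u$; invariance then forces $g(u)=au$ (the constant term must vanish since $g\in A_{[0]}$ and $A_{[0]}\cap\{$linear$\}=\C u^d\cap\C u=0$ unless $d=1$, and the $d=1$ case $g=$const is absorbed by allowing... — actually $g=au$ works in all cases once we observe a nonzero constant term would make $\{u=0\}$ non-invariant contradicting homogeneity of $f$). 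Restricting to each fiber $\{u=c\}$, the field $h(c,v)\partial/\partial v$ on $\C$ must be complete for generic $c$, so $h$ is affine in $v$: $h(u,v)=\alpha(u)v+\beta(u)$. Invariance $h\in A_{[e]}$ then forces $\alpha\in A_{[0]}=\C[u^d]$ and $\beta\in A_{[e]}$; but $\beta$ must also be ``affine-compatible'' so that completeness holds for all $c$ simultaneously — tracking the flow one sees $\beta$ can be taken of the form $B(u^e)$ after the remaining equivariant automorphisms $(u,v)\mapsto(u,v+\text{invariant section})$, giving form (1)(a). Form (1)(b) is the symmetric case $f=v$, with the exponent $e$ replaced by the inverse exponent $e'$ (so that $\{u=0\}$ is $\Z_d$-homogeneous of the correct degree).

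For the $\C^*$-fiber case with $f^{-1}(0)\cong\{uv=0\}$, Lemma~\ref{cstar-fiber-2} gives $f=u^mv^n$ (or the exceptional $f=u^2-v^2$ when $d=4d',e=2d'+1$). The Lie algebra of fields preserving the fibration $\{u^mv^n=\text{const}\}$ is spanned over its first integrals by the Euler-type field $nu\partial/\partial u - mv\partial/\partial v$ and the field $v\partial/\partial v$ (or $u\partial/\partial u$); imposing completeness and invariance, and absorbing first integrals into coefficient functions $A(u^mv^n)$, yields form (2)(a); the exceptional fibration yields (2)(b) after the change of coordinates $u\pm v$. For the $\C^*$-fiber case with $f^{-1}(0)\cong\C\cup\C^*$, Lemma~\ref{cstar-fiber-1} gives $f=u^m(u^lv+p(u))^n$ with $p$ invariant, $\deg p<l$, $p(0)\neq0$, $[l+e]=[0]$ (or the $u\leftrightarrow v$ analogue). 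Again the fibration-preserving fields are generated by an Euler-type field adapted to $f$ and by one extra field; writing the extra field in the $(u,v)$ coordinates and demanding it be a genuine (regular, complete) vector field on $\C^2$ produces the divisibility/regularity condition $A(\cdots)(mp+nup')-ap\in u^l\C[u,v]$ displayed in case (3), together with forms (3)(a) and (3)(b). Finally, the reduced-rational-first-integral branch of Lemma~\ref{lemhomfib}, combined with Lemma~\ref{lemfirstint}, either reduces to one of the homogeneous-fibration normal forms already handled (with $A$ now an arbitrary polynomial rather than a unit, which is why $A\in\C[t]$ appears everywhere) or gives $f=u^m/v^n$, whose associated complete invariant fields are exactly the ``$A\equiv\text{const}$'' specializations of case (2)(a).

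The main obstacle I expect is case (3): extracting the precise regularity condition $A(u^m(u^lv+p(u))^n)(mp(u)+nup'(u))-ap(u)\in u^l\C[u,v]$ requires carefully writing the second generator of the fibration-preserving Lie algebra — which a priori has a pole along $\{u=0\}$ of order $l$ coming from the $p(u)/u^l$ shift needed to straighten the non-transverse fiber component — and then determining exactly which coefficient polynomials $A$ clear that pole. This is a bookkeeping computation with the partial-fraction structure of $1/u^l$ against the ideal generated by $u^lv+p(u)$, and getting the exponent bookkeeping ($[m]=[0]$, $[l+e]=[0]$, $\deg p<l$) consistent with both $\Z_d$-homogeneity and completeness is where the argument is most delicate. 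The $\C$- and $\C^*$-with-$\{uv=0\}$ cases are comparatively routine once the normal-form lemmas are in hand.
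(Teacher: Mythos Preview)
Your proposal follows the same route as the paper: invoke Lemma~\ref{lemhomfib} for the dichotomy, use Corollary~\ref{c-fiber} and Lemmas~\ref{cstar-fiber-1}, \ref{cstar-fiber-2}, \ref{lemfirstint} to put the fibration or first integral into equivariant normal form, and then read off the fibration-preserving complete vector fields, imposing $\Z_d$-invariance at the end. The paper's proof is in fact much shorter than yours because, once the equivariant normal form of $f$ is established, it simply cites Proposition~2 of Brunella \cite{Br3} for the explicit list of complete fields preserving each such $f$ (this already contains the regularity condition in case~(3)), and then observes which extra congruence conditions $\Z_d$-invariance imposes; for the rational first integral $u^m/v^n$ it notes directly that $\partial = nu\,\partial/\partial u + mv\,\partial/\partial v$, which is the $A$-constant, $a=0$ instance of (2)(a).

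One small slip to fix: in the $\C$-fiber case you write that invariance forces the $\partial/\partial u$-coefficient $g$ into $A_{[0]}$; in fact invariance of $g(u)\,\partial/\partial u$ means $g(\zeta u)=\zeta g(u)$, i.e.\ $g\in A_{[1]}$. Combined with $\deg g\le 1$ this still gives $g(u)=au$, so your conclusion is right but the justification should be corrected. Your anticipated difficulty in case~(3) is exactly the content that the paper outsources to Brunella's Proposition~2; if you want a self-contained argument you will indeed need to carry out that pole-clearing computation, but nothing new beyond Brunella is required.
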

\begin{proof}
  By Theorem \ref{lemhomfib} we know that the flow $\partial$ preserves
  fibers of a $\Z_d$-homogeneous $\C$- or $\C^*$-polynomial (which are
  described in Corollary~\ref{c-fiber} and Lemmas~\ref{cstar-fiber-1}
  and \ref{cstar-fiber-2}) or it has a rational first integral (which
  may be assumed to be of the form $u^m/v^n$ by Lemma
  \ref{lemfirstint}). Once we have a polynomial that is preserved by
  the flow we can check in Proposition 2 in \cite{Br3} how the vector
  field looks like. Since the vector fields need to be
  $\Z_d$-invariant some extra conditions are required. In the case of
  the rational first integral we have $\partial=nu\partial/\partial u +
  mv\partial/\partial v$ which is already in the list.
\end{proof}

\section{The strong algebraic density property for affine toric
  surfaces}

First we give a new concept of the ADP which was first introduced in
\cite{KaKu14}.

\begin{definition}
  Let $\Gamma$ be a group acting on an smooth affine algebraic variety
  $X$. Then $X$ has $\Gamma$-ADP if the Lie algebra of all
  $\Gamma$-invariant algebraic vector fields coincides with the Lie
  algebra generated by all $\Gamma$-invariant complete algebraic
  vector fields.
\end{definition}
As in the section above let $d,e\in\Z$ be two coprime numbers with
$0<e<d$ and let $\zeta$ be a primitive $d$-th root of unity. Consider
again the $\Z_d$-action on $\C^2$ given by $\zeta\cdot(u,v)=(\zeta u,
\zeta^e v)$. Moreover let $e'$ be the unique integer with $0<e'<d$ and
$ee'=1$ mod $d$.  It is clear that:
\begin{proposition} \label{surf-ADP}
  $V_{d,e}$ has the strong ADP if and only if $\C^2$ has the
  $\Z_d$-ADP.
\end{proposition}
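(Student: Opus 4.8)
The plan is to show that the quotient map $\pi\colon\C^2\to V_{d,e}$ induces, via its fibre-wise tangent map $D\pi$ over the smooth locus, a Lie-algebra isomorphism between $\VFalg(V_{d,e},\Xsing)$ and the Lie algebra of $\Z_d$-invariant regular vector fields on $\C^2$, under which complete fields correspond exactly to complete fields; the asserted equivalence then follows by comparing the two density conditions term by term. First I would dispose of the case $d=1$: then $V_{d,e}=\C^2$ is smooth, $\Xsing=\varnothing$, and both sides of the equivalence reduce to the usual ADP of $\C^2$. So assume $d\geq2$, in which case $V_{d,e}$ has a single singular point, the image of $0\in\C^2$, with maximal ideal $I(\Xsing)$ inside $A_{[0]}=\C[V_{d,e}]$, and $\pi$ is \'etale over $V_{d,e}^{\mathrm{reg}}$.

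Next I would set up the correspondence of vector fields. In one direction, a $\Z_d$-invariant regular vector field on $\C^2$ is of the form $f\,\partial/\partial u+g\,\partial/\partial v$ with $f\in A_{[1]}$ and $g\in A_{[e]}$; since $\partial/\partial u$ lowers the $u$-degree by one and $\partial/\partial v$ the $v$-degree by one, such a derivation sends $A_{[j]}$ into $A_{[j]}$ for every $j$, hence restricts to a derivation of $A_{[0]}$, i.e.\ a regular vector field on $V_{d,e}$. Because $0<e<d$, both $f$ and $g$ have no constant term, so this restricted field carries $A_{[0]}$ into $I(\Xsing)$ and therefore lies in $\VFalg(V_{d,e},\Xsing)$. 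Conversely, a derivation of $A_{[0]}$ extends uniquely to $\fract A_{[0]}$, hence uniquely to the finite separable extension $\fract\C[u,v]$; this extension is $\Z_d$-invariant by the uniqueness (conjugating by $\zeta$ gives another extension of the same derivation of $\fract A_{[0]}$) and it preserves the integral closure $\C[u,v]$ of $A_{[0]}$ by Seidenberg's theorem, so it is a $\Z_d$-invariant regular vector field on $\C^2$. These two operations are mutually inverse and coincide with $D\pi$ and its inverse over $V_{d,e}^{\mathrm{reg}}$; in particular the correspondence is $\CC$-linear and preserves the Lie bracket, since the bracket of vector fields is natural under the \'etale map $\pi$.

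I would then check that this isomorphism matches complete fields with complete fields, which is essentially the observation already used in Section~4: if $\partial$ is complete on $V_{d,e}$ its flow lifts through the covering $\pi$ (unramified over the smooth locus, with $0$ a fixed point) to a $\Z_d$-invariant flow on $\C^2$, so the lift is complete; and if $\partial$ is a $\Z_d$-invariant complete field on $\C^2$ its flow is $\Z_d$-equivariant, hence descends to a flow on the quotient, so the descended field is complete. Combining the two paragraphs, under $D\pi$ the space $\VFalg(V_{d,e},\Xsing)$ corresponds to all $\Z_d$-invariant regular vector fields on $\C^2$ and $\Liealg(V_{d,e},\Xsing)$ corresponds to the Lie algebra generated by the $\Z_d$-invariant complete fields, so $\VFalg(V_{d,e},\Xsing)=\Liealg(V_{d,e},\Xsing)$ holds if and only if $\C^2$ has the $\Z_d$-ADP. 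The one point that needs genuine care, rather than bookkeeping, is the surjectivity of the lifting — that every regular vector field on the singular surface $V_{d,e}$ really does extend to a regular (not merely rational) $\Z_d$-invariant field on $\C^2$ — which is where the normality of $\C^2$ and the Seidenberg extension argument are used; the completeness matching is the other place where the ramified nature of $\pi$ has to be handled with a little care.
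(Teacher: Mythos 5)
Your argument is correct and is essentially the paper's own (implicit) one: the paper states this proposition without proof, as an immediate consequence of the correspondence, set up at the beginning of Section~4 via $D\pi$, between algebraic vector fields on $V_{d,e}$ and $\Z_d$-invariant algebraic vector fields on $\C^2$, which preserves Lie brackets and completeness in both directions and, since $0<e<d$, automatically produces fields vanishing on $\Xsing$. One small remark: Seidenberg's theorem is usually stated for the integral closure inside the quotient field, so for the regularity of the lifted field it is cleaner to argue, as you yourself indicate, via \'etaleness of $\pi$ over the smooth locus together with extension of the coefficients across the codimension-two point $0\in\C^2$.
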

Let us introduce the following subsets of $\ZZ^2$
\begin{eqnarray*}
  I & = & \lbrace (i,j)\in\ZZ_{\geq 0}^2: \ i+ej = 0 \ \mathrm{mod} \ d \rbrace,\\ 
  J & = & \lbrace (i,j)\in I\setminus\lbrace(0,0)\rbrace: \ i<e \ \mathrm{and} \ j<e' \rbrace \subset I,\\
\end{eqnarray*}

\begin{lemma}
  $|J| \leq1 \Leftrightarrow e \ \vert \ d+1$.
\end{lemma}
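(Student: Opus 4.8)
The goal is to show $|J|\le 1$ if and only if $e\mid d+1$. The set $J$ consists of the nonzero lattice points $(i,j)$ with $0\le i<e$, $0\le j<e'$ and $i+ej\equiv 0\pmod d$. The plan is to analyze, for each admissible $j$ in the range $0\le j<e'$, whether there is a corresponding $i$ in the range $0\le i<e$ with $i\equiv -ej\pmod d$. For a fixed $j$, the residue $-ej\pmod d$ determines $i$ uniquely (if it lies in $[0,e)$), so counting $|J|$ amounts to counting the values $j\in\{0,1,\dots,e'-1\}$ for which the least nonnegative residue of $-ej$ modulo $d$ is strictly less than $e$; we must then subtract the contribution of $(0,0)$, which corresponds to $j=0$.

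First I would handle the case $j=0$: this forces $i\equiv 0\pmod d$ and $0\le i<e<d$, so $i=0$, giving exactly the excluded point $(0,0)$; hence $j=0$ never contributes to $J$. So $|J|$ equals the number of $j\in\{1,\dots,e'-1\}$ with $(-ej\bmod d)<e$. Next, for such a $j$, write $-ej\equiv i\pmod d$ with $0\le i<e$; equivalently $ej+i\equiv 0\pmod d$, i.e.\ $ej+i=kd$ for some positive integer $k$ (positive since $ej\ge e>0$ and $i\ge 0$). Since $0\le i<e$ and $1\le j<e'$, the value $ej+i$ lies strictly between $0$ and $ee'$, so $1\le k\le \lceil ee'/d\rceil$... but more precisely, because $ee'\equiv 1\pmod d$, write $ee'=qd+1$; then $ej+i\le e(e'-1)+(e-1)=ee'-1=qd$, so $1\le k\le q$. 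The key computation is then to pin down, for each $k\in\{1,\dots,q\}$, whether the equation $ej+i=kd$ has a (necessarily unique) solution with $0\le i<e$, $1\le j<e'$: given $k$, set $i$ to be the residue of $-kd$... more simply, $i\equiv kd\pmod e$ determines $i\in[0,e)$ uniquely and then $j=(kd-i)/e$, and one checks $j<e'$ automatically from $kd\le qd<ee'$ together with $i\ge 0$, while $j\ge 1$ follows from $kd\ge d>e-1\ge i$. Thus $|J|=q$, where $ee'=qd+1$.

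With $|J|=q$ in hand, the lemma reduces to: $q\le 1\iff e\mid d+1$. Since $q=(ee'-1)/d$ and $e,e'$ are both in $(0,d)$, we have $q\ge 1$ always (as $ee'\ge$ the smallest positive integer $\equiv 1\pmod d$, which is at least... indeed $ee'=qd+1\ge d+1$ forces $q\ge 1$ unless $ee'=1$, i.e.\ $e=e'=1$, the excluded degenerate situation; for $e=1$ one has $J=\emptyset$ trivially and $e\mid d+1$ trivially, so that boundary case is consistent). Hence $|J|\le 1\iff q=1\iff ee'=d+1$. Finally I would argue $ee'=d+1\iff e\mid d+1$: if $ee'=d+1$ then certainly $e\mid d+1$; conversely if $e\mid d+1$, write $d+1=ef$; then $ef\equiv 1\pmod d$, and since $0<f\le (d+1)/e<d$ (using $e\ge 2$; the case $e=1$ being trivial), $f$ is the unique inverse $e'$ of $e$ in $(0,d)$, so $ee'=ef=d+1$. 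This closes the equivalence.

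\textbf{Main obstacle.} The only real subtlety is the bookkeeping in the bijection between the index $k\in\{1,\dots,q\}$ and the points of $J$: one must verify carefully that for \emph{every} $k$ in that range the resulting $j=(kd-i)/e$ genuinely satisfies $1\le j<e'$ (not just $j<e'$), and that distinct $k$ give distinct points — both follow from the strict inequalities $0\le i<e$, $0<kd\le qd<ee'$, but the edge behavior at $k=q$ (where $kd=qd=ee'-1$) should be checked explicitly. Everything else is elementary modular arithmetic.
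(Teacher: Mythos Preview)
Your argument is correct, and it takes a genuinely different route from the paper's proof. The paper argues case by case: it notes $J=\emptyset$ when $e=1$, exhibits the explicit element $(e-1,e'-1)\in J$ whenever $e,e'>1$, shows this element is unique when $ee'=d+1$ via the crude bound $i+je<e+d<2d$, and when $ee'\ge 2d+1$ produces a second explicit element $(d-le,l)$ for suitable $l$. Your approach is a systematic count: you set up a bijection $k\mapsto(kd\bmod e,\,(kd-(kd\bmod e))/e)$ between $\{1,\dots,q\}$ and $J$, where $ee'=qd+1$, and deduce the exact formula $|J|=q$. This is strictly more informative than the paper's argument, since it gives $|J|$ on the nose rather than just deciding whether $|J|\le 1$; it also recovers the paper's distinguished point $(e-1,e'-1)$ as the image of $k=q$. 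The price is a bit more bookkeeping in checking the range conditions on $i$ and $j$ (which you correctly flag as the main thing to verify), whereas the paper's proof is shorter and needs only two well-chosen explicit points. Both approaches ultimately rest on the same elementary observation that $e\mid d+1$ is equivalent (for $e\ge 2$) to $ee'=d+1$.
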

\begin{proof}
  If $e=1$ then also $e'=1$ and thus $J=\emptyset$. If $e,e'>1$ then $
  |J| \geq 1$ since $(e-1,e'-1)\in J$. Assume $ee'=d+1$, $i<e$ and
  $j<e'$, then we have $i+je<e+d<2d$ and the equality $[i+je]=[0] \in
  \Z_d$ implies $i+je=d$. Similarly we get $ie'+j=d$ and thus there is
  a unique solution for $(i,j)$ and hence $|J| =1$. If $ee'\geq 2d+1$
  then we get another solution of $[i+je]=[0]$ in $J$. Indeed, choose
  $l\in\N$ such that $0<d-le<e$ then $(d-le,l)\neq(e-1,e'-1)$ lies in
  $J$, since $le<d$ implies $0<l<e'-1$.
\end{proof}

Let us introduce the following notation:

\begin{eqnarray*}
 \mathrm{VF}^{(i,j)} & = & \left\lbrace u^i v^j \left(au\frac{\partial}{\partial u} + bv\frac{\partial}{\partial v}\right): \ a,b\in \C \right\rbrace,\\
 \mathrm{CVF}^{(i,j)} & = & \left\lbrace au^i v^j \left(ju\frac{\partial}{\partial u} - iv\frac{\partial}{\partial v}\right): \ a\in \C \right\rbrace
\subset\mathrm{VF}^{(i,j)}, \\
\mathrm{LND}_u^k &=& \left\lbrace av^{ke'}\frac{\partial}{\partial u}: \ a\in\C\right\rbrace,\\
\mathrm{LND}_v^k&  = &\left\lbrace au^{ke}\frac{\partial}{\partial v}: \ a\in\C\right\rbrace.\\
\end{eqnarray*}

Remark that $\mathrm{CVF}^{(i,j)}$ corresponds to the subset of
complete vector fields in $\mathrm{VF}^{(i,j)}$ by
Corollary~\ref{cor-gl-int}. We have the decomposition of
$\Z_d$-invariant vector fields in homogeneous vector fields given by:
\[
\mathrm{VF}_{\mathrm{alg}}^{\Z_d}(\C^2) = \bigoplus_{(i,j)\in
  I}\mathrm{VF}^{(i,j)} \oplus
\bigoplus_{k\in\N}\left(\mathrm{LND}_u^k \oplus
  \mathrm{LND}_v^k\right).
\]
We define the subspace $S$ of
$\mathrm{VF}_{\mathrm{alg}}^{\Z_d}(\C^2)$.
\[
 S = \bigoplus_{(i,j)\in J}\mathrm{CVF}^{(i,j)} \oplus \bigoplus_{(i,j)\in I\setminus J}\mathrm{VF}^{(i,j)} \oplus
\bigoplus_{k\in\N}\left(\mathrm{LND}_u^k \oplus \mathrm{LND}_v^k\right)\,.
\]

The following is our main result in this section.

\begin{theorem} \label{thminvliealg} For the Lie algebra
  $\mathrm{Lie}_{\mathrm{alg}}^{\Z_d}(\C^2)$ generated by all
  $\Z_d$-invariant complete algebraic vector fields on $\C^2$ we have:
\[
 \mathrm{Lie}_{\mathrm{alg}}^{\Z_d}(\C^2)=
\begin{cases}
  S & e = e'\\
  S \oplus \langle \partial \rangle & e \neq e'
\end{cases}
\]
for any $\partial\in \mathrm{VF}^{(e-1,e'-1)}\setminus
\mathrm{CVF}^{(e-1,e'-1)}$. In particular the codimension of the
inclusion $\mathrm{Lie}_{\mathrm{alg}}^{\Z_d}(\C^2) \subseteq
\mathrm{VF}_{\mathrm{alg}}^{\Z_d}(\C^2)$ is $|J|$ if $e=e'$ and
$|J| -1$ otherwise.
\end{theorem}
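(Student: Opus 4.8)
The plan is to identify which complete vector fields from the classification in Theorem~\ref{thmlist} generate $\mathrm{Lie}_{\mathrm{alg}}^{\Z_d}(\C^2)$, and then carefully track what the Lie brackets produce. The containment $S\subseteq \mathrm{Lie}_{\mathrm{alg}}^{\Z_d}(\C^2)$ (resp.\ $S\oplus\langle\partial\rangle$) is the ``easy'' direction: the locally nilpotent pieces $\mathrm{LND}_u^k$ and $\mathrm{LND}_v^k$ are complete by Corollary~\ref{cor-gl-int} and lie in the Lie algebra directly; the complete homogeneous fields spanning $\bigoplus_{(i,j)\in J}\mathrm{CVF}^{(i,j)}$ are complete for the same reason; and for $(i,j)\in I\setminus J$ one must show that all of $\mathrm{VF}^{(i,j)}$ — not just its complete part — is generated. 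The mechanism for the latter is bracketing: taking $[\,au^{ke}\tfrac{\partial}{\partial v}, \text{(a complete homogeneous field)}\,]$ and $[\,av^{ke'}\tfrac{\partial}{\partial u},\dots]$ via Lemma~\ref{commutator}, together with brackets of two $\mathrm{CVF}$'s, one produces the ``missing'' semisimple direction $u^iv^j(u\tfrac{\partial}{\partial u})$ inside $\mathrm{VF}^{(i,j)}$ whenever $(i,j)$ is large enough to be outside $J$ — precisely because then $(i,j)$ can be written as a sum of two smaller lattice points of $I$, or lies above an $\mathrm{LND}$ exponent.

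\textbf{Second, the reverse inclusion.} Here the point is that $S$ (resp.\ $S\oplus\langle\partial\rangle$) is already a Lie subalgebra of $\mathrm{VF}_{\mathrm{alg}}^{\Z_d}(\C^2)$, so it suffices to check that every complete $\Z_d$-invariant algebraic vector field $\partial$ lies in it. This is where Theorem~\ref{thmlist} does the work: one runs through the finite list of normal forms (1a), (1b), (2a), (2b), (3a), (3b) and verifies case by case that each lies in $S$, or in $S\oplus\langle\partial_0\rangle$ for a fixed choice $\partial_0\in\mathrm{VF}^{(e-1,e'-1)}\setminus\mathrm{CVF}^{(e-1,e'-1)}$. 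For instance the field in (1a), $au\tfrac{\partial}{\partial u}+(A(u^d)v+B(u^e))\tfrac{\partial}{\partial v}$, decomposes into an $\mathrm{LND}_v$ part (from $B(u^e)\tfrac{\partial}{\partial v}$) plus terms $u^{dk}\cdot(au\tfrac{\partial}{\partial u}+\text{lower})$ lying over exponents with $i\equiv0$, which are in $\mathrm{VF}^{(i,j)}$ and, since those exponents are never in $J$ (as $i<e$ forces $i=0$, and $(0,j)\in J$ would need $ej\equiv 0$ with $0<j<e'$, impossible), land in $S$. The fields of type (2) and (3) are manifestly built from $\mathrm{CVF}^{(i,j)}$-type expressions $A(\cdots)[nu\tfrac{\partial}{\partial u}-mv\tfrac{\partial}{\partial v}]$ plus extra summands that one checks lie over exponents outside $J$; the only genuinely delicate normal form is (2a) with $(m,n)$ small, where the homogeneous piece $A(u^mv^n)[nu\tfrac{\partial}{\partial u}-mv\tfrac{\partial}{\partial v}]$ for $A$ a constant gives exactly $\mathrm{CVF}^{(m,n)}$ with $(m,n)$ possibly equal to $(e-1,e'-1)$ — but that is in $S$ — and one must confirm no field of the list ever produces a \emph{non-complete} element of $\mathrm{VF}^{(e-1,e'-1)}$, which is why that direction only contributes $\langle\partial_0\rangle$ when it is separately forced in.

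\textbf{Third, the single exceptional direction.} The reason $\mathrm{VF}^{(e-1,e'-1)}$ is only partially hit is that $(e-1,e'-1)$ is the \emph{minimal} nonzero element of $J$: it cannot be written as a sum of two nonzero elements of $I$ lying in the relevant range, nor does it sit above an $\mathrm{LND}$ exponent in a way that lets a bracket recover the full two-dimensional fiber $\mathrm{VF}^{(e-1,e'-1)}$. So from complete fields one gets $\mathrm{CVF}^{(e-1,e'-1)}$ but nothing transverse to it — \emph{unless} $e\ne e'$, in which case the asymmetry between the $u$- and $v$-sides means the normal forms (1a)/(1b) or a bracket of an $\mathrm{LND}_u$ with an $\mathrm{LND}_v$-flavored complete field produces one extra independent vector in that fiber, giving the single summand $\langle\partial_0\rangle$. \textbf{The main obstacle} I anticipate is exactly this bookkeeping at the bottom of the lattice cone $J$: proving that for every $(i,j)\in J$ with $(i,j)\ne(e-1,e'-1)$ one can still generate all of $\mathrm{VF}^{(i,j)}$ (the Lemma before the theorem, $|J|\le 1\Leftrightarrow e\mid d+1$, together with the ``another solution'' construction when $ee'\ge 2d+1$, is tailored to this), while showing that at $(e-1,e'-1)$ itself one genuinely cannot — i.e.\ an obstruction argument, presumably via evaluating against the Haar-type volume form on $(\C^*)^2$ as in the proof of Theorem~\ref{finalthm}, or via a direct weight/degree count on brackets — is needed to get the codimension lower bound rather than just the upper bound. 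The codimension count $|J|$ versus $|J|-1$ then follows by combining the dimension of $\bigoplus_{(i,j)\in J}\mathrm{VF}^{(i,j)}/\mathrm{CVF}^{(i,j)}$ (which is $|J|$) with the one extra generator in the $e\ne e'$ case.
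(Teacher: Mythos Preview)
Your overall architecture --- show $S\subseteq\mathrm{Lie}_{\mathrm{alg}}^{\Z_d}(\C^2)$, then show every complete invariant field lies in a candidate Lie algebra, then argue the obstruction --- matches the paper's. But there are two genuine gaps.

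First, you propose to verify that each normal form from Theorem~\ref{thmlist} lies in $S$. But that classification is only \emph{up to equivariant automorphism}, so this alone does not show that \emph{every} complete invariant field lies in $\mathrm{Lie}(S)$. The paper closes this gap by proving that $\mathrm{Lie}(S)$ is stable under pushforward by equivariant automorphisms: it invokes the result of Arzhantsev--Zaidenberg that every such automorphism is a composition of equivariant linear maps and time-$t$ flows of the LNDs $u^{ke}\partial/\partial v$ and $v^{ke'}\partial/\partial u$, and then uses the finite Taylor expansion $\phi^t_*\delta=\sum_n\frac{t^n}{n!}\,\mathrm{ad}(\partial)^n\delta$ (finite because $\partial$ is locally nilpotent) to see that each such flow preserves $\mathrm{Lie}(S)$. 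You have not accounted for this step, and without it your reverse inclusion does not close.

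Second, your description of the obstruction at $J$-points is inverted. You write that for $(i,j)\in J$ with $(i,j)\neq(e-1,e'-1)$ one must show that all of $\mathrm{VF}^{(i,j)}$ can still be generated, while at $(e-1,e'-1)$ one cannot. The theorem asserts the opposite: for \emph{every} $(i,j)\in J$ one has only $\mathrm{CVF}^{(i,j)}\subset\mathrm{Lie}(S)$, \emph{except} that at $(e-1,e'-1)$, when $e\neq e'$, the bracket $[\mathrm{LND}_u^1,\mathrm{LND}_v^1]$ supplies the missing transverse direction (Lemma~\ref{lembrackets}(iv)). Of your two suggested obstruction mechanisms, the degree count is the one the paper uses: brackets add bidegrees, so to land in $\mathrm{VF}^{(i,j)}\setminus\mathrm{CVF}^{(i,j)}$ with $i<e$ and $j<e'$ one checks via Lemma~\ref{lembrackets} that the only contributing bracket of generators is $[\mathrm{LND}_u^k,\mathrm{LND}_v^{k'}]$, which forces $k=k'=1$ and hence $(i,j)=(e-1,e'-1)$. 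The Haar-form approach from Theorem~\ref{finalthm} would not apply here, since the LND generators do not preserve the big torus.
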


Remark that $\dim_\CC \mathrm{CVF}^{(i,j)}=1$ and
$\dim_\CC\mathrm{VF}^{(i,j)}=2$ as a vector space. Hence, in the case
where $e\neq e'$ we have that $\mathrm{VF}^{(e-1,e'-1)}\subseteq
\mathrm{Lie}_{\mathrm{alg}}^{\Z_d}(\C^2)$. We postpone the proof of
this theorem to the end of this section.

The theorem immediately shows in which cases $\C^2$ has $\Z_d$-ADP or,
equivalently, $V_{d,e}$ has the strong ADP. It also allows in each
particular case to determine the values of $\ell$ from
Definition~\ref{ADP} for which $I^\ell\VFalg(X,\Xsing) \subseteq
\Liealg(X,\Xsing)$.

\begin{corollary} \label{Z-ADP} Let $V_{d,e}$ be a toric surface.

  \begin{enumerate}[$(i)$]
  \item $V_{d,e}$ has the strong ADP if and only if if and only if $e\
    \vert \ d+1$ and $e^2 \neq d + 1$.
  \item $V_{d,e}$ has the ADP and an upper bound for the minimal
    $\ell$ such that $I^\ell\VFalg(X,\Xsing) \subseteq
    \Liealg(X,\Xsing)$ is $e+e'-2$.
  \end{enumerate}

\end{corollary}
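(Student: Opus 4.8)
The plan is to read both statements off Theorem~\ref{thminvliealg}, Proposition~\ref{surf-ADP} and the lemma stating $|J|\le 1\iff e\mid d+1$, together with an elementary arithmetic argument. For part $(i)$: by Proposition~\ref{surf-ADP}, $V_{d,e}$ has the strong ADP iff $\mathrm{Lie}_{\mathrm{alg}}^{\Z_d}(\C^2)=\mathrm{VF}_{\mathrm{alg}}^{\Z_d}(\C^2)$, i.e. iff the codimension computed in Theorem~\ref{thminvliealg} vanishes. If $e=e'$ that codimension is $|J|$, so the strong ADP holds iff $J=\emptyset$, which (by the argument in the proof of the $|J|$-lemma) means $e=1$; and for $e=1$ the condition ``$e\mid d+1$ and $e^2\ne d+1$'' clearly holds (as $d\ge 2$). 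If instead $e=e'>1$ and $e\mid d+1$, write $e^2=ee'=1+kd$ with $k\ge 1$ and $d+1=me$; substituting gives $e(e-km)=1-k$, so $e\mid k-1$, and if $k\ge 2$ then $k\ge e+1$ and $e^2=1+kd\ge 1+(e+1)d>1+(e+1)e>e^2$, a contradiction; hence $k=1$ and $e^2=d+1$, so the condition fails. Thus for $e=e'$ the condition holds exactly when $e=1$, as needed. If $e\ne e'$ then $e,e'\ge 2$, so $(e-1,e'-1)\in J$ and $|J|\ge 1$; the codimension is $|J|-1$, hence the strong ADP holds iff $|J|=1$, iff $e\mid d+1$ by the $|J|$-lemma; moreover $e^2=d+1$ would force $e'=e$, so in this case ``$e\mid d+1$'' and ``$e\mid d+1$ and $e^2\ne d+1$'' are equivalent. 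Assembling the cases gives $(i)$.

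For part $(ii)$: $\Xsing$ is the unique $\TT$-fixed point, so $V_{d,e}\setminus\Xsing$ strictly contains $\TT$ (it also contains the two one-dimensional orbits), and Theorem~\ref{finalthm} already yields the ADP. For the bound I pass to $\C^2$ via $\pi$, where $\VFalg(V_{d,e},\Xsing)$ is identified with $\mathrm{VF}_{\mathrm{alg}}^{\Z_d}(\C^2)$ and $I=I(\Xsing)$ with the ideal spanned by the invariant monomials $u^av^b$ with $(a,b)\in I\setminus\{(0,0)\}$ (here $I$ also denotes the subset of $\ZZ_{\geq 0}^2$ from above). All modules involved are $\ZZ^2$-graded, and by Theorem~\ref{thminvliealg} the modules $\mathrm{VF}_{\mathrm{alg}}^{\Z_d}(\C^2)$ and $\mathrm{Lie}_{\mathrm{alg}}^{\Z_d}(\C^2)$ coincide in every bidegree that does not lie in $J$. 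Hence it suffices to show: for every homogeneous piece of $\mathrm{VF}_{\mathrm{alg}}^{\Z_d}(\C^2)$, of bidegree $(i,j)$, and every monomial $u^av^b$ of $I^{\,e+e'-2}$ (so $(a,b)$ is a sum of $e+e'-2$ nonzero elements of $I\cap\ZZ_{\geq 0}^2$), the product has bidegree $(i+a,j+b)\notin J$. Now any nonzero element of $I\cap\ZZ_{\geq 0}^2$ either has one coordinate $0$ and the other a positive multiple of $d$ (as $e,d$ are coprime) — in which case one coordinate of $(i+a,j+b)$ is $\ge d>\max(e,e')$ and we are out of $J$ — or has both coordinates $\ge 1$; in the latter case, which must hold for all $e+e'-2$ summands, $a\ge e+e'-2$, so (assuming $J\ne\emptyset$, whence $e'\ge 2$) $i+a\ge a\ge e$ and again $(i+a,j+b)\notin J$. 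If $J=\emptyset$ there is nothing to prove and $\ell=0=e+e'-2$ works. This establishes $I^{\,e+e'-2}\VFalg(V_{d,e},\Xsing)\subseteq\Liealg(V_{d,e},\Xsing)$.

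The only genuine friction I anticipate is bookkeeping rather than a new idea. In part $(i)$ it is the short arithmetic step ruling out ``$e\mid d+1$ with $e^2\ne d+1$'' when $e=e'>1$. In part $(ii)$ it is pinning down precisely how $I$, $\VFalg(V_{d,e},\Xsing)$ and $\Liealg(V_{d,e},\Xsing)$ correspond to their $\Z_d$-equivariant counterparts on $\C^2$, and checking that multiplying the $\mathrm{LND}_u^k$, $\mathrm{LND}_v^k$ summands and the $\mathrm{VF}^{(i,j)}$ with $(i,j)\notin J$ by a monomial of $I$ never lands back in $J$, so that no new obstruction is created. Once the setup is in place, the combinatorial heart — that no point of $J$ survives $e+e'-2$ translations by the semigroup $I\cap\ZZ_{\geq 0}^2$ — is immediate.
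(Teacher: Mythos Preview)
Your argument is correct and is exactly the route the paper intends: the authors state this corollary immediately after Theorem~\ref{thminvliealg} with only the sentence ``The theorem immediately shows in which cases $\C^2$ has $\Z_d$-ADP\ldots\ It also allows in each particular case to determine the values of $\ell$\ldots'' and give no further proof. Your write-up supplies precisely the missing bookkeeping---the case split $e=e'$ versus $e\ne e'$ combined with the $|J|$-lemma for part~$(i)$, and the graded check that $I^{\,e+e'-2}$ pushes every bidegree out of $J$ for part~$(ii)$---so there is nothing to compare against beyond noting that you have made explicit what the paper leaves to the reader.

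Two small points worth tightening when you write it up. First, in the ``one coordinate zero'' branch of part~$(ii)$ you use $i,j\ge 0$, which is fine for the $\mathrm{VF}^{(i,j)}$ summands but not literally for the LND pieces; you already flag this, and indeed $u^av^b\cdot u^m\partial_v$ with $m\ge e$ lands either in another $\mathrm{LND}_v$ (if $b=0$) or in $\mathrm{VF}^{(a+m,b-1)}$ with first coordinate $\ge e$, so no obstruction arises---just say so explicitly. Second, the identification $\VFalg(V_{d,e},\Xsing)=\mathrm{VF}_{\mathrm{alg}}^{\Z_d}(\C^2)$ and $\Liealg(V_{d,e},\Xsing)=\mathrm{Lie}_{\mathrm{alg}}^{\Z_d}(\C^2)$ deserves one sentence (every derivation of $A_{[0]}$ extends uniquely to an invariant derivation of $\C[u,v]$, and completeness is preserved under the quotient), since Proposition~\ref{surf-ADP} only states the strong-ADP equivalence.
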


The next lemma shows what is happening if we take the Lie bracket of
two complete homogeneous vector fields.
\begin{lemma} \label{lembrackets} %
  Let $\partial_1\in \mathrm{CVF}^{(i,j)}$, $\partial_2 \in
  \mathrm{CVF}^{(i',j')}$, $\partial_3\in\mathrm{LND}_u^k$ and $\partial_4 \in
  \mathrm{LND}_v^{k'}$, then
  \begin{enumerate}[$(i)$]
  \item $[\partial_1,\partial_2] \in \mathrm{CVF}^{(i+i',j+j')}$,
  \item $[\partial_1,\partial_3] \in \mathrm{VF}^{(i-1,j+ke')}\setminus \mathrm{CVF}^{(i-1,j+ke')}$,
  \item $[\partial_1,\partial_4] \in \mathrm{VF}^{(i+k'e,j-1)}\setminus
    \mathrm{CVF}^{(i+k'e,j-1)}$,
  \item $[\partial_3,\partial_4] \in \mathrm{VF}^{(k'e-1,ke'-1)}$. Furthermore,
    $[\partial_3,\partial_4] \in \mathrm{CVF}^{(k'e-1,ke'-1)}$ if and only if
    $ek'= e'k$.
  \end{enumerate}
\end{lemma}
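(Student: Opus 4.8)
The plan is to prove Lemma~\ref{lembrackets} by direct computation, handling the four brackets in turn and then carefully separating, in each case, the part of the answer that is again a complete (i.e.\ divergence-free homogeneous) vector field from the part that is not. Throughout I would write a generic element of $\mathrm{CVF}^{(i,j)}$ as $\partial_1=au^iv^j(ju\,\partial_u-iv\,\partial_v)$, and similarly $\partial_2=bu^{i'}v^{j'}(j'u\,\partial_u-i'v\,\partial_v)$, $\partial_3=cv^{ke'}\partial_u$, $\partial_4=c'u^{k'e}\partial_v$, where $a,b,c,c'\in\CC$. It will be convenient to record the general fact that a homogeneous vector field $u^rv^s(\alpha u\,\partial_u+\beta v\,\partial_v)$ lies in $\mathrm{CVF}^{(r,s)}$ precisely when $\alpha s+\beta r=0$, which is the divergence condition; this is exactly the content of Corollary~\ref{cor-gl-int} translated into these coordinates. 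The degree bookkeeping in the torus grading is immediate from Lemma~\ref{commutator} (or just from the Leibniz rule), so the only real content is checking the divergence condition for each bracket.

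For $(i)$, a direct application of the Leibniz rule gives
\[
[\partial_1,\partial_2]=ab\,u^{i+i'}v^{j+j'}\bigl((i'+j')(ji'-ij')u\,\partial_u+(i+j)(ji'-ij')v\,\partial_v\bigr)\cdot(\text{sign/normalization}),
\]
and one sees that the coefficient vector is a scalar multiple of $\bigl((j+j')u\,\partial_u-(i+i')v\,\partial_v\bigr)$, since the bracket of two divergence-free homogeneous fields is again divergence-free (this is a general Lie-algebraic fact: the divergence is a Lie-algebra cocycle that vanishes on both inputs and on their bracket when the relevant weight is nonzero). Hence $[\partial_1,\partial_2]\in\mathrm{CVF}^{(i+i',j+j')}$. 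For $(ii)$, compute $[\partial_1,\partial_3]$ with $\partial_3=cv^{ke'}\partial_u$: the two terms are $\partial_1(cv^{ke'})\partial_u$ and $-\partial_3\bigl(au^iv^j(ju\,\partial_u-iv\,\partial_v)\bigr)$, landing in $\mathrm{VF}^{(i-1,j+ke')}$ by degree count. The key point is that the $\partial_u$-coefficient and $\partial_v$-coefficient of the result do \emph{not} satisfy the divergence relation (generically), because the new bidegree $(i-1,j+ke')$ is "off the line" determined by the original complete field; concretely one checks the coefficient of $u^{i-1}v^{j+ke'}u\,\partial_u$ against that of $u^{i-1}v^{j+ke'}v\,\partial_v$ and finds they are not proportional to $(j+ke',-(i-1))$. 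The case $(iii)$ is identical after swapping the roles of $u,v$.

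For $(iv)$, I would compute $[\partial_3,\partial_4]=[cv^{ke'}\partial_u,\,c'u^{k'e}\partial_v]=cc'\bigl(k'e\,u^{k'e-1}v^{ke'}\partial_u-ke'\,u^{k'e}v^{ke'-1}\partial_v\bigr)$, which is $cc'\,u^{k'e-1}v^{ke'-1}\bigl(k'e\,u\,\partial_u-ke'\,v\,\partial_v\bigr)$, visibly in $\mathrm{VF}^{(k'e-1,ke'-1)}$. By the divergence criterion above, this field lies in $\mathrm{CVF}^{(k'e-1,ke'-1)}$ exactly when $k'e\cdot(ke'-1)+(-ke')\cdot(k'e-1)=0$, i.e.\ when $k'e\cdot ke'-k'e-ke'\cdot k'e+ke'=0$, which simplifies to $ke'-k'e=0$, that is $ek'=e'k$. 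This gives the stated equivalence. I expect the main obstacle to be purely organizational rather than conceptual: keeping the normalizations of the generators straight so that the "$\setminus\mathrm{CVF}$" assertions in $(ii)$ and $(iii)$ are genuinely valid for \emph{every} nonzero choice of the scalars (one must check that the offending coefficient cannot accidentally vanish), and making sure the degenerate sub-cases (e.g.\ $i=0$ or $j=0$, where one of the vector fields is itself reducible) are either excluded by the hypothesis $\partial_\bullet\in\mathrm{CVF}$ being nonzero or handled separately. Once the divergence bookkeeping is set up cleanly, each item is a two-line computation.
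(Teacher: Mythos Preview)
Your approach is exactly the paper's: the authors simply write ``All four statements follow by direct computation using Corollary~\ref{cor-gl-int} and Lemma~\ref{commutator}'', and you carry out that computation. Two concrete slips to fix, though.

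First, your completeness criterion is transposed. In the notation $\partial_{e,p}$ with $e=(r,s)$ and $p=(\alpha,\beta)$, Corollary~\ref{cor-gl-int} gives the condition $\langle e,p\rangle=r\alpha+s\beta=0$, not $\alpha s+\beta r=0$. (Also, this is not the ordinary divergence; it is the divergence with respect to the Haar form $\tfrac{du}{u}\wedge\tfrac{dv}{v}$, which is the right invariant to track here.)

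Second, in $(iv)$ your bracket has the roles of $\partial_u$ and $\partial_v$ swapped: the correct computation is
\[
[cv^{ke'}\partial_u,\,c'u^{k'e}\partial_v]
=cc'\,u^{k'e-1}v^{ke'-1}\bigl(-ke'\,u\,\partial_u+k'e\,v\,\partial_v\bigr),
\]
so $(\alpha,\beta)=(-ke',k'e)$. With the correct criterion $r\alpha+s\beta=0$ and $(r,s)=(k'e-1,ke'-1)$ one gets $ke'-k'e=0$, i.e.\ $ek'=e'k$, as you claim. Your two errors happen to cancel in this item, but they would not in general, so straighten both out before writing the final version. For $(ii)$ and $(iii)$, once the criterion is corrected the ``not in $\mathrm{CVF}$'' check becomes the single-line verification $\langle e,p\rangle=ac(j+ike')\neq0$ (respectively $ac(i+je)\neq0$) for nonzero inputs, which also covers your worry about accidental vanishing.
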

\begin{proof}
  All four statements follow by direct computation using
  Corollary~\ref{cor-gl-int} and Lemma~\ref{commutator}.
\end{proof}

The next two lemmas show
$\mathrm{Lie}(S)=\mathrm{Lie}_{\mathrm{alg}}^{\Z_d}(\C^2)$, each of
them showing one inclusion.
\begin{lemma}
 $S\subset \mathrm{Lie}_{\mathrm{alg}}^{\Z_d}(\C^2)$
\end{lemma}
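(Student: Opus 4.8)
The plan is to show that each of the three "building blocks" appearing in the definition of $S$ lies in $\mathrm{Lie}_{\mathrm{alg}}^{\Z_d}(\C^2)$, and then conclude since $S$ is their direct sum. The three blocks are: the complete homogeneous vector fields $\mathrm{CVF}^{(i,j)}$ for $(i,j)\in J$, the full homogeneous spaces $\mathrm{VF}^{(i,j)}$ for $(i,j)\in I\setminus J$, and the locally nilpotent pieces $\mathrm{LND}_u^k$ and $\mathrm{LND}_v^k$ for $k\in\N$. The last block is immediate: a vector field $av^{ke'}\partial/\partial u$ is $\Z_d$-invariant (since $[ke'e]=[0]$ forces $ke'\equiv 0$, wait---rather $v^{ke'}$ has degree $[ke'\cdot e]$ and one checks $\partial/\partial u$ shifts by $[-1]$, so this is invariant exactly by the defining congruence), it is locally nilpotent, hence complete, hence in the Lie algebra by definition; similarly for $\mathrm{LND}_v^k$. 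Likewise each $\mathrm{CVF}^{(i,j)}$ consists of complete vector fields by Corollary~\ref{cor-gl-int} (they are of type I with $\langle e,p\rangle=0$ in the toric language, or directly $u^iv^j(ju\partial_u-iv\partial_v)$ has first integral $u^iv^j$), so the $J$-block is in $\mathrm{Lie}_{\mathrm{alg}}^{\Z_d}(\C^2)$ trivially as well.

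So the real content is the middle block: for every $(i,j)\in I\setminus J$ I must produce \emph{all} of $\mathrm{VF}^{(i,j)}$, a two-dimensional space, inside the Lie algebra, even though the only complete vector fields available in degree $(i,j)$ span just the one-dimensional $\mathrm{CVF}^{(i,j)}$. The idea is to realize a vector field of degree $(i,j)$ that is \emph{not} a multiple of the complete one as a Lie bracket of lower-degree complete vector fields, using Lemma~\ref{lembrackets}. The cleanest source is item $(iv)$: $[\partial_3,\partial_4]\in\mathrm{VF}^{(k'e-1,ke'-1)}$ with $\partial_3\in\mathrm{LND}_u^k$, $\partial_4\in\mathrm{LND}_v^{k'}$, and this bracket fails to be in $\mathrm{CVF}$ precisely when $ek'\neq e'k$. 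Together with $\mathrm{CVF}^{(k'e-1,ke'-1)}$ (already in the algebra), such a bracket spans the full $\mathrm{VF}^{(k'e-1,ke'-1)}$. Thus the plan is: given $(i,j)\in I\setminus J$, find $k,k'\in\N$ with $k'e-1\le i$, $ke'-1\le j$, $(k'e-1,ke'-1)\equiv(i,j)$ in the appropriate sense, and $ek'\neq e'k$, so that multiplying the resulting full $\mathrm{VF}^{(k'e-1,ke'-1)}$ by the invariant monomial $u^{\,i-(k'e-1)}v^{\,j-(ke'-1)}$ (which lands us back in an invariant module and preserves "full rank" since the multiplier is a unit times a monomial and $\mathrm{VF}^{(i,j)}$ is a free rank-two module locally on the torus) gives all of $\mathrm{VF}^{(i,j)}$. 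One also has items $(ii)$ and $(iii)$ as a fallback: bracketing a complete $\mathrm{CVF}^{(i',j')}$ with an $\mathrm{LND}$ produces non-complete vector fields of a shifted degree, and these, again combined with the complete vector field in the target degree, fill out the target $\mathrm{VF}$.

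The main obstacle, and where the combinatorics really has to be done, is exactly the book-keeping that for \emph{every} $(i,j)\in I\setminus J$ such a representation exists, i.e. that the degrees one can reach by these bracket operations (starting from $\mathrm{LND}_u^k$, $\mathrm{LND}_v^{k'}$ and $\mathrm{CVF}$'s and then multiplying by invariant monomials $u^av^b$ with $(a,b)\in I$) cover all of $I\setminus J$. The definition of $J$---the finitely many $(i,j)\in I\setminus\{(0,0)\}$ with $i<e$ and $j<e'$---is tailored precisely so that a pair \emph{outside} $J$ either has $i\ge e$ (so one can peel off a factor $u^e$, landing in a strictly smaller degree while staying in $I$, and induct) or has $j\ge e'$ (peel off $v^{e'}$ and induct), or is $(0,0)$ (handled directly: $\mathrm{VF}^{(0,0)}$ is spanned by the two commuting semisimple fields $u\partial_u$ and $v\partial_v$, and $u\partial_u-v\partial_v$ lies in $\mathrm{CVF}^{(0,0)}$ while the other combination is obtained as $[\partial_3,\partial_4]$ for a suitable non-proportional choice, e.g. $k,k'$ with $ek'\ne e'k$ and $k'e=1$, $ke'=1$ when $e=e'=1$, or otherwise by bracketing $v\partial/\partial u$-type and $u\partial/\partial v$-type generators). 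Thus the argument is a descent on $i+j$: the base cases are the finitely many small degrees, which one checks by hand using Lemma~\ref{lembrackets}$(ii)$--$(iv)$ and the fact that $\mathrm{CVF}^{(i,j)}$ is already available, and the inductive step multiplies a previously obtained $\mathrm{VF}^{(i-e,j)}$ or $\mathrm{VF}^{(i,j-e')}$ by $u^e$ respectively $v^{e'}$. I expect the only delicate point to be checking, in those base cases, that one genuinely obtains a vector field \emph{outside} $\mathrm{CVF}^{(i,j)}$---which is exactly the inequality $ek'\neq e'k$ in Lemma~\ref{lembrackets}$(iv)$, and this is where the hypothesis $(i,j)\notin J$ is used to guarantee enough room to choose such $k,k'$.
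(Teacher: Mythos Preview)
Your treatment of the $\mathrm{LND}$ blocks and the $\mathrm{CVF}^{(i,j)}$ blocks is fine; the problem is in the middle block, and it is a genuine gap. Your main mechanism and your inductive step both rely on ``multiplying the resulting full $\mathrm{VF}^{(k'e-1,ke'-1)}$ by the invariant monomial $u^{\,i-(k'e-1)}v^{\,j-(ke'-1)}$'' (respectively multiplying a known $\mathrm{VF}^{(i-e,j)}$ by $u^e$). But $\mathrm{Lie}_{\mathrm{alg}}^{\Z_d}(\C^2)$ is only a Lie subalgebra of $\mathrm{VF}_{\mathrm{alg}}^{\Z_d}(\C^2)$, not a $\C[u,v]^{\Z_d}$-submodule: it is closed under brackets and linear combinations, not under multiplication by regular functions. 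So from $\mathrm{VF}^{(i-e,j)}\subset\mathrm{Lie}_{\mathrm{alg}}^{\Z_d}(\C^2)$ you cannot conclude $u^e\cdot\mathrm{VF}^{(i-e,j)}\subset\mathrm{Lie}_{\mathrm{alg}}^{\Z_d}(\C^2)$. Your route via Lemma~\ref{lembrackets}$(iv)$ only hits degrees of the special shape $(k'e-1,ke'-1)$, and without the (illegitimate) monomial multiplication you cannot propagate from there to a general $(i,j)\in I\setminus J$.

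The fix is the mechanism you mention only as a ``fallback'', namely Lemma~\ref{lembrackets}$(ii)$--$(iii)$, and the point is that it gives a \emph{one-step} argument with no induction at all. If $(i,j)\in I\setminus J$ and $(i,j)\neq(0,0)$, then by definition of $J$ either $i\ge e$ or $j\ge e'$. Say $i\ge e$; then $(i-e,j+1)\in I$ (the congruence $(i-e)+e(j+1)=i+ej\equiv 0$ is automatic), so $\mathrm{CVF}^{(i-e,j+1)}$ consists of complete invariant fields. Bracketing a nonzero $\partial\in\mathrm{CVF}^{(i-e,j+1)}$ with $\delta\in\mathrm{LND}_v^1$ lands, by Lemma~\ref{lembrackets}$(iii)$, directly in $\mathrm{VF}^{(i,j)}\setminus\mathrm{CVF}^{(i,j)}$, and together with $\mathrm{CVF}^{(i,j)}$ this spans all of $\mathrm{VF}^{(i,j)}$. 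The case $j\ge e'$ is symmetric using $\mathrm{LND}_u^1$ and $(iii)$'s mirror $(ii)$. The remaining point $(i,j)=(0,0)$ is immediate since every element of $\mathrm{VF}^{(0,0)}=\C\,u\partial_u\oplus\C\,v\partial_v$ is semisimple and hence already complete. This is exactly the paper's argument; note that neither item~$(iv)$ nor any descent/induction is needed.
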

\begin{proof}
  Take $(i,j)\in I \setminus J$, then either $(i-e,j+1) \in I$ or
  $(i+1,j-e')\in I$. In the first case pick $\partial\in
  \mathrm{CVF}^{(i-e,j+1)}$ and $\delta\in\mathrm{LND}_v^1$ and by Lemma
  \ref{lembrackets} we have $[\partial,\delta]\in\mathrm{VF}^{(i,j)}\setminus
  \mathrm{CVF}^{(i,j)}$ and thus $\mathrm{VF}^{(i,j)}\subset
  \mathrm{Lie}_{\mathrm{alg}}^{\Z_d}(\C^2)$. The second case works
  similarly.
\end{proof}
\begin{lemma}
  $\lbrace${invariant complete algebraic vector
    fields}$\rbrace \subset \mathrm{Lie}(S)$.
\end{lemma}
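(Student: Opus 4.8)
The plan is to reduce, via Theorem~\ref{thmlist}, to checking the finitely many explicit normal forms, and then to test each one against the homogeneous decomposition of $\mathrm{VF}_{\mathrm{alg}}^{\Z_d}(\C^2)$. The reduction requires first noting that $\mathrm{Lie}(S)$ is stable under conjugation by the equivariant automorphisms of $\C^2$ appearing in Theorem~\ref{thmlist}: such automorphisms are generated by the equivariant linear maps (diagonal, or in the sporadic case swap-and-scale), which multiply each graded summand $\mathrm{VF}^{(i,j)}$, $\mathrm{LND}_u^k$, $\mathrm{LND}_v^k$ by a scalar, and by the equivariant triangular maps $(u,v)\mapsto(u,v+r(u))$, $(u,v)\mapsto(u+s(v),v)$; each triangular map is a composition of time-one flows of invariant locally nilpotent vector fields $\delta\in S$, and conjugation by $\exp\delta$ acts on vector fields as the finite sum $\sum_k(-1)^k\mathrm{ad}_\delta^k/k!$ (finite since $\mathrm{ad}_\delta$ is locally nilpotent on polynomial vector fields when $\delta$ is), hence maps $\mathrm{Lie}(S)$ into itself because $\delta\in\mathrm{Lie}(S)$. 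So it suffices to place every field in the list of Theorem~\ref{thmlist} into $\mathrm{Lie}(S)$.

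For families~(1) and~(2)(a) no brackets are needed: expanding the field as a finite sum of monomial fields, the summands $au\partial_u$, $av\partial_v$, $A(u^d)v\partial_v$, $A(v^d)u\partial_u$ lie in $\mathrm{VF}^{(i,j)}$'s whose index $(i,j)$ either equals $(0,0)$ or has a coordinate $\ge d>\max(e,e')$, hence is not in $J$, so the whole $\mathrm{VF}^{(i,j)}\subseteq S$; the anti-diagonal summands $c_ku^{mk}v^{nk}(nu\partial_u-mv\partial_v)$ of~(2)(a) lie in $\mathrm{CVF}^{(mk,nk)}\subseteq S$ (or $\mathrm{VF}^{(0,0)}\subseteq S$ when $k=0$); and the summands of $B(u^e)\partial_v$ and $B(v^{e'})\partial_u$ lie in the $\mathrm{LND}$-summands of $S$. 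Thus~(1) and~(2)(a) already lie in $S$. For~(3),~(4) and the sporadic case~(2)(b) --- the fields preserving a $\C^*$-fibration $F$ whose special fibre is not a coordinate cross --- I would first use the divisibility hypothesis of Theorem~\ref{thmlist} to see that, once the apparent poles $p(u)u^{-l}$ cancel, the field is a genuine finite sum of monomial fields, and then argue component by component: the degree-$(0,0)$ component is in $\mathrm{VF}^{(0,0)}\subseteq S$; a component of multidegree $(i,j)\notin J$ is in $\mathrm{VF}^{(i,j)}\subseteq S$; and a component of multidegree $(i,j)\in J$ is either of anti-diagonal shape, hence in $\mathrm{CVF}^{(i,j)}\subseteq S$, or must be produced inside $\mathrm{Lie}(S)$ by one of the brackets of Lemma~\ref{lembrackets} --- a commutator $[\mathrm{LND}_u^k,\mathrm{LND}_v^{k'}]$ landing in $\mathrm{VF}^{(k'e-1,ke'-1)}$, or $[\mathrm{CVF},\mathrm{LND}]$ shifting multidegree by $(\mp1,\pm ke')$, etc. --- where the arithmetic conditions $[l+e]=0$, $[1+le]=0$, $[m]=0$ are exactly what makes the needed index available. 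Case~(2)(b) is reduced to this form by the linear change $u\mapsto u+v$, $v\mapsto u-v$, which turns $u^2-v^2$ into a monomial fibration and $u\partial_v+v\partial_u$ into a diagonal field; the one component of~(2)(b) whose multidegree falls in $J$ turns out to be anti-diagonal, hence in $\mathrm{CVF}\subseteq S$.

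The step I expect to be the main obstacle is the bookkeeping in~(3) and~(4). Because the polynomial $p$ makes the $\partial_v$-coefficient of the fibre-tangent field $V=nu\partial_u-((m+nl)v+(mp(u)+nup'(u))u^{-l})\partial_v$ non-monomial, multiplying by powers of $F=u^m(u^lv+p(u))^n$ and expanding produces many monomial fields, and one must verify that each of them either has multidegree outside $J$, or is of anti-diagonal shape, or is exactly compensated by the $\mathrm{CVF}$-part of the field; the hypotheses $\deg p<l$, $p(0)\neq0$, $p\in A_{[0]}$ and the stated divisibility of $A$ are precisely what make this finite check succeed. Granting it, and combining with the preceding lemma $S\subseteq\mathrm{Lie}_{\mathrm{alg}}^{\Z_d}(\C^2)$, one obtains $\mathrm{Lie}(S)=\mathrm{Lie}_{\mathrm{alg}}^{\Z_d}(\C^2)$.
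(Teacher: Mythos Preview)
Your overall architecture matches the paper's: both argue that $\mathrm{Lie}(S)$ is stable under equivariant automorphisms (via the decomposition into diagonal linear maps and time-one flows of invariant LNDs, the latter acting as finite $\mathrm{ad}$-sums), and then place each normal form of Theorem~\ref{thmlist} into $\mathrm{Lie}(S)$. Your treatment of families~(1) and~(2a) is the same as the paper's.

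The gap is in the remaining cases. The paper's argument for~(2b) and~(3) is a one-line degree estimate: it shows that for these fields every homogeneous component with multidegree in $J$ \emph{vanishes}. For~(3a) the point is simply that $[m]=[0]$ together with $m\ge 1$ forces $m\ge d$, and then every nonzero homogeneous piece (other than the LND and $(0,0)$ parts) has $u$-degree $i\ge m\ge d>e$, hence $(i,j)\notin J$; case~(3b) is symmetric, and~(2b) uses $e=e'=2d'+1$ together with $i+j\ge 4d'$, $i\neq j$. You do not isolate this estimate. Instead you allow for nonzero $J$-components that are not of $\mathrm{CVF}$-shape and propose to ``produce them inside $\mathrm{Lie}(S)$ by one of the brackets of Lemma~\ref{lembrackets}''. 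That mechanism cannot work: by Lemma~\ref{lembrackets}, $[\mathrm{CVF}^{(a,b)},\mathrm{LND}_u^k]\subset\mathrm{VF}^{(a-1,b+ke')}$ has second index $\ge e'$, $[\mathrm{CVF}^{(a,b)},\mathrm{LND}_v^{k'}]$ has first index $\ge e$, and $[\mathrm{LND}_u^k,\mathrm{LND}_v^{k'}]$ lands in $\mathrm{VF}^{(k'e-1,ke'-1)}$, which lies in $J$ only for $k=k'=1$. So brackets from $S$ reach at most the single multidegree $(e-1,e'-1)$ inside $J$ --- this is exactly the content of Theorem~\ref{thminvliealg}. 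Your ``finite check'' therefore cannot succeed by bracket-generation in general; it succeeds only because those components are actually zero, which is what you need to prove and is what the condition $[m]=[0]$ (not the conditions $[l+e]=0$, $[1+le]=0$ you emphasize) delivers.

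Two smaller points: there is no case~(4) in Theorem~\ref{thmlist}; and the linear change $u\mapsto u+v$, $v\mapsto u-v$ you invoke for~(2b) is not $\Z_d$-equivariant when $e=2d'+1>1$, so it does not respect the grading into $\mathrm{VF}^{(i,j)}$ and cannot be used to reduce~(2b) to the monomial case within the present framework. The paper handles~(2b) directly by the degree/parity count mentioned above.
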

\begin{proof}
  Let $L$ be the set of vector fields appearing in the list of
  Theorem~\ref{thmlist}. We will first show that $L\subset S$. Let
  $\partial \in L$ and $\partial = \sum \partial_{i,j}$ its
  decomposition into homogeneous parts with respect to the standard
  grading on $\C^2$.  We directly see that all homogeneous parts of
  vector fields (1) and (2a) are complete.  For the vector
  fields (2b) and (3) we claim that $\partial_{i,j}=0$ whenever
  $(i,j)\in J$. Indeed, assume that $\partial_{i,j}\neq 0$ with $(i,j)
  \neq (0,0)$ and $\partial_{i,j}$ is not an LND. Then in case (2b) we
  have $e=e'=2d'+1$, $i+j\geq 4d'$ and $i\neq j$ since for every
  monomial $\mathfrak{m}$ of the polynomial $A$ we have
  $\deg_u\mathfrak{m}-\deg_v\mathfrak{m}$ is a multiple of 4. Hence,
  either $i>e$ or $j>e'$.  In case (3a) under the same assumptions we
  have $i> m+nl-l\geq m\geq d> e$. Similarly, in case (3b) we have $j>
  m+ln -l\geq m\geq d> e'$.

  \medskip

  In order to conclude the proof we only need to show that for a
  vector field $\delta \in \mathrm{Lie}(S)$ and an equivariant
  automorphism $\phi$ the vector field $\phi_*\delta \in
  \mathrm{Lie}(S)$.  By Lemma 4.10 in \cite{ArZa} $\phi$ is a
  composition of equivariant Jonqui\`eres automorphisms or more
  precisely it is a composition of linear equivariant automorphisms
  and flow maps of the vector fields $u^{ke}\partial/\partial v$ and $
  v^{ke'} \partial/\partial u$ (which are contained in $S$). First we
  show that for any linear automorphism $\phi$ we have $\phi_*\delta
  \in \mathrm{Lie}(S)$. For $e=1$ this statement is true for obvious
  reasons, indeed here we already have
  $\mathrm{Lie}(S)=\mathrm{Lie}_{\mathrm{alg}}^{\Z_d}(\C^2)$. For $e
  \neq 1$ all equivariant linear automorphisms are of the form
  $(u,v)\mapsto (au,bv)$ so they act by homothety on homogeneous
  vector fields of $\mathrm{Lie}(S)$. Now, if $\phi^t$ is the flow of
  the LND $\partial$ then $\phi^t_*\delta \in$ Lie($\partial,\delta$)
  for all $t$, since the Taylor expansion of $\phi^t_*\delta$ gives
  $\phi^t_*\delta = \delta + t [\partial, \delta] + \frac{1}{2} t^2
  [\partial, [\partial, \delta]] + \ldots + \frac{1}{n !} t^n
  [\partial, \ldots [\partial, \delta]] \ldots]$ which is a finite sum
  since $\partial$ is an LND and hence its flow is algebraic in
  $t$. Since $\partial\in S$ the claim follows.
\end{proof}

\begin{proof}[Proof of Theorem \ref{thminvliealg}]
  It is left to show that $\mathrm{Lie}(S)=S$ if $e=e'$ and
  $\mathrm{Lie}(S)=S\oplus \langle \partial \rangle$ if $e\neq e'$ for
  any $\partial\in \mathrm{VF}^{(e-1,e'-1)}\setminus
  \mathrm{CVF}^{(e-1,e'-1)}$. Let $(i,j)\in J$, then we need to show
  that $\mathrm{VF}^{(i,j)}\nsubseteq\mathrm{Lie}(S)$ unless $e\neq
  e'$ and $(i,j)=(e-1,e'-1)$. Assume
  $\mathrm{VF}^{(i,j)}\subset\mathrm{Lie}(S)$, then Lemma
  \ref{lembrackets} implies the existence of
  $\partial\in\mathrm{VF}^{(i,j)}\setminus \mathrm{CVF}^{(i,j)}$ such
  that $\partial=[\partial_1,\partial_2]$ for some
  $\partial_1\in\mathrm{LND}_u^1$ and $\partial_2\in\mathrm{LND}_v^1$,
  $e\neq e'$ and $(i,j)=(e-1,e'-1)$.
\end{proof}

\section{Implications of the algebraic density property for the
  holomorphic automorphism group}

We start with the obvious holomorphic version of Definition~\ref{ADP}.
Let $X$ be a Stein space and let $\Xsing$ be the singular locus. We
also let $Y\subseteq X$ be closed analytic subvariety of $X$
containing $\Xsing$ and let $\II=I(Y)\subseteq \OO(X)$ be the ideal of
$Y$.  Let $\VFhol(X,Y)$ be the $\OO(X)$-module of holomorphic vector
fields vanishing in $Y$ i.e., $\VFhol(X,Y)=\{\partial
\mid \partial(\OO(X))\subseteq \II\}$. Let $\Liehol(X,Y)$ be the Lie
algebra generated by all the complete vector fields in $\VFhol(X,Y)$.

\begin{definition} %
  We say that $X$ has the strong density property (DP) relative to $Y$
  if $\Liehol(X,Y)$ is dense in $\VFhol(X,Y)$ in the compact-open
  topology. Furthermore, we say that $X$ has the DP relative to $Y$ if
  there exists $\ell\geq 0$ such that $\II^\ell\VFhol(X,Y)$ is
  contained in the closure of $\Liehol(X,Y)$. With this definition,
  the DP relative to $Y$ with $\ell=0$ is just the strong DP relative
  to $Y$.
\end{definition}

\begin{proposition} \label{GAGA}
  Let $X$ be an affine algebraic variety and let $Y$ be a subvariety
  containing $\Xsing$. Then the ADP for $X$ relative to $Y$ implies
  the DP for $X$ relative to $Y$.
\end{proposition}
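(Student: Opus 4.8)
The plan is to reduce the statement to a GAGA-type comparison between the algebraic and holomorphic modules of vector fields vanishing on $Y$, together with the fact that algebraic completeness implies holomorphic completeness. First I would recall that for a vector field $\partial$ on $X$ that is algebraically complete (its flow is given by an algebraic $\GA$- or $\GM$-action, or more generally extends to a flow defined for all complex time), the same flow provides a complete holomorphic vector field; hence every complete algebraic vector field in $\VFalg(X,Y)$ is in particular a complete holomorphic vector field in $\VFhol(X,Y)$, and therefore $\Liealg(X,Y) \subseteq \Liehol(X,Y)$ as Lie subalgebras of $\VFhol(X,Y)$.

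Next I would address the density step. Suppose $X$ has the ADP relative to $Y$, so there is $\ell \geq 0$ with $I^\ell \VFalg(X,Y) \subseteq \Liealg(X,Y)$. The key point is that the algebraic module $\VFalg(X,Y)$ is finitely generated over $\CC[X]$ (by Noetherianity, since it is a submodule of the finitely generated module of algebraic vector fields on the affine variety $X$), say by $\partial_1, \dots, \partial_r$; likewise a finite set of generators $a_1, \dots, a_s$ of $I$ in $\CC[X]$ gives generators $a_1^\ell, \dots$ — more precisely the products $a_{i_1}\cdots a_{i_\ell}$ — of $I^\ell$. Then every element of $I^\ell\VFalg(X,Y)$ is a $\CC[X]$-linear combination of the finitely many vector fields $(a_{i_1}\cdots a_{i_\ell})\partial_j$, all of which lie in $\Liealg(X,Y) \subseteq \Liehol(X,Y)$. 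I would then invoke the standard approximation fact from Andersén--Lempert / Forstneri\v{c}--Rosay theory (in the form used throughout this circle of ideas, and valid on Stein spaces): a holomorphic vector field on $X$ that can be written as $\sum_k h_k \eta_k$ with $h_k \in \OO(X)$ and $\eta_k$ in the Lie algebra generated by complete holomorphic vector fields lies in the closure of that Lie algebra, because each $h_k \eta_k$ is a limit of Lie combinations of complete fields (one approximates the coefficient $h_k$ uniformly on compacts by polynomials in functions killed by appropriate complete fields, or uses the $C_1$-condition argument). Applying this to $\sum_k h_k\big((a_{i_1}\cdots a_{i_\ell})\partial_j\big)$ shows $I^\ell\VFhol(X,Y)$ is contained in the closure of $\Liehol(X,Y)$, which is exactly the DP for $X$ relative to $Y$.

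The main obstacle I anticipate is making the second paragraph fully rigorous in the \emph{relative} setting on a possibly singular Stein space: one must know that the holomorphic analogue of the approximation mechanism respects the constraint of vanishing on $Y$, i.e. that the complete holomorphic fields used in the approximation can be taken in $\VFhol(X,Y)$ and that multiplication by $\OO(X)$-functions keeps one inside the closure of $\Liehol(X,Y)$. This is where one genuinely uses that the coefficients $h_k$ come paired with fields $(a_{i_1}\cdots a_{i_\ell})\partial_j$ that already vanish on $Y$ to high order, so that the "shearing" trick — writing $h\eta$ as a limit of $[\,\cdot\,,\cdot\,]$-combinations using that $\eta$ has a nontrivial kernel among functions — stays within $\VFhol(X,Y)$; I would cite the relative Andersén--Lempert theorem (Theorem~\ref{AL-Theorem} in this paper) or the corresponding statement in \cite{varolin01, Forst11} rather than reprove it. Everything else — finite generation of $\VFalg(X,Y)$, the inclusion of complete algebraic fields into complete holomorphic fields, and the bookkeeping with $I^\ell$ — is routine.
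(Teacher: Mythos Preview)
Your outline contains a genuine gap in the ``density step''. The claim you label as a standard approximation fact --- that for $h_k\in\OO(X)$ and $\eta_k\in\Liehol(X,Y)$ the vector field $\sum_k h_k\eta_k$ lies in the closure of $\Liehol(X,Y)$ --- is \emph{not} standard and is false in general: the closure of the Lie algebra generated by complete fields has no reason to be an $\OO(X)$-module, and neither the shearing trick nor the $C_1$-condition produces this conclusion for an arbitrary $\eta_k$ that is merely a Lie combination of complete fields (shearing needs $\eta_k$ itself complete and $h_k$ in its kernel). Citing Theorem~\ref{AL-Theorem} does not help either, since that result is about approximating isotopies by automorphisms, not about a module property of $\Liehol(X,Y)$.

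The paper's proof avoids this issue entirely, and the mechanism is worth internalizing. One does not try to multiply elements of $\Liehol(X,Y)$ by holomorphic functions. Instead one shows that $I^\ell\VFalg(X,Y)$ is dense in $\II^\ell\VFhol(X,Y)$: the finitely many algebraic generators $s_1,\dots,s_N$ of $I^\ell\VFalg(X,Y)$ generate the coherent sheaf $\II^\ell\VFhol(X,Y)$ at every point (Cartan's Theorem~A), so by Theorem~B any holomorphic section over an $\OO(X)$-convex compact is $\sum f_i s_i$ with $f_i$ holomorphic; approximating each $f_i$ by a polynomial $p_i\in\CC[X]$ yields $\sum p_i s_i\in I^\ell\VFalg(X,Y)$. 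Now the ADP hypothesis gives $\sum p_i s_i\in\Liealg(X,Y)\subseteq\Liehol(X,Y)$, and density follows. Note that the approximant lands back in the \emph{algebraic} module $I^\ell\VFalg(X,Y)$, where the ADP inclusion applies directly --- no module property of $\Liehol$ is ever needed. Your write-up also omits the Cartan~A/B step that guarantees a holomorphic section of $\II^\ell\VFhol(X,Y)$ is actually an $\OO$-combination of the algebraic generators; this is the honest ``GAGA-type'' content of the proposition.
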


\begin{proof}
  The proposition follows from the fact that $I^\ell\VFalg(X,Y)$ is
  dense in $\II^\ell\VFhol(X,Y)$. Indeed, by Theorem A of Cartan, there
  are finitely many global sections $s_1,\ldots, s_N$ of
  $I^\ell\VFalg(X,Y)$ that generate the stalk at every point. A
  standard application of Theorem B of Cartan implies that any
  holomorphic section $s_h\in\II^\ell\VFhol(X,Y)$ over an
  $\OO(X)$-convex compact $K\subseteq X$ can be written as
  $s_h=f_1s_1+\ldots+f_Ns_N$ with $f_i\in \OO(K)$. By approximating
  the functions $f_i$ by global functions in $\CC[X]$, this implies
  $I^\ell\VFalg(X,Y)$ is dense in $\II^\ell\VFhol(X,Y)$.
\end{proof}

\begin{theorem}[\bf Relative Anders\'en-Lempert
  theorem] \label{AL-Theorem} %
  Let $X$ be a Stein space with the DP relative to a
  closed analytic subvariety $Y$ containing $\Xsing$. Let $\Omega$ be
  an open subset of $X$. Suppose that $ \Phi : [0,1] \times \Omega \to
  X$ is a $C^1$-smooth map such that

  \begin{enumerate} [(i)]
  \item $\Phi_t : \Omega \to X$ is holomorphic and injective for every
    $ t\in [0,1]$,

  \item $\Phi_0 : \Omega \to X$ is the natural embedding of $\Omega$
    into $X$, 

  \item $\Phi_t (\Omega)$ is a Runge subset of $X$ for every $t\in
    [0,1]$, and

  \item \label{fixing} $\Phi_t (\Omega)$ fixes $Y$ up to order $\ell$, where $\ell$
    is such that $\II^\ell\VFhol(X,Y)$ is contained the closure of
    $\Liehol(X,Y)$.

    Then for each $\epsilon >0 $ and every compact subset $K \subset
    \Omega$ there is a continuous family, $\alpha: [0, 1] \to
    \Aut_{hol} (X)$ of holomorphic automorphisms of $X$ fixing $Y$
    pointwise such that $$\alpha_0 = id \, \, \, {\rm and} \, \,
    \,\vert \alpha_t - \Phi_t \vert_K <\epsilon {\rm \ \ for\ \ every\
      \ } t \in [0,1]$$ % \quad \forall \ t\in [0,1].$$
  \end{enumerate}
\end{theorem}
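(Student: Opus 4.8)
The plan is to prove the Relative Anders\'en-Lempert theorem by adapting the proof of the classical (absolute) Anders\'en-Lempert theorem, as presented for instance in \cite{Forst11}, keeping careful track of the vanishing condition along $Y$ throughout. The starting observation is that the hypothesis \eqref{fixing} together with the DP relative to $Y$ gives us exactly the class of vector fields we are allowed to use: time-dependent holomorphic vector fields $t\mapsto V_t$ lying in $\II^\ell\VFhol(X,Y)$, which by assumption can be approximated on compacts by elements of $\Liehol(X,Y)$, i.e.\ by finite sums of Lie brackets of \emph{complete} holomorphic vector fields vanishing on $Y$. The flow of each such complete field is a one-parameter group of holomorphic automorphisms of $X$ fixing $Y$ pointwise (since the field vanishes on $Y$, the variety $Y$ is pointwise invariant under the flow), and finite compositions of these, with time-dependent speeds, are again automorphisms fixing $Y$ pointwise. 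So the relative feature is automatically built into the approximating automorphisms once we only ever use vector fields from $\Liehol(X,Y)$.

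First I would reduce the isotopy $\Phi_t$ to the generation of a time-dependent vector field. Differentiating $\Phi_t$ in $t$ and composing with $\Phi_t^{-1}$ on the image produces a (a priori only defined on the shrinking Runge domains $\Phi_t(\Omega)$, and only $C^0$ in $t$) family of holomorphic vector fields $V_t$ on $\Phi_t(\Omega)$ whose (non-autonomous) flow from time $0$ recovers $\Phi_t$ on $K$; a standard smoothing-in-$t$ argument (piecewise-constant or piecewise-linear interpolation, Euler-type polygonal approximation of the non-autonomous flow) lets us work with finitely many autonomous steps, so it suffices to approximate the time-$\delta$ map of a single holomorphic vector field $W$ defined on a Runge domain $\Omega'\supseteq K'$. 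Here is where the vanishing order enters: because $\Phi_t$ fixes $Y$ up to order $\ell$, the generated field $V_t$, hence each $W$, lies in $\II^\ell\VFhol(X,Y)$ on its domain of definition. By Theorem B of Cartan (exactly as in Proposition~\ref{GAGA}) we can approximate $W$ uniformly on $K'$ by a \emph{globally defined} section of $\II^\ell\VFhol(X,Y)$ on all of $X$ — this is the Runge-type approximation step that replaces the domain-dependent field by a global one while preserving the vanishing on $Y$.

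Next, using the DP relative to $Y$, this global field in $\II^\ell\VFhol(X,Y)$ is approximated, uniformly on $K'$, by an element $\widetilde W\in\Liehol(X,Y)$, i.e.\ $\widetilde W=\sum_k c_k[\Theta_{k,1},[\ldots,\Theta_{k,r_k}]\ldots]$ with each $\Theta_{k,i}$ complete and vanishing on $Y$. The time-$\delta$ flow of a single Lie bracket $[\Theta_1,\Theta_2]$ is approximated, for small $\delta$, by the commutator word $\theta_2^{-\sqrt\delta}\circ\theta_1^{-\sqrt\delta}\circ\theta_2^{\sqrt\delta}\circ\theta_1^{\sqrt\delta}$ of the individual flows (the classical Euler/commutator formula, with the usual $O(\delta^{3/2})$ error and iteration for longer brackets and for sums of brackets), and all the flows $\theta_i^t$ are automorphisms of $X$ fixing $Y$ pointwise. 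Composing these building blocks over the finitely many time-steps yields a continuous path $\alpha:[0,1]\to\Aut_{hol}(X)$ with $\alpha_0=\mathrm{id}$, each $\alpha_t$ fixing $Y$ pointwise, and $|\alpha_t-\Phi_t|_K<\epsilon$. The main obstacle, and the point deserving the most care, is the bookkeeping of the two competing approximations against the shrinking Runge domains: one must choose the time-partition fine enough, and at each step enlarge the compact set to a slightly larger $\OO(X)$-convex compact inside the current Runge domain, so that the accumulated errors from (a) polygonal approximation of the non-autonomous flow, (b) Cartan-B globalization, (c) DP approximation by Lie elements, and (d) the commutator-word approximation of bracket flows, all stay within $\epsilon$ on $K$; this is the standard but delicate inductive estimate of Anders\'en-Lempert theory, and the only genuinely new ingredient here is checking at each stage that every automorphism produced lies in the subgroup fixing $Y$ pointwise, which follows because every vector field used lies in $\VFhol(X,Y)$ (or a power of $\II$ times it).
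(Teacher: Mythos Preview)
Your proposal is correct and follows essentially the same approach as the paper's own (explicitly labeled) sketch of proof: pass to the time-dependent generating vector field, discretize in time, use Cartan's Theorems A and B on the coherent sheaf $\II^\ell\VFhol(X,Y)$ to globalize over the Runge domains, invoke the DP relative to $Y$ to approximate by Lie combinations of complete fields in $\VFhol(X,Y)$, and conclude via Euler's method. Your write-up is in fact more detailed than the paper's, which omits the explicit commutator-word formula and the error bookkeeping you spell out.
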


Point $(iv)$ in the assumptions of the theorem means the following:
Consider the time dependent vector field
$V(x,t_0)=\left.\frac{d}{dt}\right|_{t=t_0}\Phi_t(\Phi_{t_0}^{-1}(x))$. The
isotopy $\Phi_t (\Omega)$ fixes $Y$ up to order $\ell$ if $V(x,t_0)$
is a section of $\II^\ell\VFhol(X,Y)$ over $\Phi_{t_0}(\Omega)$ for
all $t_0$.

\begin{proof}[Sketch of proof]
  The map $\Phi_{t_0}$ is the $t_0$-map of the time dependent vector
  field $V(x,t)$. It can be approximated by dividing the time interval
  into small pieces and integrating the time independent vector fields
  over each piece. By assumption, each of those time independent
  fields is a section in
  $\II^\ell\VFhol(X,Y)(\Phi_{t_0}(\Omega))$. Since the sheaf
  $\II^\ell\VFhol(X,Y)$ is coherent, a similar use of Theorem A and B
  of Cartan as in the proof of Proposition~\ref{GAGA} leads to the
  fact that these time independent vector fields in the Runge domain
  $\Phi_{t_0}(\Omega)$ can be approximated by global vector fields in
  $\II^\ell\VFhol(X,Y)$. By assumption, these vector fields can be
  approximated by Lie combinations of complete vector fields vanishing
  in $Y$ (not necessarily in $\II^\ell\VFhol(X,Y)$). Now the standard
  use of Euler's method gives the desired conclusion.
\end{proof}

\begin{remark}
  If $Y\cap \Phi_t(\Omega)=\emptyset$ for all $t\in [0,1]$, then
  condition~\eqref{fixing} in Theorem~\ref{AL-Theorem} is trivially
  satisfied.
\end{remark}

\begin{corollary}
  Any smooth point in an affine toric variety $X$ of dimension $n\geq
  2$ different from the torus has an open neighborhood in the
  Euclidean topology biholomorphic to $\CC^n$.
\end{corollary}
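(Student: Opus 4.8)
The plan is to realize a smooth point of $X$ as the attracting fixed point of a global holomorphic automorphism whose basin of attraction is biholomorphic to $\CC^n$, i.e.\ to produce a Fatou--Bieberbach domain in $X$ through it. First I would record the inputs. Since $X$ is an affine toric variety of dimension $n\ge 2$ different from the torus, its defining cone is nonzero, hence has a ray $\rho$, and the corresponding orbit $\OO(\rho)$ is smooth of codimension one; therefore $\Xreg\supsetneq\TT$, so $X\setminus\Xsing\neq\TT$ and Theorem~\ref{finalthm} gives the ADP for $X$ relative to $\Xsing$, which by Proposition~\ref{GAGA} upgrades to the DP relative to $\Xsing$. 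Moreover $X$ is Stein, being affine, and by Proposition~\ref{rel-hom} the group $\Aut(X,\Xsing)$ acts transitively on $\Xreg=X\setminus\Xsing$; since an automorphism of $X$ carries open neighbourhoods to open neighbourhoods, it suffices to produce a neighbourhood biholomorphic to $\CC^n$ of \emph{some} smooth point.

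Next I would build the automorphism. Fix a smooth point $x_0$ and choose global holomorphic functions on $X$ restricting to holomorphic coordinates $z=(z_1,\dots,z_n)$ centred at $x_0$; for $r$ small, $\Omega=\{\sum_i|z_i|^2\le r^2\}$ is then an $\OO(X)$-convex (hence Runge) neighbourhood of $x_0$ disjoint from $\Xsing$. Let $\Phi_t$, $t\in[0,1]$, be the flow of the contraction $V=-\sum_i z_i\,\partial/\partial z_i$ on $\Omega$, so that $\Phi_t(z)=e^{-t}z$ satisfies $\Phi_0=\mathrm{id}$, $\Phi_t(\Omega)\subseteq\Omega$ with each $\Phi_t(\Omega)$ Runge in $X$, and $\Phi_t(\Omega)\cap\Xsing=\emptyset$; in particular condition $(iv)$ of Theorem~\ref{AL-Theorem} holds trivially by the Remark following it. Applying Theorem~\ref{AL-Theorem} to the isotopy $(\Phi_t)$, with a slightly smaller compact set $K\subset\Omega$ and small $\epsilon>0$, yields a holomorphic automorphism $\Phi$ of $X$ fixing $\Xsing$ pointwise with $|\Phi-\Phi_1|_K<\epsilon$. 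For $\epsilon$ small enough $\Phi$ maps $K$ into itself and contracts it, so by a standard fixed-point estimate it has a unique attracting fixed point $p_0\in K$ with $D\Phi(p_0)$ as close as we wish to $D\Phi_1(x_0)=e^{-1}\mathrm{Id}$.

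Finally I would identify the basin. A sufficiently small perturbation $A$ of the scalar contraction $e^{-1}\mathrm{Id}$ has all eigenvalues in the punctured unit disc and is non-resonant, since $|\mu_i|\approx e^{-1}$ for all $i$ forces $\bigl|\prod_j\mu_j^{\alpha_j}\bigr|\approx e^{-\sum_j\alpha_j}<|\mu_i|$ whenever $\sum_j\alpha_j\ge 2$; hence by the Poincar\'e linearization theorem $\Phi$ is holomorphically conjugate near $p_0$ to $A=D\Phi(p_0)$. Consequently the basin $\Omega_\Phi=\{x\in X:\Phi^k(x)\to p_0\}$ is an open neighbourhood of $p_0$, and extending the linearizing chart $h$ by $x\mapsto A^{-k}\bigl(h(\Phi^k(x))\bigr)$ for $k$ large (the expression stabilizes in $k$) identifies $\Omega_\Phi$ biholomorphically with the basin of $0$ under the linear contraction $A$, which is all of $\CC^n$. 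Since $p_0$ is a smooth point and $\Aut(X,\Xsing)$ acts transitively on $\Xreg$, the same conclusion holds for every smooth point, which proves the corollary. The main obstacle is precisely this passage from a local contraction to a genuine \emph{global} automorphism of $X$ with controlled differential at a fixed point near $x_0$, together with the verification that its basin is $\CC^n$ rather than merely an increasing union of Runge balls: the first part is exactly what the relative Anders\'en--Lempert Theorem~\ref{AL-Theorem} delivers, while the second rests on the non-resonance (automatic here) and on the classical Rosay--Rudin Fatou--Bieberbach argument, which transfers unchanged to the Stein setting; the remaining Runge/$\OO(X)$-convexity bookkeeping and the fixed-point estimate are routine.
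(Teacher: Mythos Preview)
Your proposal is correct and follows essentially the same route as the paper's proof: take a Runge coordinate ball around a smooth point, run a linear contraction isotopy, approximate via the relative Anders\'en--Lempert theorem (Theorem~\ref{AL-Theorem}) to obtain a global automorphism with an attracting fixed point nearby, invoke the Rosay--Rudin result that its basin is biholomorphic to $\CC^n$, and finish by transitivity of $\Aut(X,\Xsing)$ on $\Xreg$. Your write-up is in fact more explicit than the paper at two points---you justify $X\setminus\Xsing\neq\TT$ via the smoothness of codimension-one orbits before invoking Theorem~\ref{finalthm}, and you spell out the non-resonance/linearization step behind the basin identification that the paper simply attributes to \cite{RoRu88}---but the strategy is the same.
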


\begin{proof}
  Let $x\in X$. Take a Runge neighborhood $U$ of $x$ biholomorphic to
  the unit ball sending $x$ to zero and let $\Phi_t$ be the map
  $\left(1-\frac{t}{2}\right)z$ in the unit ball. Since $X$ has the DP
  relative to $\Xsing$, Theorem~\ref{AL-Theorem} implies that these
  contractions can be approximated by holomorphic automorphisms
  $\alpha_t$ of $X$ (fixing $\Xsing$ pointwise). The automorphism
  $\alpha_1$ has an attractive fixed point near $x$. The bassin of
  attraction of this point is biholomorphic to $\CC^n$
  \cite{RoRu88}. Since the holomorphic automorphism group of $X$ is
  transitive on $X\setminus \Xsing$, the claim follows.
\end{proof}

\bibliographystyle{alpha} \bibliography{density}

\begin{thebibliography}{AZK12}

\bibitem[AL92]{AnLe92}
Erik Anders{\'e}n and L{\'a}szl{\'o} Lempert.
\newblock On the group of holomorphic automorphisms of {${\bf C}^n$}.
\newblock {\em Invent. Math.}, 110(2):371--388, 1992.

\bibitem[And90]{An90}
Erik Anders{\'e}n.
\newblock Volume-preserving automorphisms of {${\bf C}^n$}.
\newblock {\em Complex Variables Theory Appl.}, 14(1-4):223--235, 1990.

\bibitem[And00]{Andersen00}
Erik Anders{\'e}n.
\newblock Complete vector fields on {$({\bf C}^*)^n$}.
\newblock {\em Proc. Amer. Math. Soc.}, 128(4):1079--1085, 2000.

\bibitem[AZ13]{ArZa}
I.~{Arzhantsev} and M.~{Zaidenberg}.
\newblock {Acyclic curves and group actions on affine toric surfaces}.
\newblock In {\em Affine Algebraic Geometry}, pages 1--41. World Scientific
  Publ., 2013.

\bibitem[AZK12]{AKZ10}
I.~V. Arzhantsev, M.~G. Za{\u\i}denberg, and K.~G. Kuyumzhiyan.
\newblock Flag varieties, toric varieties, and suspensions: three examples of
  infinite transitivity.
\newblock {\em Mat. Sb.}, 203(7):3--30, 2012.

\bibitem[Bru04]{Br3}
Marco Brunella.
\newblock Complete polynomial vector fields on the complex plane.
\newblock {\em Topology}, 43(2):433--445, 2004.

\bibitem[CLS11]{CLS}
David~A. Cox, John~B. Little, and Henry~K. Schenck.
\newblock {\em Toric varieties}, volume 124 of {\em Graduate Studies in
  Mathematics}.
\newblock American Mathematical Society, Providence, RI, 2011.

\bibitem[Dem70]{Dem70}
Michel Demazure.
\newblock Sous-groupes alg{\'e}briques de rang maximum du groupe de {C}remona.
\newblock {\em Ann. Sci. {\'E}cole Norm. Sup. (4)}, 3:507--588, 1970.

\bibitem[FKZ08]{FlKaZa}
Hubert Flenner, Shulim Kaliman, and Mikhail Zaidenberg.
\newblock Uniqueness of {$\mathbb C^*$}- and {$\mathbb C_+$}-actions on
  {G}izatullin surfaces.
\newblock {\em Transform. Groups}, 13(2):305--354, 2008.

\bibitem[For11]{Forst11}
Franc Forstneri\v{c}.
\newblock {\em Stein manifolds and holomorphic mappings}, volume~56 of {\em
  Ergebnisse der Mathematik und ihrer Grenzgebiete. 3. Folge. A Series of
  Modern Surveys in Mathematics}.
\newblock Springer, Heidelberg, 2011.

\bibitem[For13]{For13}
Franc Forstneri\v{c}.
\newblock Oka manifolds: from {O}ka to {S}tein and back.
\newblock {\em Ann. Fac. Sci. Toulouse Math. (6)}, 22(4):747--809, 2013.
\newblock With an appendix by Finnur L{\'a}russon.

\bibitem[FR93]{FoRo93}
Franc Forstneri\v{c} and Jean-Pierre Rosay.
\newblock Approximation of biholomorphic mappings by automorphisms of {${\bf
  C}^n$}.
\newblock {\em Invent. Math.}, 112(2):323--349, 1993.

\bibitem[Ful93]{Fu93}
William Fulton.
\newblock {\em Introduction to toric varieties}, volume 131 of {\em Annals of
  Mathematics Studies}.
\newblock Princeton University Press, Princeton, NJ, 1993.
\newblock The William H. Roever Lectures in Geometry.

\bibitem[Har77]{Har77}
Robin Hartshorne.
\newblock {\em Algebraic geometry}.
\newblock Graduate Texts in Mathematics, No. 52. Springer-Verlag, New York,
  1977.

\bibitem[KK08]{KaKu08}
Shulim Kaliman and Frank Kutzschebauch.
\newblock Criteria for the density property of complex manifolds.
\newblock {\em Invent. Math.}, 172(1):71--87, 2008.

\bibitem[KK11]{KK11}
Shulim Kaliman and Frank Kutzschebauch.
\newblock On the present state of the {A}nders{\'e}n-{L}empert theory.
\newblock In {\em Affine algebraic geometry}, volume~54 of {\em CRM Proc.
  Lecture Notes}, pages 85--122. Amer. Math. Soc., Providence, RI, 2011.

\bibitem[KK12]{KaKu14}
Shulim Kaliman and Frank Kutzschebauch.
\newblock On algebraic volume density property, 2012.
\newblock arXiv:1201.4769 [math.AG].

\bibitem[Kut14]{Ku14}
Frank Kutzschebauch.
\newblock Flexibility properties in complex analysis and affine algebraic
  geometry.
\newblock In {\em Automorphisms in Birational and Affine Geometry}, pages
  387--405. Springer, 2014.

\bibitem[L{\'a}r11]{La11}
Finnur L{\'a}russon.
\newblock Smooth toric varieties are {O}ka, 2011.
\newblock arXiv:1107.3604 [math.AG].

\bibitem[Oda88]{Oda88}
Tadao Oda.
\newblock {\em Convex bodies and algebraic geometry. An introduction to the
  theory of toric varieties}, volume~15 of {\em Ergebnisse der Mathematik und
  ihrer Grenzgebiete (3)}.
\newblock Springer-Verlag, Berlin, 1988.

\bibitem[Ros99]{Ro99}
Jean-Pierre Rosay.
\newblock Automorphisms of {${\bf C}^n$}, a survey of {A}nders{\'e}n-{L}empert
  theory and applications.
\newblock In {\em Complex geometric analysis in {P}ohang (1997)}, volume 222 of
  {\em Contemp. Math.}, pages 131--145. Amer. Math. Soc., Providence, RI, 1999.

\bibitem[RR88]{RoRu88}
Jean-Pierre Rosay and Walter Rudin.
\newblock Holomorphic maps from {${\bf C}^n$} to {${\bf C}^n$}.
\newblock {\em Trans. Amer. Math. Soc.}, 310(1):47--86, 1988.

\bibitem[Suz77]{Su}
Masakazu Suzuki.
\newblock Sur les op{\'e}rations holomorphes du groupe additif complexe sur
  l'espace de deux variables complexes.
\newblock {\em Ann. Sci. {\'E}cole Norm. Sup. (4)}, 10(4):517--546, 1977.

\bibitem[Var01]{varolin01}
Dror Varolin.
\newblock The density property for complex manifolds and geometric structures.
\newblock {\em J. Geom. Anal.}, 11(1):135--160, 2001.

\end{thebibliography}

\end{document}